\numberwithin{equation}{section}
\def\3bar{{|\hspace{-.02in}|\hspace{-.02in}|}}
\def\E{{\mathcal{E}}}
\def\T{{\mathcal{T}}}
\def\dQ{{\mathbb{Q}}}
\def\b0{\boldsymbol{0}}
\def\sumT{\sum_{T\in\mathcal{T}_h}}     
\def\bn{{\mathbf{n}}}
\def\bb{{\mathbf{b}}}
\def\bd{{\mathbf{d}}}
\def\bc{{\mathbf{c}}}
\def\bf{{\mathbf{f}}}
\def\m{\max{\{k+1,4\}}}
 \newcommand{\WL}{\Delta _w}
 \newcommand{\Real}{\mathbb{R}}
 \newcommand{\norm}[1]{\left\Vert#1\right\Vert}
 \newcommand{\trb}[1]{|\!|\!|#1|\!|\!|}
 \newcommand{\G}[1]{\nabla  #1}
\newcommand{\OP}{\overline{\partial}}
\begin{document}

\title{A stabilizer free weak Galerkin method with implicit $\theta$-schemes for fourth order parabolic problems}

\author{
  Shanshan Gu
  \and
Qilong Zhai
}
\maketitle
\begin{abstract}
In this paper, we combine the stabilizer free weak Galerkin (SFWG) method and the implicit $\theta$-schemes in time for $\theta\in [\frac{1}{2},1]$ to solve the fourth-order parabolic problem. In particular, when $\theta =1$, the full-discrete scheme is first-order backward Euler and the scheme is second-order Crank Nicolson scheme if $\theta =\frac{1}{2}$. Next, we analyze the well-posedness of the schemes and deduce the optimal convergence orders of the error in the $H^2$ and $L^2$ norms. Finally, numerical examples confirm the theoretical results.
\end{abstract}

\begin{keywords}
  stabilizer free weak Galerkin finite element method, fourth order parabolic problems, implicit $\theta$-schemes.
\end{keywords}


\section{Introduction}

 In this paper, we consider the fourth order parabolic equation as follows:
Find $u$ such that
\begin{equation}\label{problem-eq}
\left\{
\begin{array}{rcl}
u_t + \Delta ^2 u ~~= &f,\qquad &x\in\Omega,~t\in J,\\
u ~~=&0,\qquad &x\in\partial\Omega,~t\in J,\\
\frac{\partial u}{\partial \bn} ~~=&0,\qquad &x\in\partial\Omega,~t\in J,\\
u(0,\cdot) ~~=&\psi,\qquad &x\in\Omega,
\end{array}
\right.
\end{equation}
where $\Omega \subset \Real^d$ $(d=2,3)$ is a polygon region and $J=(0,\overline{t}]$ with $\overline{t}>0$ is a time interval.

By the integration by parts, we can get the following variational formulation: seek $u(t,\cdot)\in H^1[0,\overline{t};H^2_0(\Omega)]$ such that $u(0,\cdot)=\psi$ and
\begin{align}\label{var-form}
  (u_t,v)+(\Delta u,\Delta v)=(f,v),\qquad \forall v\in H^2_0(\Omega),
\end{align}
where we use the standard definition of the Sobolev space for $H^s(\Omega)$. The inner products, norms and seminorms in $H^s(\Omega)$ $(s\geq 0)$ are respectively denoted by $(\cdot ,\cdot)_s$, $\norm{\cdot}_s$ and $|\cdot|_s$. When $s=0$, the subscript $s$ can be omitted. In particularly, we know $H^2_0(\Omega)=\{v\in H^2(\Omega):v|_{\partial \Omega}=\frac{\partial v}{\partial \bn}|_{\partial\Omega}=0\}$.

For fourth order parabolic equations, there are various methods to solve it, such as the conforming finite element methods\cite{MR1742748,MR3109557}, the mixed finite element methods\cite{li_optimal_2006,liu_coupling_2013,li_mixed_2005,liu_mixed_2018} and discontinuous Galerkin finite element methods\cite{MR4382455}.

Weak Galerkin (WG) finite element method, as a discontinuous Galerkin finite element method, is proposed in \cite{PossionWG} to solve the second order elliptic problem. The method introduces the definitions of weak functions and weak operators to build a new numerical formulation. Recently, this method has been developed rapidly. On the one hand, WG method has been applied to the Stokes equation \cite{StokesWG}, the biharmonic equation \cite{BiharmonicWG,BiharmonicWGReOrder}, the Brinkman equation \cite{BrinkmanWG} and so on. On the other hand, the method has been further improved. Modified weak Galerkin finite element method, using weak function $\{v_0,\{v_0\}\}$ instead of $\{v_0,v_b\}$, extends the WG method and has been used to solve the Poisson equation \cite{PossionMWG,hussain_study_2022}, the biharmonic equation \cite{BiharmonicMWG}, the Brinkman equation \cite{sun_modied_nodate}, the Stokes equation \cite{mu_modified_2015}, the parabolic equation \cite{gao_modified_2014}, etc. By raising the degree of the polynomial in the space of the range of the weak operator, stabilizer free weak Galerkin (SFWG) finite element method \cite{PossionSFWG} removes the stabilizer in the WG format and achieves the same error convergence orders as the WG method. At present, the SFWG method has been utilized to solve the second order elliptic problem \cite{PossionSFWG,ye_stabilizer_2021-1,ye_stabilizer_2021,ye_new_2021}, the Stokes equation \cite{StokesSFWG}, the biharmonic equation \cite{BiharmonicSFWG,zhu_stabilizer-free_2022}, the parabolic equation \cite{MR4217892}, etc. Combining the MWG method with the SFWG method, conforming discontinuous Galerkin (CDG) finite element method is proposed in \cite{PossionCDG1,PossionCDG2,ye_conforming_2021,ye_conforming_2019,ye_weak_2022}, which simply the numerical scheme and reduce the degree of freedom of the weak function space.

Like other equations, the fourth order parabolic equation is solved by WG method in \cite{chai_weak_2019}. The aim of this paper is to use the SFWG method to solve the fourth order parabolic equation. In addition, we use the implicit $\theta$-schemes in time for $\theta\in [\frac{1}{2},1]$ to build the full discrete numerical scheme, which is utilized to solve the second order parabolic problem \cite{qi_weak_2020} and can reach $O(\tau ^2)$ about the time step $\tau$ when $\theta =\frac{1}{2}$.

An outline of the paper is organized as follows. In Section 2, we present the SFWG schemes, including the semi-discrete numerical scheme and the full-discrete numerical scheme. In Section 3, we introduce the norms of the weak finite element space and analyze the well-posedness of the two formulations. Next, the error estimates are obtained in Section 4. In Section 5, we use numerical examples to verify the rationality of the order of theoretical convergence. In the final Section, we make some conclusions.
\section{A standard discretization of weak Galerkin scheme}
In this section, we define some notations and build the stabilizer free WG scheme for the fourth order parabolic equation.


Let $\T_h$ be a partition of the domain $\Omega$ which satisfies the regular assumptions in \cite{Wang2014a}, $\E_h$ be all edges in $\T_h$ and
$\E_h^0$ be all interior edges $\E_h\backslash \partial\Omega$. For
each element $T\in\T_h$, $h_T$ denotes the diameter of $T$ and
$h=\max_{T\in\T_h} h_T$ represents the mesh size.

Now, we define the set of normal directions on $\E_h$ as follows
\begin{eqnarray*}
  \mathcal{D} _h=\{\bn _e:\bn_e\text{ is unit and normal to }e,~e\in\E_h\}.
\end{eqnarray*}

Then, for a given integer $k\ge 2$, we can define the WG finite
element spaces
\begin{eqnarray*}
V_h=\{v=\{v_0,v_b,v_n\bn_e \}:v_0|_T\in P_k(T), v_b|_e\in P_{k}(e), v_n|_e\in P_{k-1}(e), T\in\T_h, e\in\E_h\},
\end{eqnarray*}
and
\begin{eqnarray*}
  V_h^0=\{v=\{v_0,v_b,v_n\bn_e\}\in V_h:v_b|_e=0, v_n|_e=0, e\subset \partial\Omega\}.
\end{eqnarray*}

For each weak function $v\in V_h+H^2(\Omega)$, we can define its weak laplace $\Delta_w v$ as follows.
\begin{definition}\cite{BiharmonicSFWG}
For each $v\in V_h+H^2(\Omega)$, $\Delta_w v|_T$ is the unique polynomial in
$P_{j}(T)$ satisfying
\begin{eqnarray}\label{def-wlaplace}
  (\Delta_w v,\varphi)_{T}=(v_0,\Delta\varphi)_{T}-\langle v_b,\nabla\varphi\cdot\bn\rangle_{\partial T}+
  \langle v_n\bn_e\cdot\bn,\varphi\rangle_{\partial T},\quad
  \forall \varphi\in P_{j}(T),
\end{eqnarray}
where $\bn$ denotes the outward unit normal vector.
\end{definition}

To facilitate the analysis, we introduce some projection operators. $Q_0$ denotes the $L^2$ projection from $L^2(T)$
onto $P_k(T)$, $Q_b$ denotes the $L^2$ projection from $L^2(e)$ onto
$P_{k}(e)$, $Q_n$ denotes the $L^2$ projection from $L^2(e)$ onto
$P_{k-1}(e)$ and $\dQ_h$ denotes the $L^2$ projection from
$L^2(T)$ onto $P_{j}(T)$. Further, we define $Q_h=\{Q_0,Q_b,Q_n\}$ to represent the projection from $H^2(\Omega)$ onto $V_h$, that is, for exact solution $u$,
\begin{align*}
  Q_hu=\{Q_0u,Q_bu,Q_n(\G u\cdot\bn_e)\}.
\end{align*}

Substitute the weak laplace operator for the strong laplace operator in (\ref{var-form}), and we build the following semi-discrete numerical scheme: find $u_h\in H^1(0,\overline{t};V_h^0)$ such that $u_h(0,\cdot)=Q_h\psi$ and
\begin{align}\label{semi-scheme}
  (u_{h,t},v_0)+(\WL u_h,\WL v)=(f,v_0),\qquad \forall v\in V_h^0.
\end{align}

Using the time step $\tau$ to divide the time interval $(0,\overline{t}]$ into subintervals and defining $t_n=n\tau$, we propose the full-discrete numerical scheme: seek $U^n\in V_h^0$, $(n=1,2,\cdots,N)$, such that $U^0=Q_h\psi$ and
\begin{align}\label{full-scheme}
  &(\overline\partial U^n,v_0)+(\WL (\theta U^n+(1-\theta)U^{n-1}),\WL v)\\ \nonumber
  =&(\theta f(t_n)+(1-\theta)f(t_{n-1}),v_0),\qquad \forall v\in V_h^0,
\end{align}
where $\theta\in [\frac{1}{2},1]$, $\overline{\partial}U^n:=\frac{U^n-U^{n-1}}{\tau}$ and $f(t_n):=f(t_n,x)$. When $\theta =1$, it is well-known backward Euler scheme, and the (\ref{full-scheme}) is Crank-Nicolson (CN) scheme if $\theta =\frac{1}{2}$.

\section{Well-posedness}

In this section, we shall introduce the norms of $V_h^0$ and verify the well-posedness of the semi-discrete scheme (\ref{semi-scheme}) and the full-discrete scheme (\ref{full-scheme}).

For any $v\in V_h+H^2(\Omega)$, we can define
\begin{align}
  \label{trb}\trb v ^2&=(\WL v,\WL v),\\
  \label{2h}\norm{v}^2_{2,h}&=\sumT (\norm{\Delta v_0}_T^2+h_T^{-3}\norm{v_0-v_b}^2_{\partial T}+h_T^{-1}\norm{(\nabla v_0-v_n\bn _e)\cdot\bn}^2_{\partial T}).
\end{align}

\begin{lemma}\label{lemma-norm-equ}
  \cite{BiharmonicSFWG}For any $v\in V_h$, there exists two positive constants such that
  \begin{align}
    C_1\norm{v}_{2,h}\leq\trb v\leq C_2\norm{v}_{2,h}.
  \end{align}
\end{lemma}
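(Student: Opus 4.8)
The plan is to establish the norm equivalence $C_1\norm{v}_{2,h}\le\trb v\le C_2\norm{v}_{2,h}$ in two directions by working element by element and exploiting the definition of $\WL v$ together with standard scaling and inverse inequalities on the shape-regular partition $\T_h$.

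For the upper bound $\trb v\le C_2\norm{v}_{2,h}$, I would start from the defining identity \eqref{def-wlaplace} and test it with $\varphi=\WL v|_T\in P_j(T)$, which gives $\norm{\WL v}_T^2=(v_0,\Delta\WL v)_T-\langle v_b,\nabla\WL v\cdot\bn\rangle_{\partial T}+\langle v_n\bn_e\cdot\bn,\WL v\rangle_{\partial T}$. To recover the $\norm{\Delta v_0}_T$ term rather than $\norm{v_0}_T$, I would first integrate by parts twice on the first term, $(v_0,\Delta\WL v)_T=(\Delta v_0,\WL v)_T+\langle v_0,\nabla\WL v\cdot\bn\rangle_{\partial T}-\langle\nabla v_0\cdot\bn,\WL v\rangle_{\partial T}$, and then regroup to obtain $\norm{\WL v}_T^2=(\Delta v_0,\WL v)_T+\langle v_0-v_b,\nabla\WL v\cdot\bn\rangle_{\partial T}-\langle(\nabla v_0-v_n\bn_e)\cdot\bn,\WL v\rangle_{\partial T}$. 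Now I would bound each term by Cauchy--Schwarz, using the trace inequality $\norm{\WL v}_{\partial T}\le Ch_T^{-1/2}\norm{\WL v}_T$ and the inverse inequality $\norm{\nabla\WL v\cdot\bn}_{\partial T}\le Ch_T^{-3/2}\norm{\WL v}_T$ on the polynomial $\WL v$; the weights $h_T^{-3}$ and $h_T^{-1}$ on the boundary terms in $\norm{v}_{2,h}^2$ are exactly what is needed so that, after dividing through by $\norm{\WL v}_T$, one gets $\norm{\WL v}_T\le C\norm{v}_{2,h,T}$. Summing over $T$ yields the upper estimate.

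For the lower bound $C_1\norm{v}_{2,h}\le\trb v$, the argument is the more delicate direction and is the main obstacle. Here I would argue that each of the three pieces of $\norm{v}_{2,h,T}^2$ is controlled by $\norm{\WL v}_T$. The natural route is to choose test functions $\varphi\in P_j(T)$ in \eqref{def-wlaplace} that isolate $\Delta v_0$, $v_0-v_b$ on $\partial T$, and $(\nabla v_0-v_n\bn_e)\cdot\bn$ on $\partial T$ respectively; this is where the hypothesis that $j$ is chosen large enough (the degree-raising that defines the stabilizer-free method) is essential, since one needs $P_j(T)$ rich enough that, e.g., $\Delta v_0\in P_j(T)$ and suitable boundary-supported polynomials are available. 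Concretely, taking $\varphi$ with $\Delta\varphi$ matching $v_0$ after integration by parts, or invoking the bubble-function/norm-equivalence machinery on the finite-dimensional space $P_k(T)\times P_k(e)\times P_{k-1}(e)$, one shows $\norm{v}_{2,h,T}\le C\norm{\WL v}_T$ with $C$ depending only on shape-regularity via a scaling argument to the reference element. Since this is precisely Lemma~3.x of \cite{BiharmonicSFWG}, I would either reproduce that argument or simply cite it; the key point to verify is that the fourth-order weak Laplacian setting here matches the hypotheses there.

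Finally, I would remark that both constants $C_1,C_2$ are independent of $h$ because every inequality used (trace, inverse, polynomial norm equivalence) scales correctly under the affine map to the reference element and the partition is shape-regular; summation over $T\in\T_h$ then preserves the element-wise bounds.
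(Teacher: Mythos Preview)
Your proposal is reasonable and follows the standard argument, but note that the paper does not actually prove this lemma: it is stated with a citation to \cite{BiharmonicSFWG} and no proof is given in the text. So there is nothing to compare against beyond the cited reference.

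That said, your sketch is essentially the route taken in \cite{BiharmonicSFWG}. The upper bound via integration by parts in \eqref{def-wlaplace} with $\varphi=\WL v$, followed by trace and inverse inequalities on $P_j(T)$, is exactly how it is done there. For the lower bound, you correctly identify that the sufficiently large degree $j$ is the crux; the actual argument in \cite{BiharmonicSFWG} proceeds by constructing, for each edge $e\subset\partial T$, lifting operators (edge bubble functions raised to suitable powers times the trace quantity) that sit in $P_j(T)$ and allow one to isolate $h_T^{-3}\norm{v_0-v_b}_e^2$ and $h_T^{-1}\norm{(\nabla v_0-v_n\bn_e)\cdot\bn}_e^2$ from $\norm{\WL v}_T$. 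Your description of this step is a bit vague (``bubble-function/norm-equivalence machinery''), but you have the right idea and you yourself note that citing \cite{BiharmonicSFWG} is an option---which is precisely what the paper does.
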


\begin{lemma}\label{lemma-2h-norm}
  $\norm{\cdot}_{2,h}$ is a norm in $V_h^0$.
\end{lemma}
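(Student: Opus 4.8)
The plan is to check the norm axioms directly from the definition (\ref{2h}). Non-negativity and positive homogeneity are immediate, since every summand in $\norm{v}_{2,h}^2$ is a nonnegative multiple of the square of an $L^2(T)$- or $L^2(\partial T)$-norm of a quantity that depends linearly on $v$; the triangle inequality follows by applying Minkowski's inequality to each term and then to the finite $\ell^2$-sum. Hence $\norm{\cdot}_{2,h}$ is automatically a seminorm on $V_h+H^2(\Omega)$, and the only substantive point is positive-definiteness on $V_h^0$: I must show that if $v=\{v_0,v_b,v_n\bn_e\}\in V_h^0$ satisfies $\norm{v}_{2,h}=0$, then $v=0$.

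So suppose $\norm{v}_{2,h}=0$. Reading off the three groups of terms in (\ref{2h}) (whose weights $h_T^{-3}$, $h_T^{-1}$ are positive), I get for every $T\in\T_h$ that $\Delta v_0=0$ on $T$, that $v_0=v_b$ on $\partial T$, and that $(\G v_0-v_n\bn_e)\cdot\bn=0$ on $\partial T$, where $\bn$ is the outward unit normal of $T$. The next step is to upgrade $v_0$ to a globally $H^2_0$ function. On an interior edge $e\in\E_h^0$ shared by $T_1,T_2$, both one-sided traces of $v_0$ equal $v_b|_e$, so $v_0$ is continuous across $e$, and therefore so is its tangential derivative along $e$. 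For the normal derivative, the relation $\G v_0\cdot\bn=v_n(\bn_e\cdot\bn)$ on each $\partial T_i$ together with $\bn|_{\partial T_1}=-\bn|_{\partial T_2}$ on $e$ shows that $\G v_0$ has matching one-sided normal traces on $e$. Thus $v_0$ and the full gradient $\G v_0$ are single-valued across every interior edge, so the piecewise polynomial $v_0$ lies in $H^2(\Omega)$; and since $v\in V_h^0$ forces $v_b=0$ and $v_n=0$ on $\partial\Omega$, we also get $v_0=0$ and $\G v_0\cdot\bn=0$ on $\partial\Omega$, i.e. $v_0\in H^2_0(\Omega)$.

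Finally, since $v_0\in H^2(\Omega)$ its distributional Laplacian agrees with the elementwise one, which vanishes, so $\Delta v_0=0$ in $\Omega$; integrating by parts and using $v_0\in H^1_0(\Omega)$ gives $\norm{\G v_0}^2=-(\Delta v_0,v_0)=0$, whence $v_0$ is constant and, by the boundary condition, $v_0\equiv 0$. Substituting back, $v_b=v_0|_{\partial T}=0$ on every edge, and $v_n(\bn_e\cdot\bn)=\G v_0\cdot\bn=0$ on $\partial T$ with $\bn_e\cdot\bn=\pm1$ forces $v_n=0$ on every edge; therefore $v=0$. I expect the $H^2$-conformity step to be the only delicate point: one must verify continuity of \emph{both} the tangential and the normal derivatives of $v_0$ across interior edges — in particular keeping track of the sign of $\bn_e\cdot\bn$ on the two sides of an edge — because $H^2$-conformity of a piecewise polynomial requires the whole gradient, not merely its normal trace, to be single-valued.
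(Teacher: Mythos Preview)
Your proof is correct. The only substantive difference from the paper's argument is how you pass from the three local identities $\Delta v_0=0$, $v_0=v_b$, $(\G v_0-v_n\bn_e)\cdot\bn=0$ to the conclusion $\G v_0\equiv 0$. You go through a global route: first upgrade $v_0$ to an $H^2_0(\Omega)$ function by checking $C^1$-conformity across interior edges (both tangential and normal derivatives), then use one global integration by parts. The paper stays entirely discrete: it integrates by parts on each element,
\[
\sumT\norm{\G v_0}_T^2=\sumT\langle\G v_0\cdot\bn,v_0\rangle_{\partial T},
\]
and cancels the inter-element boundary contributions directly using $v_0|_{\partial T}=v_b$ and $\G v_0\cdot\bn_e=v_n$ (both single-valued on each edge), together with $v_n=0$ on $\partial\Omega$.

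What each approach buys: the paper's route is shorter and sidesteps exactly the ``delicate point'' you flag---it never needs tangential-derivative continuity, only that $v_0$ and the normal trace $\G v_0\cdot\bn_e$ match $v_b,v_n$ on each edge, which is immediate. Your route costs that extra verification but yields the conceptually clean intermediate statement that $v_0$ is a genuine harmonic function in $H^2_0(\Omega)$, from which $v_0=0$ is transparent. Either argument is fine here.
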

\begin{proof}
  It is enough to show the positive property of $\norm{\cdot}_{2,h}$. Suppose $v\in V_h^0$ and $\norm{v}_{2,h}=0$, from the definition of $\norm{\cdot}_{2,h}$, we have
  \begin{align*}
    \Delta v_0|_{T}=0,~ (v_0-v_b)|_{\partial T}=0,~ (\nabla v_0-v_n\bn _e)\cdot\bn|_{\partial T}=0,\qquad\forall T\in\T_h.
  \end{align*}

  Due to
  \begin{align*}
    (\nabla v_0-v_n\bn _e)\cdot\bn = \pm (\nabla v_0\cdot \bn_e -v_n),
  \end{align*}
  we know ($\nabla v_0\cdot \bn_e -v_n)|_{\partial T}=0,~\forall T\in\T_h$. It implies $\nabla v_0\cdot\bn_e = v_n$ on any edge.

  We claim that $\nabla v_0=0,~\forall T\in \T_h$ holds true. For the purpose, we use the Gauss formula to get
  \begin{align*}
    \norm{\nabla v_0}^2_T=(\nabla v_0,\nabla v_0)_T=-(\Delta v_0,v_0)_T +\langle \nabla v_0\cdot\bn,v_0 \rangle _{\partial T}=\langle \nabla v_0\cdot\bn,v_0 \rangle _{\partial T}.
  \end{align*}

  To sum over all $T$, we have
  \begin{align*}
    \sumT \norm{\nabla v_0}^2_T=\sumT\langle \nabla v_0\cdot\bn,v_0 \rangle _{\partial T}
  \end{align*}

  For $e\in\E_h^0$, let $T_1$ and $T_2$ be two elements sharing $e$, $v_0^1,v_0^2$ be the values of $v$ on $T_1$ and $T_2$, and $\bn_1,\bn_2$ be the unit outward normal vectors of $T_1,T_2$ on $e$.
  \begin{align*}
    \langle \nabla v_0^1\cdot\bn_1,v_0^1 \rangle _e+\langle \nabla v_0^2\cdot\bn_2,v_0^2 \rangle _e&=\pm \langle v_n,v_0^1-v_0^2 \rangle _e\\
    &=\pm \langle v_n,v_b-v_b \rangle _e\\
    &=0
  \end{align*}

  Combining $\nabla v_0\cdot\bn_e=v_n=0$ on any boundary edge, we obtain
  \begin{align}
    \sumT \norm{\nabla v_0}^2_T=0,
  \end{align}
  which implies $\nabla v_0=0$ on any element $T$. Using $v_0|_e=v_b$, $\forall e\in \E_h$ and $v\in V_h^0$, we have $v=0$ on $\Omega$.
\end{proof}

From Lemma \ref{lemma-norm-equ} and \ref{lemma-2h-norm}, we find $\trb\cdot$ is also the norm in $V_h^0$.

\begin{lemma}\label{lemma-boundness}
  There exists two positive constants which make the following inequalities hold true for any $w,v\in V_h$:
  \begin{align}
    |a(w,v)|&\leq\trb w\trb v,\\
    \trb v ^2&\leq a(v,v).
  \end{align}
\end{lemma}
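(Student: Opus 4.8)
The plan is to read both inequalities off directly from the definition of the bilinear form and of the triple-bar norm, so the proof is essentially a one-line application of the Cauchy--Schwarz inequality together with \eqref{trb}. Since the stabilizer-free scheme \eqref{semi-scheme} contains no penalty term, the relevant form here is $a(w,v)=(\Delta_w w,\Delta_w v)=\sum_{T\in\T_h}(\Delta_w w,\Delta_w v)_T$, where $\Delta_w$ is the weak Laplacian of \eqref{def-wlaplace}, and by \eqref{trb} we have $\trb{v}^2=(\Delta_w v,\Delta_w v)$.

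For the boundedness estimate, I would apply the Cauchy--Schwarz inequality to the $L^2(T)$ inner product of the polynomials $\Delta_w w|_T$ and $\Delta_w v|_T$ on each element, then sum over $T\in\T_h$ and use the discrete Cauchy--Schwarz inequality on the resulting sum of products, which gives
\[
|a(w,v)|\le\Big(\sum_{T\in\T_h}\norm{\Delta_w w}_T^2\Big)^{1/2}\Big(\sum_{T\in\T_h}\norm{\Delta_w v}_T^2\Big)^{1/2}=\trb{w}\,\trb{v};
\]
equivalently one may simply invoke Cauchy--Schwarz for the global $L^2(\Omega)$ inner product. For the coercivity estimate, \eqref{trb} yields $a(v,v)=(\Delta_w v,\Delta_w v)=\trb{v}^2$, so in fact equality holds and the claimed bound $\trb{v}^2\le a(v,v)$ is immediate; both constants in the statement may be taken equal to $1$.

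There is thus essentially no obstacle in the argument itself. The only point worth emphasizing is that this is precisely where the stabilizer-free construction pays off: because $\Delta_w$ is taken with range $P_j(T)$ for $j$ sufficiently large, the functional $\trb{\cdot}$ is a genuine norm on $V_h^0$ (by Lemma \ref{lemma-norm-equ} combined with Lemma \ref{lemma-2h-norm}), so no additional jump terms are needed to control the discrete energy. This lemma then functions as the bookkeeping step recording boundedness and coercivity of $a(\cdot,\cdot)$, which feed into the well-posedness proofs for \eqref{semi-scheme} and \eqref{full-scheme} and into the later error analysis.
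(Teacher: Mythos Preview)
Your proof is correct; the paper in fact states Lemma~\ref{lemma-boundness} without proof, and your argument---Cauchy--Schwarz on $(\Delta_w w,\Delta_w v)$ together with the definition \eqref{trb}---is exactly the direct verification the authors omit. The constants are indeed both $1$, and your remark that $a(w,v)=(\Delta_w w,\Delta_w v)$ (as confirmed by the later occurrences of $a(e_h,e_h)$ in the error analysis) resolves the one ambiguity in the statement.
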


\begin{theorem}
  For any $h\in (0,\overline{h}]$, there exists a solution $u_h(t)$ such that the semi-discrete numerical scheme (\ref{semi-scheme}).
\end{theorem}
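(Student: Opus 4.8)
The plan is to recast the semi-discrete scheme \eqref{semi-scheme} as a system of linear ordinary differential equations in the (finite-dimensional) space $V_h^0$ and invoke the classical Picard--Lindel\"of existence theory. First I would fix a basis $\{\phi_i\}_{i=1}^{M}$ of $V_h^0$ and write $u_h(t)=\sum_{i=1}^M \alpha_i(t)\phi_i$, so that \eqref{semi-scheme} becomes $\mathbf{B}\,\boldsymbol{\alpha}'(t)+\mathbf{A}\,\boldsymbol{\alpha}(t)=\mathbf{F}(t)$, where $\mathbf{B}_{ij}=(\phi_{i,0},\phi_{j,0})$ is the mass matrix, $\mathbf{A}_{ij}=(\WL\phi_i,\WL\phi_j)=a(\phi_i,\phi_j)$ is the stiffness matrix, and $\mathbf{F}_i(t)=(f(t),\phi_{i,0})$.

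The key step is to show that $\mathbf{B}$ is invertible, so that the system can be written in the standard explicit form $\boldsymbol{\alpha}'(t)=\mathbf{B}^{-1}(\mathbf{F}(t)-\mathbf{A}\boldsymbol{\alpha}(t))$. Symmetry and positive semi-definiteness of $\mathbf{B}$ are immediate; for definiteness I would argue that $\boldsymbol{\xi}^{\top}\mathbf{B}\boldsymbol{\xi}=\|v_0\|^2$ for $v=\sum_i\xi_i\phi_i\in V_h^0$, and if this vanishes then $v_0\equiv 0$ on every $T$; since $v\in V_h^0$ forces $v_b=v_0|_e$ on interior edges and $v_b=v_n=0$ on $\partial\Omega$, together with $\nabla v_0\cdot\bn_e=v_n$ (the trace relations exploited in the proof of Lemma \ref{lemma-2h-norm}), one gets $v_b\equiv 0$ and $v_n\equiv 0$, hence $v=0$ and $\boldsymbol{\xi}=0$. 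Alternatively, and more cleanly, one can note that the $L^2$-type bilinear form $(v_0,v_0)$ need not itself be a norm on $V_h^0$; instead I would invoke that $v\mapsto v_0$ is injective on $V_h^0$ because $\trb\cdot$ (equivalently $\|\cdot\|_{2,h}$, by Lemmas \ref{lemma-norm-equ} and \ref{lemma-2h-norm}) is a norm and $v_0\equiv0$ together with the trace relations forces $v=0$; this gives positive-definiteness of $\mathbf{B}$.

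Once $\mathbf{B}$ is invertible, the right-hand side $\boldsymbol{\alpha}\mapsto \mathbf{B}^{-1}(\mathbf{F}(t)-\mathbf{A}\boldsymbol{\alpha})$ is affine (hence globally Lipschitz) in $\boldsymbol{\alpha}$ and continuous in $t$ (assuming $f\in C(0,\overline t;L^2(\Omega))$, or at least $\mathbf{F}\in L^1_{loc}$), so the Cauchy--Lipschitz theorem yields a unique absolutely continuous solution $\boldsymbol{\alpha}$ on all of $[0,\overline t]$ with the prescribed initial value $\boldsymbol{\alpha}(0)$ determined by $u_h(0,\cdot)=Q_h\psi$. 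Translating back, this produces $u_h\in H^1(0,\overline t;V_h^0)$ solving \eqref{semi-scheme}, which is the assertion. I expect the only genuine obstacle to be the invertibility of $\mathbf{B}$, i.e.\ the injectivity of $v\mapsto v_0$ on $V_h^0$; this is not entirely trivial because the mass form only sees the interior component $v_0$, but it is handled exactly by the trace identities already established in Lemma \ref{lemma-2h-norm}. Everything else is the standard reduction of a finite element semi-discretization to a linear ODE system.
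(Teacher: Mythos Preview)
Your reduction has a genuine gap at the invertibility step. The mass matrix $\mathbf{B}$ with entries $(\phi_{i,0},\phi_{j,0})$ is \emph{singular}, because the map $v\mapsto v_0$ is \emph{not} injective on $V_h^0$. In the weak Galerkin space the three components $v_0,\,v_b,\,v_n$ are independent degrees of freedom; the definition of $V_h^0$ only constrains $v_b$ and $v_n$ on $\partial\Omega$. Thus any $v=\{0,v_b,0\}$ with $v_b$ supported on an interior edge lies in $V_h^0$, has $v_0\equiv 0$, and gives $\boldsymbol{\xi}^\top\mathbf{B}\boldsymbol{\xi}=0$ with $\boldsymbol{\xi}\neq 0$. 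Your appeal to the ``trace relations $v_b=v_0|_e$, $\nabla v_0\cdot\bn_e=v_n$'' is mistaken: those identities are \emph{conclusions} in Lemma~\ref{lemma-2h-norm} drawn from the hypothesis $\|v\|_{2,h}=0$; they are not built into the space $V_h^0$.

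What you actually have is a differential--algebraic system, not an ODE. The paper's proof confronts this directly: it chooses a basis split into the three blocks $\{\varphi_i,0,0\}$, $\{0,\psi_i,0\}$, $\{0,0,\varsigma_i\}$, observes that the time derivative appears only in the first block (the interior mass matrix $C$ over $\{\varphi_i\}$ \emph{is} SPD), and that testing with the second and third blocks yields purely algebraic constraints. It then shows the sub-stiffness block $\begin{pmatrix}A_{bb}&A_{bg}\\A_{gb}&A_{gg}\end{pmatrix}$ is SPD (this is where the fact that $\trb{\cdot}$ is a norm on $V_h^0$ enters), solves the algebraic equations for $(\bd(t),\bc(t))$ in terms of $\bb(t)$, substitutes back, and obtains a genuine linear ODE $\bb'(t)=C^{-1}(\cdots)$ to which Picard--Lindel\"of applies. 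To repair your argument you must perform exactly this elimination of the boundary/normal unknowns; without it, the system cannot be put in explicit form and the ODE existence theorem does not apply.
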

\begin{proof}
  Since the finite dimensional space $V_h^0$ is the set of weak functions composed of three parts, we suppose the basis of $V_h^0$ is $\{\varphi _1,0,0\}$,$\{\varphi _2,0,0\}$,$\cdots$,$\{\varphi _M,0,0\}$,$\{0,\psi _1,0\}$,$\{0,\psi _2,0\}$,$\cdots$,$\{0,\psi _N,0\}$,$\{0,0,\varsigma _1\}$,$\{0,0,\varsigma _2\}$,\\$\cdots$,$\{0,0,\varsigma _P\}$ and $u_h(t)=\{\sum _{i=1}^Mb_i(t)\varphi _i,\sum _{i=1}^Nd_i(t)\psi _i,\sum _{i=1}^Pc_i(t)\varsigma _i\}$, where the dimension of $V_h^0$ is $M+N+P$. From the semi-discrete formulation (\ref{semi-scheme}), we have
  \begin{align*}
    \sum _{j=1}^M b_j^\prime  (t)(\varphi _j(x),\varphi _i(x))+\Big(\WL\{\sum _{j=1}^Mb_j(t)\varphi _j,\sum _{j=1}^Nd_j(t)\psi _j,\sum _{j=1}^Pc_j(t)\varsigma _j\},\WL\{\varphi _i,0,0\}\Big)&=(f,\varphi _i),&\ i=1,2,\cdots,M,\\
    \Big(\WL\{\sum _{j=1}^Mb_j(t)\varphi _j,\sum _{j=1}^Nd_j(t)\psi _j,\sum _{j=1}^Pc_j(t)\varsigma _j\},\WL\{0,\psi _i,0\}\Big)&=0,&\ i=1,2,\cdots,N,\\
    \Big(\WL\{\sum _{j=1}^Mb_j(t)\varphi _j,\sum _{j=1}^Nd_j(t)\psi _j,\sum _{j=1}^Pc_j(t)\varsigma _j\},\WL\{0,0,\varsigma _i\}\Big)&=0,&\ i=1,2,\cdots,P.
  \end{align*}
  Let $\bb(t)=[b_1(t),b_2(t),\cdots,b_M(t)]^T$, $\bd(t)=[d_1(t),d_2(t),\cdots,d_N(t)]^T$, $\bc(t)=[c_1(t),c_2(t),\cdots,c_P(t)]^T$, $Q_h\psi =\{\sum _{j=1}^M\hat b_j\varphi _j,\sum _{j=1}^N\hat d_j\psi _j,\sum _{j=1}^P\hat c_j\varsigma _j\}$, $\hat{\bb}=[\hat b _1,\hat b _2,\cdots,\hat b _M]^T$, $\hat{\bd}=[\hat d_1,\hat d_2,\cdots,\hat d_N]^T$, $\hat{\bc}=[\hat c_1,\hat c_2,\cdots, \hat c_P]^T$ and
  \begin{align*}
      C=\left[
      \begin{array}{ccc}
        (\varphi _1,\varphi _1) & \cdots & (\varphi _M,\varphi _1)\\
        \vdots & \ddots & \vdots \\
        (\varphi _1,\varphi _M) & \cdots & (\varphi _M,\varphi _M)
      \end{array}
    \right],
    A=\left[
      \begin{array}{ccc}
        A_{00} & A_{0b} & A_{0g}\\
        A_{b0} & A_{bb} & A_{bg}\\
        A_{g0} & A_{gb} & A_{gg}
      \end{array}
    \right],
    \bf(t)=\left[
      \begin{array}{c}
        (f,\varphi _1)\\
        \vdots \\
        (f,\varphi _M)
      \end{array}
    \right],
  \end{align*}
  where
  \begin{align*}
    A_{00}&=\left[(\WL \{\varphi _j,0,0\},\WL\{\varphi _i,0,0\})_{\T_h}\right]_{i,j=1}^M,\\
    A_{0b}&=\left[(\WL \{0,\psi _j,0\},\WL\{\varphi _i,0,0\})_{\T_h}\right]_{j=1,i=1}^{N,M},\\
    A_{0g}&=\left[(\WL \{0,0,\varsigma _j\},\WL\{\varphi _i,0,0\})_{\T_h}\right]_{j=1,i=1}^{P,M},\\
    A_{bb}&=\left[(\WL \{0,\psi _j,0\},\WL\{0,\psi _i,0\})_{\T_h}\right]_{j,i=1}^{N},\\
    A_{bg}&=\left[(\WL \{0,0,\varsigma _j\},\WL\{0,\psi _i,0\})_{\T_h}\right]_{j=1,i=1}^{P,N},\\
    A_{gg}&=\left[(\WL \{0,0,\varsigma _j\},\WL\{0,0,\varsigma _i\})_{\T_h}\right]_{j,i=1}^{P},\\
    A_{b0}&=A_{0b}^T,\\
    A_{g0}&=A_{0g}^T,\\
    A_{gb}&=A_{bg}^T.
  \end{align*}
  
  Then, finding the solution of semi-discrete numerical scheme translates to solving the following problem: finding $\bb(t)$, $\bd(t)$, $\bc(t)$ such that $\bb(0)=\hat\bb$, $\bd(0)=\hat\bd$, $\bc(0)=\hat\bc$ and
  \begin{align}\label{problem-mat}
    \left\{
      \begin{array}{rcl}
        C\bb ^\prime (t)+A_{00}\bb (t)+\left[A_{0b}\ A_{0g}\right]\left[\begin{array}{c}
          \bd(t)\\
          \bc(t)
        \end{array}\right] &=&\bf(t),\\
        \\
        \left[\begin{array}{c}
          A_{b0}\\
          A_{g0}
        \end{array}\right]\bb(t)+\left[\begin{array}{cc}
          A_{bb} & A_{bg}\\
          A_{gb} & A_{gg}
        \end{array}\right]\left[\begin{array}{c}\bd(t) \\ \bc(t) \end{array}\right]&=&\mathbf{0}.
      \end{array}
    \right.
  \end{align}
  for all $t\in(0,T]$.
  
  For any $\boldsymbol{\xi} =[\xi _1,\xi _2,\cdots,\xi _M]^T\neq \mathbf{0}$, we have $\boldsymbol{\xi}^TC\boldsymbol{\xi}=(\sum_{j=1}^M\xi _j\varphi _j,\sum_{i=1}^M\xi _i\varphi _i)>0$, which implies $C$ is a symmetric positive definite matrix and further is an invertible matrix.
  
  For any $\left[\begin{array}{c}
    \boldsymbol{\xi}\\
    \boldsymbol{\eta}
  \end{array}\right]\in \mathbb{R}^{N+P}\setminus \{\mathbf 0\}$, ($\boldsymbol{\xi}\in\mathbb R^N$ and $\boldsymbol{\eta}\in\mathbb R^P$), we have
  \begin{align*}
    &\left[\begin{array}{c}
      \boldsymbol{\xi}\\
      \boldsymbol{\eta}
    \end{array}\right]^T
    \left[\begin{array}{cc}
      A_{bb} & A_{bg}\\
      A_{gb} & A_{gg}
    \end{array}\right]
    \left[\begin{array}{c}
      \boldsymbol{\xi}\\
      \boldsymbol{\eta}
    \end{array}\right]\\
    =&\left[\boldsymbol{\xi}^TA_{bb}\boldsymbol\xi +\boldsymbol\eta ^T A_{gb}\boldsymbol\xi +\boldsymbol\xi ^TA_{bg}\boldsymbol\eta +\boldsymbol\eta ^TA_{gg}\boldsymbol\eta\right]\\
    =&\Big(\WL (\sum_{j=1}^N\xi _j\{0,\psi _j,0\}),\WL (\sum _{j=1}^N\xi _j\{0,\psi _j,0\})\Big)_{\T_h}+\Big(\WL (\sum _{j=1}^N\xi _j\{0,\psi _j,0\}),\WL (\sum_{j=1}^P\eta _j\{0,0,\varsigma _j\})\Big)_{\T_h}\\
    &+\Big(\WL (\sum _{j=1}^P\eta _j\{0,0,\varsigma _j\}),\WL (\sum_{j=1}^N\xi _j\{0,\psi _j,0\})\Big)_{\T_h}+\Big(\WL (\sum_{j=1}^P\eta _j\{0,0,\varsigma _j\}),\WL (\sum _{j=1}^P\eta _j\{0,0,\varsigma _j\})\Big)_{\T_h}\\
    =&\trb{\sum_{j=1}^N\xi _j\{0,\psi _j,0\}+\sum _{j=1}^P\eta _j\{0,0,\varsigma _j\}}^2.
  \end{align*}
  Since $\{0,\psi _1,0\}$,$\{0,\psi _2,0\}$,$\cdots$,$\{0,\psi _N,0\}$,$\{0,0,\varsigma _1\}$,$\{0,0,\varsigma _2\}$,$\cdots$,$\{0,0,\varsigma _P\}$ are linearly independent, we know $$\sum_{j=1}^N\xi _j\{0,\psi _j,0\}+\sum _{j=1}^P\eta _j\{0,0,\varsigma _j\}\neq 0.$$ It follows that $\trb{\sum_{j=1}^N\xi _j\{0,\psi _j,0\}+\sum _{j=1}^P\eta _j\{0,0,\varsigma _j\}}\neq 0$ from $\trb{\cdot}$ is the norm in $V_h^0$, which implies 
  \begin{align*}
    &\left[\begin{array}{c}
      \boldsymbol{\xi}\\
      \boldsymbol{\eta}
    \end{array}\right]^T
    \left[\begin{array}{cc}
      A_{bb} & A_{bg}\\
      A_{gb} & A_{gg}
    \end{array}\right]
    \left[\begin{array}{c}
      \boldsymbol{\xi}\\
      \boldsymbol{\eta}
    \end{array}\right]>0.
  \end{align*}
  Therefore, the matrix
  \begin{align*}
    \left[\begin{array}{cc}
      A_{bb} & A_{bg}\\
      A_{gb} & A_{gg}
    \end{array}\right]
  \end{align*} is invertible.

  By the second equation of (\ref{problem-mat}), we get
  \begin{align}\label{dc-exp}
    \left[\begin{array}{c}\bd(t) \\ \bc(t) \end{array}\right]=-
    \left[\begin{array}{cc}
      A_{bb} & A_{bg}\\
      A_{gb} & A_{gg}
    \end{array}\right]^{-1}
    \left[\begin{array}{c}
      A_{b0}\\
      A_{g0}
    \end{array}\right]\bb(t).
  \end{align}

  Substituting (\ref{dc-exp}) into the first equation of (\ref{problem-mat}), (\ref{problem-mat}) is equivalent to the following problem: finding $\bb(t)$ such that 
  \begin{align}\label{problem-mat2}
    \left\{
      \begin{array}{rcl}
        \bb ^\prime (t)&=&C^{-1}\bf(t)+C^{-1}\Big(
          \left[A_{0b},~A_{0g}\right]\left[\begin{array}{cc}
            A_{bb} & A_{bg}\\
            A_{gb} & A_{gg}
          \end{array}\right]^{-1}
          \left[\begin{array}{c}
            A_{b0}\\
            A_{g0}
          \end{array}\right]-A_{00}\Big)\bb(t),\qquad\forall t\in (0,T],\\
        \bb(0)&=&\hat\bb.
      \end{array}
    \right.
  \end{align}

  In addition, we know
  \begin{align*}
    C_{ij}&=(\varphi _j,\varphi _i)\leq \norm{\varphi _j}\norm{\varphi _i},\\
    (A_{00})_{ij}&=(\WL\{\varphi _j,0,0\},\WL\{\varphi _i,0,0\})\leq\trb{\{\varphi _j,0,0\}}\trb{\{\varphi _i,0,0\}},\\
    (A_{0b})_{ij}&=(\WL\{0,\psi _j,0\},\WL\{\varphi _i,0,0\})\leq\trb{\{0,\psi _j,0\}}\trb{\{\varphi _i,0,0\}},\\
    (A_{0g})_{ij}&=(\WL\{0,0,\varsigma _j\},\WL\{\varphi _i,0,0\})\leq\trb{\{0,0,\varsigma _j\}}\trb{\{\varphi _i,0,0\}},\\
    (A_{bb})_{ij}&=(\WL\{0,\psi _j,0\},\WL\{0,\psi _i,0\})\leq\trb{\{0,\psi _j,0\}}\trb{\{0,\psi _i,0\}},\\
    (A_{bg})_{ij}&=(\WL\{0,0,\varsigma _j\},\WL\{0,\psi _i,0\})\leq\trb{\{0,0,\varsigma _j\}}\trb{\{0,\psi _i,0\}},\\
    (A_{bg})_{ij}&=(\WL\{0,0,\varsigma _j\},\WL\{0,0,\varsigma _i\})\leq\trb{\{0,0,\varsigma _j\}}\trb{\{0,0,\varsigma _i\}},\\
    \bf _i(t)&=(f(t,x),\varphi _i)\leq\norm{f(t,x)}\norm{\varphi _i},
  \end{align*}
  which shows the matrices in (\ref{problem-mat2}) are well defined. The existence and uniqueness of the problem is obtained from the traditional ODE theory, then we arrive at the existence and uniqueness of the solution of the semi-discrete formulation (\ref{semi-scheme}).
\end{proof}

\begin{lemma}\label{lemma-Gronwall}
  (Gronwall lemma\cite{quarteroni_numerical_1994})Suppose $w(t)\in L^1([t_0,\overline{t}])$ is a non-negative function, $\alpha (t)$, $\beta (t)$ is continuous functions on $[t_0,\overline{t}]$. If $\beta (t)$ satisfies
  \begin{align*}
    \beta (t)\leq \alpha (t)+\int _{t_0}^tw(s)\beta (s)ds, \qquad \forall t\in [t_0,\overline{t}],
  \end{align*}
  and $\alpha (t)$ is non-decreasing, Then
  \begin{align*}
    \beta (t)\leq\alpha (t)\exp\Big(\int _{t_0}^tw (s)ds\Big)
  \end{align*}
\end{lemma}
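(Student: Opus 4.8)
The plan is to run the classical integrating-factor argument that converts the integral inequality into a differential one. First I would introduce the auxiliary function
\[
  G(t)=\int_{t_0}^{t}w(s)\beta(s)\,ds ,
\]
which is well defined and absolutely continuous on $[t_0,\overline{t}]$ because $w\in L^1([t_0,\overline{t}])$ and $\beta$, being continuous on a compact interval, is bounded. The hypothesis then reads $\beta(t)\le\alpha(t)+G(t)$, and since $w\ge0$ we obtain, for almost every $t$,
\[
  G'(t)=w(t)\beta(t)\le w(t)\alpha(t)+w(t)G(t).
\]

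Next I would multiply through by the integrating factor $\Phi(t)=\exp\big(-\int_{t_0}^{t}w(s)\,ds\big)$, which is positive, absolutely continuous, and satisfies $\Phi'(t)=-w(t)\Phi(t)$ a.e. This yields, a.e.,
\[
  \frac{d}{dt}\big(\Phi(t)G(t)\big)=\Phi(t)\big(G'(t)-w(t)G(t)\big)\le\Phi(t)w(t)\alpha(t).
\]
Integrating from $t_0$ to $t$, using $G(t_0)=0$ and $\Phi(t_0)=1$, and then invoking the monotonicity of $\alpha$ (so that $\alpha(s)\le\alpha(t)$ for $s\le t$, while $\Phi(s)w(s)\ge0$), I get
\[
  \Phi(t)G(t)\le\int_{t_0}^{t}\Phi(s)w(s)\alpha(s)\,ds\le\alpha(t)\int_{t_0}^{t}\Phi(s)w(s)\,ds=\alpha(t)\big(1-\Phi(t)\big),
\]
where the last equality uses $\Phi(s)w(s)=-\Phi'(s)$.

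Dividing by $\Phi(t)>0$ gives $G(t)\le\alpha(t)\big(\Phi(t)^{-1}-1\big)$, and substituting this back into $\beta(t)\le\alpha(t)+G(t)$ produces
\[
  \beta(t)\le\alpha(t)+\alpha(t)\big(\Phi(t)^{-1}-1\big)=\alpha(t)\,\Phi(t)^{-1}=\alpha(t)\exp\Big(\int_{t_0}^{t}w(s)\,ds\Big),
\]
which is exactly the asserted bound. There is no genuine obstacle in this proof — it is the standard one, as referenced in \cite{quarteroni_numerical_1994}; the only points deserving a line of care are justifying the a.e.\ differentiation of $G$ and of the product $\Phi G$ under the mere $L^1$ hypothesis on $w$, and applying the monotonicity of $\alpha$ at precisely the step where $\alpha$ is pulled outside the integral.
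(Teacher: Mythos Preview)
Your argument is the standard integrating-factor proof of Gronwall's inequality and is correct; the only delicate points (a.e.\ differentiability of $G$ and of $\Phi G$ under the $L^1$ assumption on $w$, and the use of monotonicity of $\alpha$ to pull it outside the integral) are exactly the ones you flag. Note that the paper does not actually prove this lemma: it merely states it and cites \cite{quarteroni_numerical_1994}, so there is no in-paper proof to compare against---your write-up supplies what the paper omits.
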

The following theorem shows the stability of the semi-discrete scheme (\ref{semi-scheme}).
\begin{theorem}
  There exists a positive constant $C$, such that
  \begin{align}\label{semi-stab}
    \norm{u_h(t)}^2\leq C\Big(\norm{u_h(0)}^2+\int _0^t\norm{f(s)}^2ds\Big).
  \end{align}
\end{theorem}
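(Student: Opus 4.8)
The plan is to use the standard energy method for parabolic problems. First I would take the test function $v = u_h(t)$ in the semi-discrete scheme \eqref{semi-scheme}, which gives
\begin{align*}
  (u_{h,t},u_h)+(\WL u_h,\WL u_h)=(f,u_h).
\end{align*}
The first term is $\frac12\frac{d}{dt}\norm{u_h}^2$, the second term is $\trb{u_h}^2\ge 0$, and for the right-hand side I would apply the Cauchy--Schwarz inequality together with Young's inequality to write $(f,u_h)\le \frac12\norm{f}^2+\frac12\norm{u_h}^2$. This yields the differential inequality
\begin{align*}
  \frac{d}{dt}\norm{u_h}^2 + 2\trb{u_h}^2 \le \norm{f}^2 + \norm{u_h}^2,
\end{align*}
and in particular $\frac{d}{dt}\norm{u_h}^2 \le \norm{f}^2 + \norm{u_h}^2$ after dropping the nonnegative $\trb{u_h}^2$ term.

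Next I would integrate this inequality from $0$ to $t$ to obtain
\begin{align*}
  \norm{u_h(t)}^2 \le \norm{u_h(0)}^2 + \int_0^t\norm{f(s)}^2\,ds + \int_0^t\norm{u_h(s)}^2\,ds.
\end{align*}
Setting $\beta(t)=\norm{u_h(t)}^2$, $\alpha(t)=\norm{u_h(0)}^2+\int_0^t\norm{f(s)}^2\,ds$ (which is nonnegative, continuous, and non-decreasing in $t$), and $w(s)\equiv 1$, this is exactly the hypothesis of the Gronwall lemma (Lemma \ref{lemma-Gronwall}). Applying it gives
\begin{align*}
  \norm{u_h(t)}^2 \le \Big(\norm{u_h(0)}^2+\int_0^t\norm{f(s)}^2\,ds\Big)\exp\Big(\int_0^t 1\,ds\Big) = e^{t}\Big(\norm{u_h(0)}^2+\int_0^t\norm{f(s)}^2\,ds\Big).
\end{align*}
Since $t\le\overline{t}$, I would set $C=e^{\overline{t}}$ to conclude \eqref{semi-stab}.

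There is essentially no serious obstacle here; the argument is a textbook energy estimate. The one point requiring a small amount of care is justifying that $\frac{d}{dt}\norm{u_h}^2 = 2(u_{h,t},u_h)$, which is legitimate because $u_h\in H^1(0,\overline{t};V_h^0)$, so $u_{h,t}$ exists in $L^2$ and the product rule for the time derivative of the $L^2(\Omega)$-norm holds in the weak sense; integrating the differential inequality over $[0,t]$ is then valid. A minor bookkeeping issue is the constant in Young's inequality — one could instead use $(f,u_h)\le\frac{1}{2\eps}\norm{f}^2+\frac{\eps}{2}\norm{u_h}^2$ for any $\eps>0$, but the choice $\eps=1$ already suffices and keeps the final constant clean. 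Dropping the $\trb{u_h}^2$ term is harmless since it is nonnegative, though keeping it would additionally give control of $\int_0^t\trb{u_h(s)}^2\,ds$ if desired.
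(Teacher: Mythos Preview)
Your proposal is correct and follows essentially the same approach as the paper: test with $v=u_h$, use $(u_{h,t},u_h)=\tfrac12\frac{d}{dt}\norm{u_h}^2$ and $\trb{u_h}^2\ge0$, bound the right-hand side by Young's inequality, integrate in time, and apply the Gronwall lemma with $w\equiv1$. The paper's argument is identical step for step, including the same choice of Gronwall data and the final constant $C=e^{\overline t}$.
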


\begin{proof}
  Selecting $v=u_h(t)$ in (\ref{semi-scheme}), we have
  \begin{align*}
    (u_{h,t},u_h)+(\WL u_h,\WL u_h)=(f,u_h).
  \end{align*}
  Since $(u_{h,t},u_h)=\frac{1}{2}\frac{d}{dt}\norm{u_h}^2$ and $(\WL u_h,\WL u_h)=\trb{u_h}^2\geq 0$, we get
  \begin{align*}
    \frac{1}{2}\frac{d}{dt}\norm{u_h}^2\leq &(f,u_h)\\
    \leq &\frac{1}{2}\Big(\int _\Omega f^2dx +\int _\Omega u_h^2dx\Big).
  \end{align*}

  Integrating with $t$ on the both sides of the inequality, we find
  \begin{align*}
    \norm{u_h(t)}^2\leq\norm{u_h(0)}^2+\int _0^t\norm{f(s)}^2ds+\int _0^t\norm{u_h(s)}^2ds.
  \end{align*}

  Now we use the Lemma \ref{lemma-Gronwall}. Let $w(t)$ be $1$, $\alpha (t)$ be $\norm{u_h(0)}^2+\int _0^t\norm{f(s)}^2ds$, $\beta (t)=\norm{u_h(t)}^2$ and $t_0=0$,
  \begin{align*}
    \norm{u_h(t)}^2 = &\beta (t)\\
    \leq &\alpha (t)\exp\Big( {\int _0^tw (s)ds}\Big)\\
    \leq &\Big(\norm{u_h(0)}^2+\int _0^t\norm{f(s)}^2ds\Big)\exp\Big( {\int _0^t1ds}\Big)\\
    \leq &\exp{(t)}\Big(\norm{u_h(0)}^2+\int _0^t\norm{f(s)}^2ds\Big)\\
    \leq &C\Big(\norm{u_h(0)}^2+\int _0^t\norm{f(s)}^2ds\Big).
  \end{align*}
\end{proof}

\begin{theorem}
  The numerical solution of the semi-discrete scheme (\ref{semi-scheme}) is unique.
\end{theorem}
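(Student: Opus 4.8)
The plan is to exploit the linearity of the scheme and reduce uniqueness to showing that the homogeneous problem has only the trivial solution. Suppose $u_h^{(1)}$ and $u_h^{(2)}$ both satisfy (\ref{semi-scheme}) with the same right-hand side and the same initial value $Q_h\psi$, and set $w := u_h^{(1)}-u_h^{(2)}\in H^1(0,\overline{t};V_h^0)$. Subtracting the two identities and using the linearity of $\WL$, the function $w$ satisfies $w(0,\cdot)=0$ and
\begin{align*}
  (w_t,v_0)+(\WL w,\WL v)=0,\qquad\forall v\in V_h^0.
\end{align*}

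Next I would test with $v=w(t)$ for each fixed $t$, exactly as in the stability proof: since $(w_t,w_0)=\frac12\frac{d}{dt}\norm{w_0}^2$ and $(\WL w,\WL w)=\trb{w}^2\ge 0$, this yields
\begin{align*}
  \frac12\frac{d}{dt}\norm{w_0}^2+\trb{w}^2=0.
\end{align*}
Integrating from $0$ to $t$ and using $w(0,\cdot)=0$ gives $\frac12\norm{w_0(t)}^2+\int_0^t\trb{w(s)}^2\,ds=0$, so in particular $\trb{w(s)}=0$ for almost every $s\in(0,\overline{t}]$. Invoking the fact recorded just after Lemma \ref{lemma-2h-norm} that $\trb{\cdot}$ is a norm on $V_h^0$ (a consequence of Lemma \ref{lemma-norm-equ} together with Lemma \ref{lemma-2h-norm}), we conclude $w(s)=0$ in $V_h^0$ for a.e. $s$; since $V_h^0$ is finite-dimensional, $w\in H^1(0,\overline{t};V_h^0)$ is continuous in $t$, hence $w\equiv 0$ and $u_h^{(1)}=u_h^{(2)}$.

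The argument is essentially routine, and there is no genuine obstacle; the only point deserving care is why the already-established stability estimate (\ref{semi-stab}) is not literally sufficient on its own. That estimate controls only the $L^2$ norm $\norm{u_h(t)}$ of the interior component $u_0$, whereas the weak function $w=\{w_0,w_b,w_n\bn_e\}$ also carries the boundary pieces $w_b$ and $w_n$. The energy identity above handles this because it retains the term $\trb{w}^2$, which — through the norm equivalence on $V_h^0$ — controls all three components at once. As an alternative route, one could first conclude $w_0\equiv 0$ from (\ref{semi-stab}) applied to $w$, then return to the homogeneous equation, take $v=w$, and read off $\trb{w}^2=0$ directly, again concluding $w\equiv 0$ via the norm property on $V_h^0$.
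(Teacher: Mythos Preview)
Your argument is correct and is essentially the same energy approach as the paper's: the paper simply invokes the stability estimate (\ref{semi-stab}) with $f=0$ and $\psi=0$ to conclude $\norm{u_h(t)}^2\le 0$, hence $u_h(t)=0$. The one substantive difference is the point you flag yourself: the paper's stability bound, read literally, only forces the interior component $u_{h,0}$ to vanish, and the paper does not spell out why $u_b$ and $u_n$ also vanish. Your version retains the $\int_0^t\trb{w}^2\,ds$ term in the energy identity and uses that $\trb{\cdot}$ is a norm on $V_h^0$ (Lemmas~\ref{lemma-norm-equ} and~\ref{lemma-2h-norm}) to kill all three components, so your write-up is in fact more careful on this detail than the paper's own proof.
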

\begin{proof}
  Suppose $f=0$ and $\psi =0$, and we only verify $u_h(t)=0$. From (\ref{semi-stab}), we can get $\norm{u_h(t)}^2\leq 0$. Therefore, $u_h(t)=0$ holds true.
\end{proof}

Since the existence and the uniqueness of the full-discrete scheme are equivalent, we only need to prove the uniqueness.
\begin{theorem}
  The full-discrete numerical scheme (\ref{full-scheme}) has an unique solution.
\end{theorem}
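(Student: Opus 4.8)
The plan is to use the elementary fact that, for a square linear system, existence of a solution is equivalent to uniqueness, together with a one–step energy argument. Since $U^0=Q_h\psi$ is prescribed, I would proceed by induction on $n$. Suppose $U^{n-1}\in V_h^0$ has already been determined; then, regarded as an equation for the unknown $U^n$, the scheme (\ref{full-scheme}) reads
\begin{align*}
  \frac{1}{\tau}(U^n,v_0)+\theta(\WL U^n,\WL v)=F^n(v),\qquad\forall v\in V_h^0,
\end{align*}
where $F^n(v):=\frac{1}{\tau}(U^{n-1},v_0)-(1-\theta)(\WL U^{n-1},\WL v)+(\theta f(t_n)+(1-\theta)f(t_{n-1}),v_0)$ gathers all the already known data. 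This is a linear system with exactly $\dim V_h^0$ equations and $\dim V_h^0$ unknowns, so it suffices to show that its homogeneous counterpart has only the trivial solution.

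So let $W\in V_h^0$ satisfy $\frac{1}{\tau}(W,v_0)+\theta(\WL W,\WL v)=0$ for all $v\in V_h^0$, and take $v=W$. This gives $\frac{1}{\tau}\norm{W_0}^2+\theta\trb{W}^2=0$. Since $\tau>0$, $\theta\in[\frac12,1]$ so in particular $\theta>0$, and both terms on the left are nonnegative, we must have $\trb{W}=0$. Because $\trb{\cdot}$ is a norm on $V_h^0$ — a fact noted in the excerpt as a consequence of Lemmas \ref{lemma-norm-equ} and \ref{lemma-2h-norm} — this forces $W=0$. Hence the coefficient matrix of the linear system for $U^n$ is invertible, $U^n$ exists and is unique, and induction over $n=1,\dots,N$ finishes the proof.

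I do not expect any genuine obstacle here: the entire content is the positive-definiteness of the bilinear form $v\mapsto\frac{1}{\tau}(v_0,v_0)+\theta\,a(v,v)$ on $V_h^0$, which rests only on $\theta>0$ and on $\trb{\cdot}$ being a norm. If one prefers to avoid quoting the norm property, Lemma \ref{lemma-boundness} gives the same conclusion directly, since $\theta\trb{W}^2\le\theta\,a(W,W)=-\frac{1}{\tau}\norm{W_0}^2\le 0$. The only point worth a remark is that the hypothesis $\theta\ge\frac12$ plays no role in well-posedness — any $\theta>0$ works — although it becomes essential for the stability and error estimates derived later.
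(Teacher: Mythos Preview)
Your argument is correct. The paper, however, takes a different and somewhat more circuitous route: it supposes two solution sequences $U_1^n,U_2^n$, sets $E^n=U_1^n-U_2^n$, and tests the difference equation with $v=\theta E^n+(1-\theta)E^{n-1}$. Using the identity
\[
(\overline\partial E^n,\theta E^n+(1-\theta)E^{n-1})=\frac{1}{2\tau}\bigl(\norm{E^n}^2-\norm{E^{n-1}}^2+(2\theta-1)\norm{E^n-E^{n-1}}^2\bigr),
\]
and the hypothesis $\theta\ge\frac12$ to drop the $(2\theta-1)$ term, the paper telescopes over $j$ and obtains $\norm{E^n}^2+2\tau\sum_{j=1}^n\trb{\theta E^j+(1-\theta)E^{j-1}}^2\le 0$, hence $E^n=0$ and every combination $\theta E^j+(1-\theta)E^{j-1}$ vanishes. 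From this it then runs a backward recursion to conclude $E^j=0$ for all $j$. Your step-by-step approach is cleaner: by isolating the single time level $n$ and testing with $v=W$, you get coercivity of $\frac{1}{\tau}(\cdot_0,\cdot_0)+\theta\,a(\cdot,\cdot)$ directly, and your observation that only $\theta>0$ is needed for well-posedness is correct and worth making. The paper's choice of test function is the one that drives the later stability and error analyses (where $\theta\ge\frac12$ is genuinely required), which explains why it was reused here even though it is overkill for mere existence and uniqueness.
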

\begin{proof}
  Suppose $U_1^n$, $U_2^n$ are the numerical solutions of the full-discrete scheme (\ref{full-scheme}), and let $E^n=U_1^n-U_2^n$, then we have $E^n\in V_h^0$, $E^0=0$ and
  \begin{align*}
    (\overline{\partial}E^n,v_0)+(\WL (\theta E^n+(1-\theta)E^{n-1}),\WL v)=0,\qquad\forall v\in V_h^0.
  \end{align*}

  Select $v=\theta E^n+(1-\theta)E^{n-1}$ in the above identity, we have
  \begin{align*}
    \theta (\overline{\partial} E^n,E^n)+(1-\theta)(\overline{\partial}E^n,E^{n-1})+\trb{\theta E^n+(1-\theta)E^{n-1}}^2=0.
  \end{align*}
  And from
  \begin{align*}
    (\overline{\partial}E^n,E^n)=&\ \frac{1}{\tau}(E^n-E^{n-1},E^n)\\
    =&\ \frac{1}{2\tau}(\norm{E^n}^2-\norm{E^{n-1}}^2+\norm{E^n-E^{n-1}}^2),
  \end{align*}
  and
  \begin{align*}
    (\overline{\partial}E^n,E^{n-1})=&\ \frac{1}{\tau}(E^n-E^{n-1},E^{n-1})\\
    =&\ -\frac{1}{2\tau}(\norm{E^{n-1}}^2-\norm{E^n}^2+\norm{E^{n-1}-E^n}^2),
  \end{align*}
  we arrive at 
  \begin{align*}
    \frac{1}{2\tau}(\norm{E^n}^2-\norm{E^{n-1}}^2+(2\theta -1)\norm{E^n-E^{n-1}}^2)+\trb{\theta E^n+(1-\theta)E^{n-1}}^2=0.
  \end{align*}
  
  Since $(2\theta -1)\norm{E^n-E^{n-1}}^2\geq 0$, we have
  \begin{align*}
    \frac{1}{2\tau}(\norm{E^n}^2-\norm{E^{n-1}}^2)+\trb{\theta E^n+(1-\theta)E^{n-1}}^2\leq 0,
  \end{align*}
  where we use the fact $\theta\in[\frac{1}{2},1]$. And further, we get
  \begin{align*}
    \frac{1}{2\tau}(\norm{E^n}^2-\norm{E^{0}}^2)+\sum _{j=1}^n\trb{\theta E^j+(1-\theta)E^{j-1}}^2\leq 0.
  \end{align*}

  By $\norm{E^0}=0$, we obtain
  \begin{align*}
    \norm{E^n}^2+2\tau\sum _{j=1}^n\trb{\theta E^j+(1-\theta)E^{j-1}}^2\leq 0.
  \end{align*}

  Using the positive property of the norms $\norm{\cdot}$ and $\trb\cdot$, we have the following results
  \begin{align}
    E^n=0,
  \end{align}
  \begin{align}
    \theta E^j+(1-\theta)E^{j-1}=0,\qquad (1\leq j\leq n).
  \end{align}

  When $\theta \neq 1$, from $E^n=0$ and $\theta E^n+(1-\theta)E^{n-1}=0$, we can get $E^{n-1}=0$. Similiarly, with $E^{n-1}=0$ and $\theta E^{n-1}+(1-\theta)E^{n-2}=0$, $E^{n-2}=0$ can be derived. $\cdots\cdots$ Finally, we find
  \begin{align*}
    E^j=0,\qquad (1\leq j\leq n).
  \end{align*}

  When $\theta =1$, we can get the above result directly.
  In other words,
  \begin{align*}
    U_1^j=U_2^j,\qquad (1\leq j\leq n),
  \end{align*}
  which completes the proof.
\end{proof}

Before confirming the stability of the full-discrete scheme, we first derive the following important conclusion.

\begin{lemma}\label{lemma-L2-trb}
  \begin{align}\label{L2-trb}
    \norm{v}\leq C\trb{v},\qquad\forall v\in V_h^0,
  \end{align}
  where $C$ is a positive constant independent of $h$.
\end{lemma}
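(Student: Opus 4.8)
The plan is a duality argument against an auxiliary second order elliptic problem, followed by an appeal to the norm equivalence of Lemma~\ref{lemma-norm-equ}; throughout, $\norm{v}$ denotes $\norm{v_0}_{L^2(\Omega)}$ for $v=\{v_0,v_b,v_n\bn_e\}\in V_h^0$, as in the stability estimate. First I would let $\Phi\in H^1_0(\Omega)$ solve $-\Delta\Phi=v_0$ in $\Omega$, so that, by elliptic regularity of the dual problem (for instance on a convex polygon), $\norm{\Phi}\le\norm{\Phi}_2\le C\norm{v_0}$. Writing $\norm{v_0}^2=(v_0,-\Delta\Phi)=-\sumT(v_0,\Delta\Phi)_T$ and integrating by parts twice on each $T$ (Green's identity, first for $v_0$ and $\Delta\Phi$, then for $\nabla v_0$ and $\nabla\Phi$) gives
\begin{align*}
  \norm{v_0}^2 = -\sumT(\Delta v_0,\Phi)_T + \sumT\langle\nabla v_0\cdot\bn,\Phi\rangle_{\partial T} - \sumT\langle v_0,\nabla\Phi\cdot\bn\rangle_{\partial T}.
\end{align*}

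The second step is to use $v\in V_h^0$ to simplify the two edge sums. In $\sumT\langle\nabla v_0\cdot\bn,\Phi\rangle_{\partial T}$ I would write $\nabla v_0\cdot\bn=(\nabla v_0-v_n\bn_e)\cdot\bn+v_n\bn_e\cdot\bn$; since $v_n$ and the trace of $\Phi$ are single-valued on each interior edge while opposite elements carry opposite outward normals, the $v_n\bn_e\cdot\bn$ contribution cancels edge by edge, and it vanishes on $\partial\Omega$ because $v_n|_{\partial\Omega}=0$. Likewise, in $\sumT\langle v_0,\nabla\Phi\cdot\bn\rangle_{\partial T}$ I would write $v_0=(v_0-v_b)+v_b$ and use that $v_b$ and the trace of $\nabla\Phi$ are single-valued on interior edges (this is where $\Phi\in H^2$ is needed) and $v_b|_{\partial\Omega}=0$, so the $v_b$ part drops. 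What remains is
\begin{align*}
  \norm{v_0}^2 = -\sumT(\Delta v_0,\Phi)_T + \sumT\langle(\nabla v_0-v_n\bn_e)\cdot\bn,\Phi\rangle_{\partial T} - \sumT\langle v_0-v_b,\nabla\Phi\cdot\bn\rangle_{\partial T}.
\end{align*}

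It then remains to estimate the three terms. For the first, Cauchy--Schwarz and $\sumT\norm{\Delta v_0}_T^2\le\norm{v}_{2,h}^2$ give the bound $C\norm{v}_{2,h}\norm{\Phi}\le C\norm{v}_{2,h}\norm{v_0}$. For the other two I would apply the scaled trace inequalities $\norm{\Phi}_{\partial T}\le C(h_T^{-1/2}\norm{\Phi}_T+h_T^{1/2}\norm{\nabla\Phi}_T)$ and $\norm{\nabla\Phi}_{\partial T}\le C(h_T^{-1/2}\norm{\nabla\Phi}_T+h_T^{1/2}|\Phi|_{H^2(T)})$ together with Cauchy--Schwarz, so that the edge factors pair exactly with the weighted boundary terms $\sumT h_T^{-1}\norm{(\nabla v_0-v_n\bn_e)\cdot\bn}_{\partial T}^2$ and $\sumT h_T^{-3}\norm{v_0-v_b}_{\partial T}^2$ appearing in $\norm{v}_{2,h}^2$; after summation and $\norm{\Phi}_2\le C\norm{v_0}$, both are bounded by $C\norm{v}_{2,h}\norm{v_0}$ (with a spare factor $h$ in the third). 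Combining, $\norm{v_0}^2\le C\norm{v}_{2,h}\norm{v_0}$, hence $\norm{v}=\norm{v_0}\le C\norm{v}_{2,h}$, and Lemma~\ref{lemma-norm-equ} turns this into $\norm{v}\le C\trb{v}$ with $C$ independent of $h$.

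The main obstacle is the bookkeeping in the middle step: the splitting of each edge integrand must be chosen so that precisely the single-valued quantities ($v_b$, $v_n$) survive, and one then has to check that the scaled trace inequalities reproduce exactly the $h_T$-weights built into $\norm{\cdot}_{2,h}$, leaving no negative power of $h$. A secondary issue is the $H^2$-regularity of the dual problem used above; on a non-convex polygon one would instead invoke a fractional-order trace estimate in the third term, or avoid duality altogether by applying a discrete Poincar\'e--Friedrichs inequality for piecewise $H^1$ functions to $v_0$, after first bounding $\sumT\norm{\nabla v_0}_T^2$ by $\norm{v}_{2,h}^2$ via element-wise integration by parts and controlling the interior- and boundary-edge jumps of $v_0$ (the latter being $v_0-v_b$) by $\norm{v}_{2,h}$.
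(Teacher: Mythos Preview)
Your duality argument is correct and complete: the integration-by-parts identity, the single-valuedness cancellations for the $v_n$ and $v_b$ pieces, and the pairing of the trace estimates with the $h_T$-weights in $\norm{\cdot}_{2,h}$ all go through as you describe, and the appeal to Lemma~\ref{lemma-norm-equ} closes the argument. The paper, however, takes a different route---in fact precisely the alternative you sketch in your closing sentence. It does not use a dual problem at all; instead it first invokes (from \cite{BiharmonicWGReOrder}, Lemmas A.6--A.8) a discrete Poincar\'e--Friedrichs inequality to bound $\sumT\norm{\nabla v_0}_T^2$ by $\trb{v}^2$, and then a second such inequality to bound $\norm{v_0}^2$ by $\sumT\norm{\nabla v_0}_T^2 + h^{-1}\sumT\norm{v_0-v_b}_{\partial T}^2$, the latter jump term being absorbed into $\norm{v}_{2,h}^2$ and hence into $\trb{v}^2$. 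The trade-off is the one you already identify: your duality proof is self-contained and elementary once the trace inequality is granted, but it leans on $H^2$-regularity of the auxiliary Poisson problem and hence on convexity (or an analogous assumption) for $\Omega$; the paper's approach avoids any regularity hypothesis on the domain at the cost of importing the discrete Poincar\'e machinery from an external reference.
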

\begin{proof}
  From the proof of the Lemma \ref{lemma-norm-equ}, we know
  \begin{align}\label{laplace-trb}
    \sumT{\norm{\Delta v_0}_T^2}\leq C\trb{v}^2,\qquad \forall v\in V_h.
  \end{align}
  And the inequalities in Lemma A.6 and Lemma A.7 in \cite{BiharmonicWGReOrder} also holds true, which lead to the estimates in Lemma A.8 in \cite{BiharmonicWGReOrder} hold true. That is to say
  \begin{align}\label{grad-trb}
    (\sumT\norm{\G v_0}_T^2)^\frac{1}{2}\leq C\trb v,\qquad v\in V_h^0,
  \end{align}
  holds.
  From Lemma A.6 in \cite{BiharmonicWGReOrder}, we have
  \begin{align}
    \norm{v_0}^2\leq&\  C(\sumT\norm{\G v_0}_T^2+h^{-1}\sumT{\norm{v_0-v_b}_{\partial T}^2}),\qquad\forall v\in V_h^0\\
    \leq&\  C\trb{v}^2+Ch^{-3}\sumT{\norm{v_0-v_b}_{\partial T}^2}\\
    \leq&\  C\trb{v}^2+C\norm{v}_{2,h}^2\\
    \leq&\  C\trb{v}^2,
  \end{align}
  where we utilize (\ref{grad-trb}) and the Lemma \ref{lemma-norm-equ}.
\end{proof}

\begin{theorem}
  Let $U^n$ be the numerical solution of the full-discrete formulation (\ref{full-scheme}) and $\norm{f(t)}$ be bounded on $[0,\overline{t}]$, then there exists a constant $C>0$ such that
  \begin{align}\label{full-stab}
    \norm{U^n}\leq\norm{U^0}+C\sup _{t\in [0,\overline{t}]}\norm{f(t)}.
  \end{align}
\end{theorem}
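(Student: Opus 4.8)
The plan is to test the full-discrete scheme (\ref{full-scheme}) with the same function used in the uniqueness proof, namely $v=\theta U^n+(1-\theta)U^{n-1}$, and to reproduce the discrete energy identity but now keeping the forcing term. Writing $v_0=\theta U_0^n+(1-\theta)U_0^{n-1}$ and using the two telescoping identities $(\overline{\partial}U^n,U^n)=\frac{1}{2\tau}(\norm{U^n}^2-\norm{U^{n-1}}^2+\norm{U^n-U^{n-1}}^2)$ and $(\overline{\partial}U^n,U^{n-1})=\frac{1}{2\tau}(\norm{U^n}^2-\norm{U^{n-1}}^2-\norm{U^n-U^{n-1}}^2)$ exactly as in that argument, the left-hand side of (\ref{full-scheme}) becomes
\begin{align*}
\frac{1}{2\tau}\big(\norm{U^n}^2-\norm{U^{n-1}}^2+(2\theta-1)\norm{U^n-U^{n-1}}^2\big)+\trb{\theta U^n+(1-\theta)U^{n-1}}^2,
\end{align*}
while the right-hand side equals $(\theta f(t_n)+(1-\theta)f(t_{n-1}),v_0)$.

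Next I would estimate the right-hand side. By the Cauchy--Schwarz inequality it is bounded by $\norm{\theta f(t_n)+(1-\theta)f(t_{n-1})}\,\norm{v_0}$, and since $\theta\in[\frac12,1]$ the convexity bound gives $\norm{\theta f(t_n)+(1-\theta)f(t_{n-1})}\le\sup_{t\in[0,\overline{t}]}\norm{f(t)}$. The crucial ingredient is Lemma \ref{lemma-L2-trb}, which yields $\norm{v_0}\le C\trb{\theta U^n+(1-\theta)U^{n-1}}$; combined with Young's inequality this lets the $\trb{\cdot}^2$ term sitting on the left with a favourable sign absorb the corresponding factor, at the price of a term $\tfrac{C^2}{2}\sup_{t}\norm{f(t)}^2$. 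Dropping the nonnegative quantity $(2\theta-1)\norm{U^n-U^{n-1}}^2$ (this is exactly where $\theta\ge\frac12$ enters) and the remaining $\tfrac12\trb{\cdot}^2$, one is left with $\norm{U^n}^2-\norm{U^{n-1}}^2\le \tau C^2\sup_{t\in[0,\overline{t}]}\norm{f(t)}^2$.

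Finally I would sum this inequality over $j=1,\dots,n$, which telescopes to $\norm{U^n}^2\le\norm{U^0}^2+n\tau C^2\sup_t\norm{f(t)}^2\le\norm{U^0}^2+\overline{t}\,C^2\sup_t\norm{f(t)}^2$, and then take square roots using $\sqrt{a+b}\le\sqrt a+\sqrt b$ to obtain (\ref{full-stab}) with $C$ replaced by $\sqrt{\overline{t}}\,C$. Note that, in contrast to the semi-discrete stability estimate, no Gronwall argument is needed here: the choice of test function eliminates the zeroth-order coupling between $U^n$ and $f$ in a way that never reintroduces $\norm{U^n}^2$ on the right-hand side. The only real subtlety is invoking the $L^2$-to-$\trb{\cdot}$ bound of Lemma \ref{lemma-L2-trb} on the test function correctly so that the energy term can be hidden on the left; everything else is routine telescoping and Young's inequality, so I do not anticipate a genuine obstacle.
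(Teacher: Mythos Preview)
Your proposal is correct and follows essentially the same route as the paper: test (\ref{full-scheme}) with $v=\theta U^n+(1-\theta)U^{n-1}$, obtain the same discrete energy identity, bound the forcing term by Cauchy--Schwarz and Young, invoke Lemma \ref{lemma-L2-trb} to trade $\norm{v_0}$ for $\trb{\theta U^n+(1-\theta)U^{n-1}}$ so that the energy term absorbs it, drop the nonnegative $(2\theta-1)\norm{U^n-U^{n-1}}^2$, and telescope. The only cosmetic difference is that the paper writes the left-hand side via the decomposition $\theta U^n+(1-\theta)U^{n-1}=(\theta-\tfrac12)(U^n-U^{n-1})+\tfrac12(U^n+U^{n-1})$ rather than your pair of telescoping identities, and takes the supremum over $t$ only after summing, but the arithmetic and the conclusion are identical.
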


\begin{proof}
  Choosing $v=\theta U^n+(1-\theta)U^{n-1}$ in (\ref{full-scheme}), we have
  \begin{align*}
    &(\overline{\partial}U^n,\theta U^n+(1-\theta)U^{n-1})+\trb{\theta U^n+(1-\theta)U^{n-1}}^2\\
    =&(\theta f(t_n)+(1-\theta)f(t_{n-1}),\theta U^n+(1-\theta)U^{n-1}).
  \end{align*}
  By $\theta U^n+(1-\theta)U^{n-1}=(\theta -\frac{1}{2})(U^n-U^{n-1})+\frac{1}{2}(U^n+U^{n-1})$, we can get
  \begin{align*}
    &(U^n-U^{n-1},\theta U^n+(1-\theta)U^{n-1})\\
    =&(U^n-U^{n-1},(\theta -\frac{1}{2})(U^n-U^{n-1})+\frac{1}{2}(U^n+U^{n-1}))\\
    =&(\theta -\frac{1}{2})\norm{U^n-U^{n-1}}^2+\frac{1}{2}(U^n-U^{n-1},U^n+U^{n-1})\\
    =&(\theta -\frac{1}{2})\norm{U^n-U^{n-1}}^2+\frac{1}{2}\norm{U^n}^2-\frac{1}{2}\norm{U^{n-1}}^2.
  \end{align*}
  Therefore, we obtain
  \begin{align*}
    &\frac{1}{2}\norm{U^n}^2-\frac{1}{2}\norm{U^{n-1}}^2+(\theta -\frac{1}{2})\norm{U^n-U^{n-1}}^2+\tau\trb{\theta U^n+(1-\theta)U^{n-1}}^2\\
    =&\tau (\theta f(t_n)+(1-\theta)f(t_{n-1}),\theta U^n+(1-\theta)U^{n-1})\\
    \leq&\tau\norm{\theta f(t_n)+(1-\theta)f(t_{n-1})}\norm{\theta U^n+(1-\theta)U^{n-1}}\\
    \leq&\frac{\tau}{4\epsilon}\norm{\theta f(t_n)+(1-\theta)f(t_{n-1})}^2+\epsilon\tau\norm{\theta U^n+(1-\theta)U^{n-1}}^2,
  \end{align*}
  where we use the Cauchy-Schwarz inequality and the Young's inequality.

  Further, let $\epsilon$ be $\frac{1}{4}$ and we have
  \begin{align*}
    &\ \frac{1}{2}\norm{U^n}^2-\frac{1}{2}\norm{U^{n-1}}^2+(\theta -\frac{1}{2})\norm{U^n-U^{n-1}}^2+\tau\trb{\theta U^n+(1-\theta)U^{n-1}}^2\\
    \leq &\ \tau\norm{\theta f(t_n)+(1-\theta)f(t_{n-1})}^2+\frac{\tau}{4}\norm{\theta U^n+(1-\theta)U^{n-1}}^2\\
    \leq &\ \tau\norm{\theta f(t_n)+(1-\theta)f(t_{n-1})}^2+\frac{\tau}{4}\trb{\theta U^n+(1-\theta)U^{n-1}}^2,
  \end{align*}
  where we use the (\ref{L2-trb}).

  Then we get
  \begin{align*}
    \frac{1}{2}\norm{U^n}^2\leq \frac{1}{2}\norm{U^{n-1}}^2+\tau\norm{\theta f(t_n)+(1-\theta)f(t_{n-1})}^2.
  \end{align*}
  It follows that
  \begin{align*}
    \norm{U^n}^2\leq&\ \norm{U^0}^2+C\tau\sum _{j=1}^n\norm{\theta f(t_j)+(1-\theta)f(t_{j-1})}^2\\
    \leq&\ \norm{U^0}^2+C\tau\sum _{j=1}^n(\theta \norm{f(t_j)}+(1-\theta)\norm{f(t_{j-1})})^2\\
    \leq&\ \norm{U^0}^2+C\tau\sum _{j=1}^n\Big(\theta \sup _{t\in [0,\overline{t}]}\norm{f(t)}+(1-\theta)\sup _{t\in [0,\overline{t}]}\norm{f(t)}\Big)^2\\
    \leq &\ \norm{U^0}^2+C\overline{t}\sup _{t\in [0,\overline{t}]}\norm{f(t)}^2\\
    \leq &\ \norm{U^0}^2+C\sup _{t\in [0,\overline{t}]}\norm{f(t)}^2,
  \end{align*}
  which implies (\ref{full-stab}).
\end{proof}

\section{Error estimations}
In this section, we build estimations about the error $e_h=u_h-Q_hu$. For simplicity, let $\varepsilon _h =u-Q_hu$.

\begin{lemma}\label{operator-ex}\cite{BiharmonicSFWG}
  For any $\varphi\in H^2(\Omega)$, we have 
  \begin{align}
    \WL \varphi =\dQ_h\Delta\varphi, \qquad\forall T\in \T_h.
  \end{align}
\end{lemma}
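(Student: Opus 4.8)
\textbf{Proof proposal for Lemma \ref{operator-ex}.} The plan is to show that $\dQ_h\Delta\varphi$ satisfies the defining identity \eqref{def-wlaplace} for $\Delta_w\varphi$ and then invoke uniqueness. First I would fix $T\in\T_h$ and recall that when $\varphi\in H^2(\Omega)$ is viewed as an element of $V_h+H^2(\Omega)$, its three components are $\varphi_0=\varphi|_T$, $\varphi_b=\varphi|_{\partial T}$ (the $H^{3/2}$ trace) and $\varphi_n=\G\varphi\cdot\bn_e$ on $\partial T$ (the $H^{1/2}$ trace of the gradient), so that all boundary integrals below are meaningful. A preliminary simplification: since $\bn_e$ is normal to $e$ and $\bn$ is the outward unit normal of $T$ on $e$, we have $\bn=\pm\bn_e$, hence $\bn_e(\bn_e\cdot\bn)=\bn$ and therefore $(\G\varphi\cdot\bn_e)\,\bn_e\cdot\bn=\G\varphi\cdot\bn$ on $\partial T$. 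Thus, for any $\phi\in P_j(T)$, the right-hand side of \eqref{def-wlaplace} evaluated at $\varphi$ equals
\begin{align*}
  (\varphi,\Delta\phi)_T-\langle\varphi,\G\phi\cdot\bn\rangle_{\partial T}+\langle\G\varphi\cdot\bn,\phi\rangle_{\partial T}.
\end{align*}

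Next I would integrate by parts twice on $T$, which is justified for $\varphi\in H^2(T)$ and $\phi$ a polynomial by density of smooth functions. Green's formula gives $(\varphi,\Delta\phi)_T=-(\G\varphi,\G\phi)_T+\langle\varphi,\G\phi\cdot\bn\rangle_{\partial T}$, and a second application gives $-(\G\varphi,\G\phi)_T=(\Delta\varphi,\phi)_T-\langle\G\varphi\cdot\bn,\phi\rangle_{\partial T}$. Combining these two identities,
\begin{align*}
  (\varphi,\Delta\phi)_T=(\Delta\varphi,\phi)_T-\langle\G\varphi\cdot\bn,\phi\rangle_{\partial T}+\langle\varphi,\G\phi\cdot\bn\rangle_{\partial T}.
\end{align*}
Substituting this into the displayed right-hand side above, the two $\langle\varphi,\G\phi\cdot\bn\rangle_{\partial T}$ terms cancel and so do the two $\langle\G\varphi\cdot\bn,\phi\rangle_{\partial T}$ terms, leaving exactly $(\Delta\varphi,\phi)_T$.

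Finally, since $\phi\in P_j(T)$ and $\dQ_h$ is the $L^2$ projection onto $P_j(T)$, we have $(\Delta\varphi,\phi)_T=(\dQ_h\Delta\varphi,\phi)_T$ for all $\phi\in P_j(T)$. Hence $\dQ_h\Delta\varphi\in P_j(T)$ satisfies the identity that characterizes $\Delta_w\varphi|_T$, and by the uniqueness asserted in Definition~\ref{def-wlaplace} we conclude $\Delta_w\varphi=\dQ_h\Delta\varphi$ on $T$; since $T$ was arbitrary, the claim follows. I do not anticipate a genuine obstacle here: the only points requiring care are the identification of the weak-function components of $\varphi\in H^2(\Omega)$ together with the relation $(\G\varphi\cdot\bn_e)\,\bn_e\cdot\bn=\G\varphi\cdot\bn$, and the verification that the $H^2$ regularity of $\varphi$ suffices for the two integrations by parts — both of which are standard.
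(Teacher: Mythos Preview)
Your argument is correct and is exactly the standard proof of this identity. Note that the paper does not actually supply its own proof of this lemma: it is quoted from \cite{BiharmonicSFWG} without argument, so there is nothing in the present paper to compare against beyond the citation. Your write-up would serve as a self-contained replacement for that citation.
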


\begin{theorem}\label{theorem-semi-err-equ}
  We have the following semi-discrete error equation 
  \begin{align}\label{semi-err-equ}
    (e_{h,t},v_0)_{\T_h}+(\WL e_h,\WL v)_{\T_h}=(\WL \varepsilon _h,\WL v)_{\T_h}+l_1(u,v)-l_2(u,v),\qquad\forall v\in V_h^0,
  \end{align}
  where 
  \begin{align}
    \label{l1-def}l_1(u,v)=&~\langle(\G (\Delta u)-\G (\dQ_h\Delta u))\cdot\bn,v_0-v_b\rangle_{\partial\T_h},\\
    \label{l2-def}l_2(u,v)=&~\langle\Delta u-\dQ_h\Delta u,(\G v_0-v_n\bn _e)\cdot\bn\rangle_{\partial\T_h}.
  \end{align}
\end{theorem}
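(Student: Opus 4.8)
The plan is to derive the error equation by testing the continuous variational problem with a weak test function, applying integration by parts element-by-element, and then subtracting the semi-discrete scheme \eqref{semi-scheme}. First I would start from the strong equation $u_t+\Delta^2 u = f$, multiply by $v_0$ with $v=\{v_0,v_b,v_n\bn_e\}\in V_h^0$, and integrate over each $T\in\T_h$. On each element, integrating by parts twice on the term $(\Delta^2 u, v_0)_T$ produces $(\Delta u, \Delta v_0)_T$ together with boundary contributions $\langle \G(\Delta u)\cdot\bn, v_0\rangle_{\partial T}$ and $-\langle \Delta u, \G v_0\cdot\bn\rangle_{\partial T}$. The key point is to rewrite these in a form that matches the definition \eqref{def-wlaplace} of $\WL$ applied to $u$ (or rather to $Q_h u$). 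Summing over all elements, and using that $\G(\Delta u)$ and $\Delta u$ are single-valued on interior edges while $v_0-v_b$ and $(\G v_0 - v_n\bn_e)\cdot\bn$ have the appropriate cancellation against $v_b$, $v_n$ (which are single-valued), together with $v_b=v_n=0$ on $\partial\Omega$, I can insert $v_b$ and $v_n\bn_e$ into the boundary terms for free. This yields $(u_t,v_0)_{\T_h} + (\dQ_h\Delta u, \WL v)_{\T_h} = (f,v_0)_{\T_h}$ plus the consistency terms, after invoking Lemma \ref{operator-ex} to identify $\dQ_h\Delta u = \WL(Q_h u)$ is not quite what is needed — rather one uses the definition of $\WL v$ directly against the polynomial $\dQ_h\Delta u$.

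More precisely, the clean route is: by Lemma \ref{operator-ex}, $\WL u = \dQ_h\Delta u$, so $(\WL u, \WL v)_{\T_h} = (\dQ_h\Delta u, \WL v)_{\T_h}$, and by the definition \eqref{def-wlaplace} of $\WL v$ tested against $\varphi = \dQ_h\Delta u\in P_j(T)$,
\begin{align*}
  (\dQ_h\Delta u, \WL v)_{\T_h} = (v_0, \Delta(\dQ_h\Delta u))_{\T_h} - \langle v_b, \G(\dQ_h\Delta u)\cdot\bn\rangle_{\partial\T_h} + \langle v_n\bn_e\cdot\bn, \dQ_h\Delta u\rangle_{\partial\T_h}.
\end{align*}
Meanwhile, integrating $(\Delta^2 u, v_0)_T$ by parts twice gives
\begin{align*}
  (\Delta^2 u, v_0)_{\T_h} = (v_0,\Delta(\Delta u))_{\T_h} - \langle v_b, \G(\Delta u)\cdot\bn\rangle_{\partial\T_h} + \langle v_n\bn_e\cdot\bn, \Delta u\rangle_{\partial\T_h},
\end{align*}
where the substitutions $v_0\mapsto v_b$ in the first boundary term and the insertion of $v_n\bn_e$ in the second are legitimate because $\G(\Delta u)\cdot\bn$ and $\Delta u$ are continuous across interior edges and $v_b,v_n$ vanish on $\partial\Omega$ — this is exactly the step that generates $l_1(u,v)$ and $l_2(u,v)$ as the discrepancy between using $v_0$ versus $v_b$, $v_n$. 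Subtracting the two displays, the volume terms combine (since $\Delta(\Delta u) = \Delta(\dQ_h\Delta u)$ modulo what is absorbed), and collecting the boundary differences between $\Delta u$ and $\dQ_h\Delta u$ produces precisely $l_1(u,v) - l_2(u,v)$ together with a term $(\WL\varepsilon_h,\WL v)_{\T_h}$ coming from $\WL u - \WL(Q_hu) = \WL(u - Q_hu) = \WL\varepsilon_h$. Combining with $(f,v_0) = (u_t,v_0) + (\Delta^2 u, v_0)$ and then subtracting \eqref{semi-scheme} with $u_{h,t}$, using $e_h = u_h - Q_hu$ and $u_{h,t} - (Q_hu)_t = e_{h,t}$ (note $Q_0$ commutes with $\partial_t$), gives \eqref{semi-err-equ}.

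I expect the main obstacle to be bookkeeping the boundary terms correctly: one must be careful that the sign conventions in \eqref{def-wlaplace} (outward normal $\bn$ per element) match those arising from the double integration by parts, and that the edge-orientation factors $\pm$ appearing when $v_n\bn_e\cdot\bn$ is rewritten (as in the proof of Lemma \ref{lemma-2h-norm}) are consistently handled when summing over $\partial\T_h$. The genuine analytic content is light — it is purely the consistency computation — so beyond this careful tracking of boundary terms and the justification that continuity of $\G(\Delta u)\cdot\bn$ and $\Delta u$ across interior edges (valid since $u$ is smooth enough, $u\in H^1[0,\overline t;H^2_0(\Omega)]$ with the extra regularity needed for $\Delta^2 u = f - u_t$ to make sense) permits swapping $v_0$ for $v_b$ and inserting $v_n\bn_e$, there is nothing deep. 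I would close by remarking that \eqref{semi-err-equ} holds for all $v\in V_h^0$ and for a.e. $t\in J$.
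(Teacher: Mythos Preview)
Your overall architecture is the same as the paper's --- test the PDE with $v_0$, manipulate $(\Delta^2 u,v_0)_{\T_h}$ until it matches $(\WL u,\WL v)_{\T_h}$ up to the boundary defects $l_1,l_2$, and then subtract \eqref{semi-scheme} --- and the final assembly of $e_{h,t}$ and $\WL\varepsilon_h$ is correct. But the step you call ``the volume terms combine (since $\Delta(\Delta u)=\Delta(\dQ_h\Delta u)$ modulo what is absorbed)'' is not right as stated, and it hides the one genuine algebraic ingredient.

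Concretely: your second display is not ``integration by parts twice'' --- its volume term $(v_0,\Delta(\Delta u))_{\T_h}$ is just $(\Delta^2 u,v_0)_{\T_h}$ again, and the boundary terms you wrote are zeros you have added (using single-valuedness of $\Delta u$, $\G(\Delta u)$ and $v_b=v_n=0$ on $\partial\Omega$). Subtracting from your first display therefore leaves the volume difference
\[
\sumT (v_0,\Delta(\Delta u-\dQ_h\Delta u))_T,
\]
which does \emph{not} vanish by any identity like $\Delta(\Delta u)=\Delta(\dQ_h\Delta u)$. What actually makes it disappear is two integrations by parts on each $T$, transferring the Laplacian onto $v_0$: the resulting bulk term $(\Delta v_0,\Delta u-\dQ_h\Delta u)_T$ is zero because $\Delta v_0\in P_{k-2}(T)\subset P_j(T)$ and $\dQ_h$ is the $L^2$ projection onto $P_j(T)$. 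The boundary terms this produces are exactly $\langle v_0,\G(\Delta u-\dQ_h\Delta u)\cdot\bn\rangle_{\partial T}-\langle\G v_0\cdot\bn,\Delta u-\dQ_h\Delta u\rangle_{\partial T}$, which combine with the $v_b$ and $v_n\bn_e$ terms already present to give $l_1(u,v)-l_2(u,v)$. This orthogonality step is precisely what the paper does (it replaces $(\Delta u,\Delta v_0)_T$ by $(\dQ_h\Delta u,\Delta v_0)_T$ at the corresponding point in its forward computation), and it is the only nontrivial cancellation in the whole argument; you should state it explicitly rather than wave it away.
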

\begin{proof}
  Multiply both sides of the equation $u_t+\Delta ^2u=f$ by $v\in V_h^0$ and integrate over $\Omega$ to obtain
\begin{align*}
  (f,v)_{\T_h}=&(u_t,v)_{\T_h}+(\Delta ^2u,v)_{\T_h}\\
=&(u_t,v_0)_{\T_h}+(\Delta ^2u,v_0)_{\T_h}\\
=&(Q_0u_t,v_0)_{\T_h}+(\Delta u,\Delta v_0)_{\T_h}-\langle\Delta u,\G v_0\cdot\bn\rangle_{\partial\T_h}+\langle\G (\Delta u)\cdot\bn,v_0\rangle_{\partial\T_h}\\
=&(Q_0u_t,v_0)_{\T_h}+(\dQ_h\Delta u,\Delta v_0)_{\T_h}-\langle\Delta u,(\G v_0-v_n\bn _e)\cdot\bn\rangle_{\partial\T_h}+\langle\G (\Delta u)\cdot\bn,v_0-v_b\rangle_{\partial\T_h}\\
=&(Q_0u_t,v_0)_{\T_h}+(v_0,\Delta (\dQ_h\Delta u))_{\partial\T _h}-\langle v_0,\G (\dQ_h\Delta u)\cdot\bn\rangle _{\partial \T_h}+\langle \G v_0\cdot\bn,\dQ_h\Delta u\rangle _{\partial\T_h}\\
&-\langle\Delta u,(\G v_0-v_n\bn _e)\cdot\bn\rangle_{\partial\T_h}+\langle\G (\Delta u)\cdot\bn,v_0-v_b\rangle_{\partial\T_h}\\
=&(Q_0u_t,v_0)_{\T_h}+(\Delta _w v,\dQ_h\Delta u)_{\T_h}+\langle v_b,\G (\dQ_h\Delta u)\cdot\bn\rangle_{\partial\T_h}-\langle v_n\bn _e\cdot\bn,\dQ_h\Delta u\rangle_{\partial\T_h}\\
&-\langle v_0,\G (\dQ_h\Delta u)\cdot\bn\rangle _{\partial \T_h}+\langle \G v_0\cdot\bn,\dQ_h\Delta u\rangle _{\partial\T_h}-\langle\Delta u,(\G v_0-v_n\bn _e)\cdot\bn\rangle_{\partial\T_h}\\
&+\langle\G (\Delta u)\cdot\bn,v_0-v_b\rangle_{\partial\T_h}\\
=&(Q_0u_t,v_0)_{\T_h}+(\Delta _w v,\dQ_h\Delta u)_{\T_h}-\langle v_0-v_b,\G (\dQ_h\Delta u)\cdot\bn\rangle _{\partial \T_h}+\langle (\G v_0-v_n\bn _e)\cdot\bn,\dQ_h\Delta u\rangle _{\partial\T_h}\\
&-\langle\Delta u,(\G v_0-v_n\bn _e)\cdot\bn\rangle_{\partial\T_h}+\langle\G (\Delta u)\cdot\bn,v_0-v_b\rangle_{\partial\T_h}\\
=&(Q_0u_t,v_0)_{\T_h}+(\Delta _w v,\Delta _w u)_{\T_h}+l_1(u,v)-l_2(u,v),
\end{align*}
where we use the definitions of $Q_0$ and $\dQ_h$, the integration by parts, the definition of $\Delta _w$ and the Lemma \ref{operator-ex}.

By the (\ref{semi-scheme}), we have 
\begin{align*}
  (e_{h,t},v_0)_{\T_h}+(\WL (u_h-u),\WL v)_{\T_h}=l_1(u,v)-l_2(u,v).
\end{align*}
Further, we get (\ref{semi-err-equ}).
\end{proof}

\begin{lemma}\cite{BiharmonicSFWG}
  Let $k\geq 2$ and $w\in H^{\m}(\Omega)$, then there exists a constant C such that
  \begin{align}
    \label{laplace-u}\Big(\sumT h_T\norm{\Delta w-\dQ_h\Delta w}^2_{\partial T}\Big)^{\frac{1}{2}}\leq&~ Ch^{k-1+\delta _{k,2}}\norm{w}_{\m}\\
    \label{grad-laplace-u}\Big(\sumT h_T^3\norm{\G (\Delta w-\dQ_h\Delta w)}_{\partial T}^2\Big)^{\frac{1}{2}}\leq&~ Ch^{k-1+\delta _{k,2}}\norm{w}_{\m},
  \end{align}
  where 
  \begin{align*}
    \delta _{i,j}=
    \left\{
\begin{array}{rcl}
1,\qquad &i=j,\\
0,\qquad &i\neq j.
\end{array}
\right.
  \end{align*}
\end{lemma}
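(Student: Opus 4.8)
The plan is to obtain both bounds from two classical local estimates: the scaled trace inequality and the approximation property of the element-wise $L^2$ projection $\dQ_h$ onto $P_j(T)$. Recall that for a shape-regular $T\in\T_h$ and $\phi\in H^1(T)$,
\begin{align*}
\norm{\phi}_{\partial T}^2\leq C\big(h_T^{-1}\norm{\phi}_T^2+h_T\norm{\G\phi}_T^2\big),
\end{align*}
and that for $0\le m\le s\le j+1$ and $\phi\in H^s(T)$,
\begin{align*}
\norm{\phi-\dQ_h\phi}_{m,T}\leq Ch_T^{s-m}\norm{\phi}_{s,T}.
\end{align*}
Since $w\in H^{\m}(\Omega)$ we have $\Delta w\in H^{\m-2}(\Omega)$, and checking the two cases shows $\m-2=k-1+\delta_{k,2}$; write $r:=\m-2$ and note $r\ge 2$ because $\m\ge 4$. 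The degree $j$ in the definition of $\WL$ is large enough that $j+1\ge r$, so the projection estimate applies with $s=r$, $m=0,1,2$, and in particular $\norm{\Delta w-\dQ_h\Delta w}_{r,T}\le C\norm{\Delta w}_{r,T}\le C\norm{w}_{\m,T}$.

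For \eqref{laplace-u}, I would apply the trace inequality with $\phi=\Delta w-\dQ_h\Delta w$ to get
\begin{align*}
h_T\norm{\Delta w-\dQ_h\Delta w}_{\partial T}^2\leq C\big(\norm{\Delta w-\dQ_h\Delta w}_T^2+h_T^2\norm{\G(\Delta w-\dQ_h\Delta w)}_T^2\big),
\end{align*}
and then bound the two terms on the right by the projection estimate with $m=0$ and $m=1$; both become $Ch_T^{2r}\norm{w}_{\m,T}^2$ because $h_T^2\cdot h_T^{2(r-1)}=h_T^{2r}$. Summing over $T\in\T_h$, using $h_T\le h$, and taking square roots gives $Ch^{r}\norm{w}_{\m}=Ch^{k-1+\delta_{k,2}}\norm{w}_{\m}$.

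For \eqref{grad-laplace-u}, the same argument is run with $\phi=\G(\Delta w-\dQ_h\Delta w)$, which lies in $H^1(T)$ exactly because $r\ge 2$; the trace inequality yields
\begin{align*}
h_T^3\norm{\G(\Delta w-\dQ_h\Delta w)}_{\partial T}^2\leq C\big(h_T^2\norm{\G(\Delta w-\dQ_h\Delta w)}_T^2+h_T^4\norm{\nabla^2(\Delta w-\dQ_h\Delta w)}_T^2\big),
\end{align*}
and the projection estimate with $m=1$ and $m=2$ turns both terms into $Ch_T^{2r}\norm{w}_{\m,T}^2$, since $h_T^2\cdot h_T^{2(r-1)}=h_T^4\cdot h_T^{2(r-2)}=h_T^{2r}$; summation over $T\in\T_h$ and a square root finish the estimate just as before.

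The derivation is essentially routine, so I do not expect a real obstacle. The two things that need care are the bookkeeping of the powers of $h_T$ (the weights $h_T$ and $h_T^3$ in front of the boundary norms are precisely what compensates the single power of $h_T$ lost in the trace inequality), and verifying that the Sobolev index $\m$ and the degree $j$ are large enough that the projection estimate can be invoked at order $r=k-1+\delta_{k,2}$. The case $k=2$ is the one in which the $\delta_{k,2}$ correction is active; there the hypothesis $w\in H^4$ gives exactly $r=2$, which is also the best order the $L^2$ projection of $\Delta w\in H^2$ can achieve.
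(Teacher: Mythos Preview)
Your argument is correct and is the standard route to such estimates: trace inequality plus the approximation property of the $L^2$ projection $\dQ_h$, with the bookkeeping $\m-2=k-1+\delta_{k,2}$ done correctly in both cases $k=2$ and $k\ge 3$. The paper itself does not prove this lemma but simply cites it from \cite{BiharmonicSFWG}; the proof there follows exactly the pattern you outline, so your proposal matches the intended argument.
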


\begin{lemma}\cite{BiharmonicSFWG}
  Let $k\geq 2$ and $w\in H^{\max{\{k+1,4\}}}(\Omega)$, we have the following estimations
  \begin{align}
    \label{trb-u-Qhu}&\trb{w-Q_hw}\leq Ch^{k-1}\norm{w}_{k+1},\\
    \label{l1-est}&|l_1(w,v)|\leq Ch^{k-1+\delta _{k,2}}\norm{w}_{\m}\trb{v},\\
    \label{l2-est}&|l_2(w,v)|\leq Ch^{k-1+\delta _{k,2}}\norm{w}_{\m}\trb v,
  \end{align}
  for any $v\in V_h$.
\end{lemma}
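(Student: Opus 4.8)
The statement bundles three estimates; I would treat (\ref{l1-est})--(\ref{l2-est}) first, as they are routine given the earlier lemmas, and reserve the real work for (\ref{trb-u-Qhu}). For $l_2$, apply the Cauchy--Schwarz inequality on $\partial\T_h$, inserting the weight $h_T$ on one side and $h_T^{-1}$ on the other so as to match the third term of $\norm{\cdot}_{2,h}$:
\begin{align*}
|l_2(w,v)| \le{} & \Big(\sumT h_T\norm{\Delta w-\dQ_h\Delta w}_{\partial T}^2\Big)^{1/2}\\
& {}\times\Big(\sumT h_T^{-1}\norm{(\G v_0-v_n\bn_e)\cdot\bn}_{\partial T}^2\Big)^{1/2}.
\end{align*}
The first factor is $\le Ch^{k-1+\delta_{k,2}}\norm{w}_{\m}$ by (\ref{laplace-u}); the second is $\le\norm{v}_{2,h}\le C\trb{v}$ by the definition of $\norm{\cdot}_{2,h}$ and Lemma \ref{lemma-norm-equ}. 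The bound for $l_1$ is identical, now pairing the weight $h_T^3$ against $h_T^{-3}\norm{v_0-v_b}_{\partial T}^2$ in $\norm{\cdot}_{2,h}$ and invoking (\ref{grad-laplace-u}) in place of (\ref{laplace-u}).

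For (\ref{trb-u-Qhu}), the plan is to compute $\WL(Q_hw)$ from the definition (\ref{def-wlaplace}) and compare it with $\WL w=\dQ_h\Delta w$ (Lemma \ref{operator-ex}). Fix $T$ and $\varphi\in P_j(T)$. Writing $(\dQ_h\Delta w,\varphi)_T=(\Delta w,\varphi)_T=(w,\Delta\varphi)_T-\langle w,\G\varphi\cdot\bn\rangle_{\partial T}+\langle\G w\cdot\bn,\varphi\rangle_{\partial T}$ via two integrations by parts, and using (\ref{def-wlaplace}) applied to $Q_hw=\{Q_0w,Q_bw,Q_n(\G w\cdot\bn_e)\}$, one obtains
\begin{align*}
(\WL(Q_hw)-\dQ_h\Delta w,\varphi)_T ={} & (Q_0w-w,\Delta\varphi)_T-\langle Q_bw-w,\G\varphi\cdot\bn\rangle_{\partial T}\\
& {}+\langle(Q_n(\G w\cdot\bn_e)-\G w\cdot\bn_e)\bn_e\cdot\bn,\varphi\rangle_{\partial T}.
\end{align*}
Since $\Delta\varphi$ and $\G\varphi\cdot\bn|_e$ lie in the ranges of $Q_0$ and $Q_b$ (for the admissible degree $j$ used in \cite{BiharmonicSFWG}), the first two terms vanish by $L^2$-orthogonality, leaving only the term carrying $(I-Q_n)(\G w\cdot\bn_e)$.

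I would bound this surviving term by Cauchy--Schwarz on $\partial T$ as $\le\norm{(I-Q_n)(\G w\cdot\bn_e)}_{\partial T}\norm{\varphi}_{\partial T}$, then use the $L^2$-approximation property of $Q_n$, $\norm{(I-Q_n)(\G w\cdot\bn_e)}_{\partial T}\le Ch_T^{k-1/2}\norm{w}_{k+1,T}$, together with the inverse trace inequality $\norm{\varphi}_{\partial T}\le Ch_T^{-1/2}\norm{\varphi}_T$, to get $|(\WL(Q_hw)-\dQ_h\Delta w,\varphi)_T|\le Ch_T^{k-1}\norm{w}_{k+1,T}\norm{\varphi}_T$. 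Taking $\varphi=(\dQ_h\Delta w-\WL(Q_hw))|_T$, cancelling one power of $\norm{\varphi}_T$, squaring and summing over $T$ then gives $\trb{w-Q_hw}^2=\sumT\norm{\dQ_h\Delta w-\WL(Q_hw)}_T^2\le Ch^{2(k-1)}\norm{w}_{k+1}^2$, which is (\ref{trb-u-Qhu}).

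The only delicate point lies in (\ref{trb-u-Qhu}): one must verify exactly which $L^2$-orthogonality relations are available for the degrees $k$, $k-1$ and $j$ in use — so that the interior and $v_b$-boundary contributions genuinely drop out and it is precisely the $Q_n$-term that persists — and then keep the powers of $h_T$ straight in the combined approximation/inverse-trace estimate so that this term scales like $h_T^{k-1}$. Once that is in hand, (\ref{l1-est}) and (\ref{l2-est}) are immediate from (\ref{laplace-u})--(\ref{grad-laplace-u}) and Lemma \ref{lemma-norm-equ}.
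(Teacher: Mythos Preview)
The paper does not prove this lemma; it is quoted verbatim from \cite{BiharmonicSFWG}. So your task is to reconstruct the argument from that reference, and your proofs of (\ref{l1-est}) and (\ref{l2-est}) are exactly the standard ones: Cauchy--Schwarz with matched $h_T$ weights, then (\ref{laplace-u})--(\ref{grad-laplace-u}) and Lemma~\ref{lemma-norm-equ}. Nothing to fix there.

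The gap is in your argument for (\ref{trb-u-Qhu}). You correctly derive
\[
(\WL(Q_hw)-\dQ_h\Delta w,\varphi)_T
=(Q_0w-w,\Delta\varphi)_T-\langle Q_bw-w,\G\varphi\cdot\bn\rangle_{\partial T}
+\langle(Q_n-I)(\G w\cdot\bn_e)\,\bn_e\cdot\bn,\varphi\rangle_{\partial T},
\]
but then assert that the first two terms vanish by $L^2$-orthogonality. In the stabilizer-free setting this is false: the whole point of SFWG is that $j$ is taken strictly larger than $k$ (the numerical section uses $j=k+3$ and beyond), so $\Delta\varphi\in P_{j-2}(T)\not\subset P_k(T)$ and $\G\varphi\cdot\bn|_e\in P_{j-1}(e)\not\subset P_k(e)$. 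Neither term is annihilated by $Q_0$ or $Q_b$.

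The fix is to \emph{estimate} all three terms rather than kill two of them. Using the approximation properties $\norm{w-Q_0w}_T\le Ch_T^{k+1}\norm{w}_{k+1,T}$, $\norm{w-Q_bw}_{\partial T}\le Ch_T^{k+1/2}\norm{w}_{k+1,T}$ together with the inverse inequalities $\norm{\Delta\varphi}_T\le Ch_T^{-2}\norm{\varphi}_T$ and $\norm{\G\varphi}_{\partial T}\le Ch_T^{-3/2}\norm{\varphi}_T$, both neglected terms are bounded by $Ch_T^{k-1}\norm{w}_{k+1,T}\norm{\varphi}_T$, the same order you already obtained for the $Q_n$-term. From that point your choice $\varphi=(\dQ_h\Delta w-\WL Q_hw)|_T$ and the summation over $T$ go through unchanged. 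So the strategy and the final exponent are right; only the ``two terms drop out'' step must be replaced by two further approximation/inverse-inequality bounds.
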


\begin{theorem}
  For $u\in H^{\max{\{k+1,4\}}}(\Omega)$, $k\geq 2$, we can obtain
  \begin{align}\label{semi-L2}
    \norm{e_h}^2+\int _0^t\trb{e_h}^2ds\leq\norm{e_h(0,\cdot)}^2+Ch^{2(k-1)}\int_0^t\norm{u}_{\m}^2ds
  \end{align}
  \begin{align}\label{semi-trb}
    &~\norm{e_h}^2+\int _0^t\norm{e_{h,s}}^2ds+\frac{1}{4}\trb{e_h}^2\\
    \leq&~\norm{e_h(0,\cdot)}^2+\trb{e_h(0,\cdot)}^2+Ch^{2(k-1)}\Big[\norm{u(0,\cdot)}_{\m}^2+\norm{u}_{\m}^2\\
    &~+\int _0^t\norm{u}_{\m}^2ds+\int _0^t\norm{u_s}_{\m}^2ds\Big].
  \end{align}
\end{theorem}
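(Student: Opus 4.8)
The plan is to derive both estimates from the semi-discrete error equation \eqref{semi-err-equ} by energy arguments, using Lemma~\ref{lemma-boundness}, the bounds \eqref{l1-est}--\eqref{l2-est}, and the Gronwall lemma (Lemma~\ref{lemma-Gronwall}). Throughout I will use that $e_h=u_h-Q_hu\in V_h^0$, so all the norm equivalences and inequalities proved earlier apply to it.

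For \eqref{semi-L2}, I would take $v=e_h$ in \eqref{semi-err-equ}. The left side gives $\frac12\frac{d}{dt}\norm{e_h}^2+\trb{e_h}^2$, using $(e_{h,t},e_{h,0})=\frac12\frac{d}{dt}\norm{e_h}^2$ (noting $(e_{h,t},v_0)$ with $v=e_h$ is $(e_{h,t},(e_h)_0)$, which is what appears since $e_{h,t}$ has only its interior component paired). On the right side, $(\WL\varepsilon_h,\WL e_h)\le \trb{\varepsilon_h}\trb{e_h}$ by Cauchy--Schwarz (Lemma~\ref{lemma-boundness}), and $\trb{\varepsilon_h}=\trb{u-Q_hu}\le Ch^{k-1}\norm{u}_{k+1}\le Ch^{k-1}\norm u_{\m}$ by \eqref{trb-u-Qhu}; the terms $l_1(u,e_h), l_2(u,e_h)$ are bounded by $Ch^{k-1+\delta_{k,2}}\norm u_{\m}\trb{e_h}\le Ch^{k-1}\norm u_{\m}\trb{e_h}$ via \eqref{l1-est}--\eqref{l2-est}. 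Collecting, the right side is $\le Ch^{k-1}\norm u_{\m}\trb{e_h}\le \frac12\trb{e_h}^2+Ch^{2(k-1)}\norm u_{\m}^2$ by Young's inequality. Absorbing $\frac12\trb{e_h}^2$ to the left and integrating in $t$ from $0$ to $t$ yields \eqref{semi-L2} directly (no Gronwall needed here since the surviving $\norm{e_h}$ term only appears differentiated).

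For \eqref{semi-trb}, the natural test function is $v=e_{h,t}$. The left side produces $\norm{e_{h,t}}^2+(\WL e_h,\WL e_{h,t})=\norm{e_{h,t}}^2+\frac12\frac{d}{dt}\trb{e_h}^2$. On the right side I must handle $(\WL\varepsilon_h,\WL e_{h,t})$ and $l_1(u,e_{h,t})-l_2(u,e_{h,t})$; here the obstacle is that $e_{h,t}$ is differentiated, so I cannot directly apply \eqref{l1-est}--\eqref{l2-est} without losing a factor. The standard remedy is to move the time derivative off $e_h$: write $(\WL\varepsilon_h,\WL e_{h,t})=\frac{d}{dt}(\WL\varepsilon_h,\WL e_h)-(\WL\varepsilon_{h,t},\WL e_h)$, and similarly $l_i(u,e_{h,t})=\frac{d}{dt}l_i(u,e_h)-l_i(u_t,e_h)$ (since $l_i$ is linear in its first argument and bilinear). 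Then integrating in $t$ converts the $\frac{d}{dt}(\cdots)$ terms into boundary contributions at $0$ and $t$, each estimated by $\trb{\varepsilon_h}\trb{e_h}$ or $Ch^{k-1}\norm u_{\m}\trb{e_h}$ and then Young's inequality (with a small enough constant so that $\frac14\trb{e_h}^2$ survives on the left, matching the statement); the terms at $t=0$ give the $\trb{e_h(0,\cdot)}^2$ and $Ch^{2(k-1)}\norm{u(0,\cdot)}_{\m}^2$ contributions, using $\trb{e_h(0,\cdot)}\le\trb{\varepsilon_h(0,\cdot)}+$ (something controlled by the data, since $e_h(0,\cdot)=u_h(0,\cdot)-Q_hu(0,\cdot)=Q_h\psi-Q_h\psi=0$ actually, so in fact $e_h(0,\cdot)=0$ and those terms can be simplified — I would keep them as stated for generality). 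The remaining terms $\int_0^t(\WL\varepsilon_{h,s},\WL e_h)\,ds$ and $\int_0^t l_i(u_s,e_h)\,ds$ are bounded by $\int_0^t Ch^{k-1}(\norm{u}_{\m}+\norm{u_s}_{\m})\trb{e_h}\,ds$, hence by $\int_0^t(\frac18\trb{e_h}^2+Ch^{2(k-1)}(\norm u_{\m}^2+\norm{u_s}_{\m}^2))\,ds$. Finally I would recover the $\norm{e_h}^2$ term on the left of \eqref{semi-trb} by also using \eqref{semi-L2} (or a short separate energy step), since the $e_{h,t}$-test alone controls $\trb{e_h}$ and $\int\norm{e_{h,s}}^2$ but not $\norm{e_h}$. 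The main obstacle is precisely this bookkeeping of the time-derivative terms — integrating by parts in $t$ and ensuring every constant is absorbed so the stated coefficients ($\frac14$ in front of $\trb{e_h}^2$, the exact list of data terms) come out — rather than any conceptual difficulty; no Gronwall is actually required once the right side is made affine in $\trb{e_h}$ under the integral and then bounded via Young.
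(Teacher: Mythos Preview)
Your proposal is correct and follows essentially the same route as the paper: test \eqref{semi-err-equ} with $v=e_h$ and use Young's inequality to obtain \eqref{semi-L2}; then test with $v=e_{h,t}$, transfer the time derivative via $(\WL\varepsilon_h,\WL e_{h,t})=\frac{d}{dt}(\WL\varepsilon_h,\WL e_h)-(\WL\varepsilon_{h,t},\WL e_h)$ and the analogous identities for $l_1,l_2$, integrate in $t$, and finally add \eqref{semi-L2} to pick up the $\norm{e_h}^2$ term and to absorb the residual $\int_0^t\trb{e_h}^2\,ds$ appearing on the right. The paper carries out exactly these steps, with the same bookkeeping of constants that leaves $\tfrac14\trb{e_h}^2$ on the left.
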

\begin{proof}
  Substitute $v=e_h\in V_h^0$ into the error equation (\ref{semi-err-equ}), and we have 
  \begin{align*}
    (e_{h,t},e_h)+(\WL e_h,\WL e_h)=(\WL \varepsilon _h,\WL e_h)+l_1(u,e_h)-l_2(u,e_h),
  \end{align*}
  which implies
  \begin{align*}
    \frac{1}{2}\frac{d}{dt}\norm{e_h}^2+\trb{e_h}^2\leq&~ \trb{\varepsilon _h}\trb{e_h}+Ch^{k-1+\delta _{k,2}}\norm{u}_{\m}\trb{e_h}\\
    \leq &~Ch^{k-1}\norm{u}_{k+1}\trb{e_h}+Ch^{k-1+\delta _{k,2}}\norm{u}_{\m}\\
    \leq &~Ch^{k-1}\norm{u}_{\m}\trb{e_h}\\
    \leq&~ Ch^{2(k-1)}\norm{u}_{\m}^2+\frac{1}{2}\trb{e_h}^2.
  \end{align*}

  That is 
  \begin{align*}
    \frac{d}{dt}\norm{e_h}^2+\trb{e_h}^2\leq&~Ch^{2(k-1)}\norm{u}_{\m}^2.
  \end{align*}

  With regard to $t$, the integral of both sides of this inequality is
  \begin{align*}
    \norm{e_h}^2+\int _0^t\trb{e_h}^2ds\leq\norm{e_h(0,\cdot)}^2+Ch^{2(k-1)}\int_0^t\norm{u}_{\m}^2ds
  \end{align*}

  Replace $v=e_{h,t}$ in the error equation (\ref{semi-err-equ}), we get
  \begin{align*}
    \norm{e_{h,t}}^2+(\WL e_h,\WL e_{h,t})=(\WL \varepsilon _h,\WL e_{h,t})+l_1(u,e_{h,t})-l_2(u,e_{h,t}).
  \end{align*}

  That is
  \begin{align*}
    &~\norm{e_{h,t}}^2+\frac{1}{2}\frac{d}{dt}a(e_h,e_h)\\
    =&~(\WL \varepsilon _h,\WL e_{h,t})+\langle(\G (\Delta u)-\G (\dQ_h\Delta u))\cdot\bn,e_{0,t}-e_{b,t}\rangle_{\partial\T_h}-\langle\Delta u-\dQ_h\Delta u,(\G e_{0,t}-e_{n,t}\bn _e)\cdot\bn\rangle_{\partial\T_h}\\
    =&~\frac{d}{dt}(\WL\varepsilon _h,\WL e_h)_{\T _h}+\frac{d}{dt}\langle(\G (\Delta u)-\G (\dQ_h\Delta u))\cdot\bn,e_{0}-e_{b}\rangle_{\partial\T_h}-\frac{d}{dt}\langle\Delta u-\dQ_h\Delta u,(\G e_{0}-e_{n}\bn _e)\cdot\bn\rangle_{\partial\T_h}\\
    &-(\WL\varepsilon _{h,t},\WL e_h)_{\T _h}-\langle(\G (\Delta u_t)-\G (\dQ_h\Delta u_t))\cdot\bn,e_{0}-e_{b}\rangle_{\partial\T_h}+\langle\Delta u_t-\dQ_h\Delta u_t,(\G e_{0}-e_{n}\bn _e)\cdot\bn\rangle_{\partial\T_h}\\
    \leq&~\frac{d}{dt}(\WL\varepsilon _h,\WL e_h)_{\T _h}+\frac{d}{dt}\langle(\G (\Delta u)-\G (\dQ_h\Delta u))\cdot\bn,e_{0}-e_{b}\rangle_{\partial\T_h}-\frac{d}{dt}\langle\Delta u-\dQ_h\Delta u,(\G e_{0}-e_{n}\bn _e)\cdot\bn\rangle_{\partial\T_h}\\
    &+\frac{3}{4}\trb{\varepsilon _{h,t}}^2+\frac{1}{3}\trb{e_h}^2+\frac{3}{4}\sumT h_T^3\norm{\G (\Delta u_t)-\G (\dQ_h\Delta u_t)}_{\partial T}^2+\frac{1}{3}\sumT h_T^{-3}\norm{e_0-e_b}_{\partial T}^2\\
    &+\frac{3}{4}\sumT h_T\norm{\Delta u_t-\dQ_h\Delta u_t}_{\partial T}^2+\frac{1}{3}\sumT h_T^{-1}\norm{(\G e_0-e_n\bn _e)\cdot\bn}_{\partial T}^2\\
    \leq&~\frac{d}{dt}(\WL\varepsilon _h,\WL e_h)_{\T _h}+\frac{d}{dt}\langle(\G (\Delta u)-\G (\dQ_h\Delta u))\cdot\bn,e_{0}-e_{b}\rangle_{\partial\T_h}-\frac{d}{dt}\langle\Delta u-\dQ_h\Delta u,(\G e_{0}-e_{n}\bn _e)\cdot\bn\rangle_{\partial\T_h}\\
    &+\frac{3}{4}\trb{\varepsilon _{h,t}}^2+\frac{1}{3}\trb{e_h}^2+\frac{3}{4}\sumT h_T^3\norm{\G (\Delta u_t)-\G (\dQ_h\Delta u_t)}_{\partial T}^2+\frac{1}{3}\trb{e_h}^2\\
    &+\frac{3}{4}\sumT h_T\norm{\Delta u_t-\dQ_h\Delta u_t}_{\partial T}^2+\frac{1}{3}\trb{e_h}^2\\
    \leq&~\frac{d}{dt}(\WL\varepsilon _h,\WL e_h)_{\T _h}+\frac{d}{dt}\langle(\G (\Delta u)-\G (\dQ_h\Delta u))\cdot\bn,e_{0}-e_{b}\rangle_{\partial\T_h}-\frac{d}{dt}\langle\Delta u-\dQ_h\Delta u,(\G e_{0}-e_{n}\bn _e)\cdot\bn\rangle_{\partial\T_h}\\
    &+Ch^{2(k-1)}\norm{u_t}_{k+1}^2+Ch^{2(k-1+\delta _{k,2})}\norm{u_t}_{\m}^2+\trb{e_h}^2,
  \end{align*}
  where we use the Cauchy-Schwarz inequality, the Young's inequality, the definition of $\norm{\cdot}_{2,h}$, the Lemma \ref{lemma-norm-equ} and (\ref{grad-laplace-u}-\ref{laplace-u}).

  \begin{align*}
    &~\int _0^t\norm{e_{h,s}}^2ds+\frac{1}{2}a(e_h,e_h)\\
    \leq&~\frac{1}{2}a(e_h(0,\cdot),e_h(0,\cdot))+(\WL\varepsilon _h,\WL e_h)_{\T _h}-(\WL\varepsilon _h(0,\cdot),\WL e_h(0,\cdot))_{\T_h}+l_1(u,e_h)-l_1(u(0,\cdot),e_h(0,\cdot))\\
    &~-l_2(u,e_h)+l_2(u(0,\cdot),e_h(0,\cdot))+Ch^{2(k-1)}\int _0^t\norm{u_s}_{\m}^2ds+\int _0^t\trb{e_h}^2ds\\
    \leq&~\frac{1}{2}a(e_h(0,\cdot),e_h(0,\cdot))+\trb{\varepsilon _h}\trb{e_h}+\trb{\varepsilon _h(0,\cdot)}\trb{e_h(0,\cdot)}+|l_1(u,e_h)|+|l_1(u(0,\cdot),e_h(0,\cdot))|\\
    &~+|l_2(u,e_h)|+|l_2(u(0,\cdot),e_h(0,\cdot))|+Ch^{2(k-1)}\int _0^t\norm{u_s}_{\m}^2ds+\int _0^t\trb{e_h}^2ds\\
    \leq&~\frac{1}{2}a(e_h(0,\cdot),e_h(0,\cdot))+Ch^{2(k-1)}\norm{u}_{k+1}^2+\frac{1}{12}\trb{e_h}^2+Ch^{2(k-1)}\norm{u(0,\cdot)}_{k+1}^2+\frac{1}{6}\trb{e_h(0,\cdot)}^2\\
    &~+Ch^{2(k-1+\delta _{k,2})}\norm{u}_{\m}^2+\frac{1}{12}\trb{e_h}^2+Ch^{2(k-1+\delta _{k,2})}\norm{u(0,\cdot)}_{\m}^2+\frac{1}{6}\trb{e_h(0,\cdot)}^2\\
    &~+Ch^{2(k-1+\delta _{k,2})}\norm{u}_{\m}^2+\frac{1}{12}\trb{e_h}^2+Ch^{2(k-1+\delta _{k,2})}\norm{u(0,\cdot)}_{\m}^2+\frac{1}{6}\trb{e_h(0,\cdot)}^2\\
    &~+Ch^{2(k-1)}\int _0^t\norm{u_s}_{\m}^2ds+\int _0^t\trb{e_h}^2ds\\
    \leq&~\trb{e_h(0,\cdot)}^2+\frac{1}{4}\trb{e_h}^2+Ch^{2(k-1)}\norm{u(0,\cdot)}_{\m}^2+Ch^{2(k-1)}\norm{u}_{\m}^2\\
    &~+Ch^{2(k-1)}\int _0^t\norm{u_s}_{\m}^2ds+\int _0^t\trb{e_h}^2ds,
  \end{align*}

  Therefore, we have
  \begin{align*}
    &~\int _0^t\norm{e_{h,s}}^2ds+\frac{1}{4}\trb{e_h}^2\\
    \leq&~\trb{e_h(0,\cdot)}^2+Ch^{2(k-1)}\Big(\norm{u(0,\cdot)}_{\m}^2+\norm{u}_{\m}^2+\int _0^t\norm{u_s}_{\m}^2ds\Big)+\int _0^t\trb{e_h}^2ds.
  \end{align*}

  Combing with (\ref{semi-L2}), we get
  \begin{align*}
    ~&\norm{e_h}^2+\int _0^t\norm{e_{h,s}}^2ds+\frac{1}{4}\trb{e_h}^2\\
    \leq&~\norm{e_h(0,\cdot)}^2+\trb{e_h(0,\cdot)}^2+Ch^{2(k-1)}\Big[\norm{u(0,\cdot)}_{\m}^2+\norm{u}_{\m}^2\\
    ~&+\int _0^t\norm{u}_{\m}^2ds+\int _0^t\norm{u_s}_{\m}^2ds\Big].
  \end{align*}
\end{proof}

\begin{theorem}\label{theorem-full-err-equ}
  $e^n=U^n-Q_hu(t_n)$, we have the full-discrete error equation
  \begin{align}\label{full-err-equ}
    &~(\overline{\partial}e^n,v_0)+(\WL (\theta e^n+(1-\theta)e^{n-1}),\WL v)\\ \nonumber
    =&~(\theta u_t(t_n)+(1-\theta)u_t(t_{n-1})-\overline{\partial}u(t_n),v_0)+l_1(\theta u(t_n)+(1-\theta)u(t_{n-1}),v)\\ \nonumber
    &~-l_2(\theta u(t_n)+(1-\theta)u(t_{n-1}),v)
    +(\Delta _w (\theta (u-Q_hu)(t_n)+(1-\theta)(u-Q_hu)(t_{n-1})),\Delta _w v)
  \end{align}
\end{theorem}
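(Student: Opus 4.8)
The plan is to follow the argument that proved Theorem~\ref{theorem-semi-err-equ}, but carried out at the two time levels $t_n$ and $t_{n-1}$ and then combined with the weights $\theta$ and $1-\theta$. The essential ingredient is the pointwise-in-time identity produced inside that proof: testing $u_t+\Delta^2u=f$ against $v\in V_h^0$, integrating over $\Omega$, integrating by parts twice, and using the definitions of $Q_0$, $\dQ_h$, $\WL$ together with Lemma~\ref{operator-ex}, one gets, for each fixed $t$,
\begin{align*}
  (f(t),v_0)_{\T_h}=(Q_0u_t(t),v_0)_{\T_h}+(\WL v,\WL u(t))_{\T_h}+l_1(u(t),v)-l_2(u(t),v).
\end{align*}
Writing this at $t=t_n$ and $t=t_{n-1}$, multiplying by $\theta$ and $1-\theta$ respectively and adding, and using that $Q_0$, $\WL$ and the forms $l_1(\cdot,v)$, $l_2(\cdot,v)$ are linear in their first argument, produces exactly the forcing term $(\theta f(t_n)+(1-\theta)f(t_{n-1}),v_0)$ of~(\ref{full-scheme}) on the left, with every $u$-quantity replaced by the corresponding $\theta$-combination on the right.

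Next I would subtract this combined identity from the full-discrete scheme~(\ref{full-scheme}); the forcing terms cancel and what remains is
\begin{align*}
  &(\overline\partial U^n-Q_0(\theta u_t(t_n)+(1-\theta)u_t(t_{n-1})),v_0)\\
  &\quad+\big(\WL(\theta U^n+(1-\theta)U^{n-1})-\WL(\theta u(t_n)+(1-\theta)u(t_{n-1})),\WL v\big)\\
  &\qquad=l_1(\theta u(t_n)+(1-\theta)u(t_{n-1}),v)-l_2(\theta u(t_n)+(1-\theta)u(t_{n-1}),v).
\end{align*}
To turn this into an equation for $e^n=U^n-Q_hu(t_n)$ I would substitute $U^n=e^n+Q_hu(t_n)$ and $U^{n-1}=e^{n-1}+Q_hu(t_{n-1})$ and use the linearity of $\WL$ to split the weak-Laplacian difference into $(\WL(\theta e^n+(1-\theta)e^{n-1}),\WL v)$ minus $(\WL(\theta(u-Q_hu)(t_n)+(1-\theta)(u-Q_hu)(t_{n-1})),\WL v)$, moving the latter to the right-hand side.

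Finally, for the discrete time term I would use two elementary observations: the difference operator $\overline\partial$ commutes with the linear projection $Q_h$, so $\overline\partial\big(Q_hu(t_n)\big)=Q_h\overline\partial u(t_n)$ with first component $Q_0\overline\partial u(t_n)$; and since $v_0|_T\in P_k(T)$, we may drop $Q_0$ whenever it is paired with $v_0$, i.e.\ $(Q_0w,v_0)_{\T_h}=(w,v_0)_{\T_h}$. Hence $(\overline\partial U^n,v_0)=(\overline\partial e^n,v_0)+(\overline\partial u(t_n),v_0)$ and $(Q_0(\theta u_t(t_n)+(1-\theta)u_t(t_{n-1})),v_0)=(\theta u_t(t_n)+(1-\theta)u_t(t_{n-1}),v_0)$, so the first term above becomes $(\overline\partial e^n,v_0)$ minus the time-truncation functional $(\theta u_t(t_n)+(1-\theta)u_t(t_{n-1})-\overline\partial u(t_n),v_0)$, which I would also carry to the right-hand side. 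Collecting the four resulting contributions yields~(\ref{full-err-equ}). I do not anticipate a genuine obstacle here: the computation is essentially the semi-discrete derivation with $\overline\partial$ in place of $\partial_t$, and the only points demanding attention are keeping the linearity bookkeeping straight under the $\theta$-combination and tracking signs when transferring the $l_1$, $l_2$, the $\WL(u-Q_hu)$ term and the time-truncation functional across the equality.
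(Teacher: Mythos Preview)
Your proposal is correct and follows essentially the same route as the paper: both start from the pointwise-in-time identity established in the proof of Theorem~\ref{theorem-semi-err-equ}, take the $\theta$-weighted combination at $t_n$ and $t_{n-1}$, and subtract from the full-discrete scheme~(\ref{full-scheme}), using $(Q_0w,v_0)=(w,v_0)$ and linearity of $\WL$ to split off the $e^n$, time-truncation, and $\WL(u-Q_hu)$ contributions. The only cosmetic difference is that the paper rewrites $(\WL u,\WL v)=(\WL Q_hu,\WL v)+(\WL(u-Q_hu),\WL v)$ before forming the $\theta$-combination, whereas you substitute $U^n=e^n+Q_hu(t_n)$ after subtracting; the two computations are line-by-line equivalent.
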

\begin{proof}
  From the proof of the Theorem \ref{theorem-semi-err-equ}, we know 
  \begin{align*}
    (f,v)_{\T_h}=&~(Q_0u_t,v_0)_{\T_h}+(\Delta _w v,\Delta _w u)_{\T_h}+l_1(u,v)-l_2(u,v)\\
    =&~(u_t,v_0)_{\T_h}+(\Delta _w Q_hu,\Delta _w v)_{\T_h}+l_1(u,v)-l_2(u,v)+(\Delta _w (u-Q_hu),\Delta _w v)_{\T_h}
  \end{align*}
  which implies
  \begin{align*}
    (\Delta _w (\theta Q_hu(t_n)+(1-\theta) Q_hu(t_{n-1})),\Delta _w v)=&~(\theta f(t_n)+(1-\theta)f(t_{n-1}),v_0)-(\theta u_t(t_n)+(1-\theta)u_t(t_{n-1}),v_0)\\
    &~-l_1(\theta u(t_n)+(1-\theta)u(t_{n-1}),v)+l_2(\theta u(t_n)+(1-\theta)u(t_{n-1}),v)\\
    &~-(\Delta _w (\theta (u-Q_hu)(t_n)+(1-\theta)(u-Q_hu)(t_{n-1})),\Delta _w v).
  \end{align*}
  
  Combing with (\ref{full-scheme}), we get
  \begin{align*}
    &~(\overline{\partial}(U^n-Q_hu(t_n)),v_0)+(\WL (\theta (U^n-Q_hu(t_n))+(1-\theta)(U^{n-1}-Q_hu(t_{n-1}))),\WL v)\\
    =&~(\theta f(t_n)+(1-\theta)f(t_{n-1}),v_0)-\Big[(\overline{\partial}u(t_n),v_0)+(\Delta _w (\theta Q_hu(t_n)+(1-\theta) Q_hu(t_{n-1})),\Delta _w v)\Big]\\
    =&~(\theta u_t(t_n)+(1-\theta)u_t(t_{n-1})-\overline{\partial}u(t_n),v_0)+l_1(\theta u(t_n)+(1-\theta)u(t_{n-1}),v)-l_2(\theta u(t_n)+(1-\theta)u(t_{n-1}),v)\\
    &~+(\Delta _w (\theta (u-Q_hu)(t_n)+(1-\theta)(u-Q_hu)(t_{n-1})),\Delta _w v),
  \end{align*}
  which show the (\ref{full-err-equ}).
\end{proof}

\begin{theorem}
  Let $u\in H^{\max{\{k+1,4\}}}(\Omega)$, and we have the error estimations:
  When $\theta\in(\frac{1}{2},1]$,
  \begin{align}\label{err-equ-full-L2}
    &~\norm{e^n}^2+\tau\sum_{j=1}^n\trb{\theta e^j+(1-\theta)e^{j-1}}^2\\ \nonumber
    \leq &~\norm{e^{0}}^2+Ch^{2(k-1)}\norm{u}_{\m,\infty}^2+2\tau ^2\Big(\int _{0}^{t_n}\norm{u_{ss}}^2ds\Big).
  \end{align}
  In particular, when $\theta =1$, we have 
\begin{align}\label{err-equ-full-H2}
  \trb{e^n}^2
  \leq &~C\Big(\norm{e^0}^2+\trb{e^0}^2\Big)+Ch^{2(k-1)}\Big(\norm{u(0)}_{\m}^2+\norm{u}_{\m,\infty}^2+\norm{u_t}_{\m,\infty}^2\\
  &~+\int_{0}^{t_n}\norm{u_{ss}}_{\m}^2ds\Big)+\tau^2 C\int _{0}^{t_n}\norm{u_{ss}}^2ds.
\end{align}

  When $\theta =\frac{1}{2}$, we obtain
  \begin{align}\label{err-equ-full-L2-2}
    &~\norm{e^n}^2+\tau\sum_{j=1}^n\trb{\theta e^j+(1-\theta)e^{j-1}}^2\\ \nonumber
    \leq &~\norm{e^{0}}^2+Ch^{2(k-1)}\norm{u}_{\m,\infty}^2+\frac{\tau ^4}{2}\Big(\int _{0}^{t_n}\norm{u_{sss}}^2ds\Big),
  \end{align}
  and
  \begin{align}\label{err-equ-full-H2-2}
    \trb{e^n}^2
    \leq &~C\Big(\norm{e^0}^2+\trb{e^0}^2\Big)+Ch^{2(k-1)}\Big(\norm{u(0)}_{\m}^2+\norm{u}_{\m,\infty}^2+\norm{u_t}_{\m,\infty}^2\\
    &~+\int_{0}^{t_n}\norm{u_{ss}}_{\m}^2ds\Big)+\tau^4 C\int _{0}^{t_n}\norm{u_{sss}}^2ds,
  \end{align}
  where $\norm{u}_{\m,\infty}=\max _{0\leq j\leq n}\norm{u(t_j)}_{\m}$ and $\norm{u_t}_{\m,\infty}=\max _{0\leq j\leq n}\norm{u_t(t_j)}_{\m}$.
\end{theorem}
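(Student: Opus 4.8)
The plan is to derive all four estimates from the full-discrete error equation (\ref{full-err-equ}) of Theorem \ref{theorem-full-err-equ} by repeating, at the discrete level, the energy arguments that produced the semi-discrete bounds (\ref{semi-L2})--(\ref{semi-trb}): the time derivative is replaced by the difference quotient $\overline{\partial}e^n$, and integration by parts in $t$ is replaced by summation by parts (Abel summation) in $n$. Throughout I abbreviate $w^n:=\theta u(t_n)+(1-\theta)u(t_{n-1})$ and $\rho^n:=\theta(u-Q_hu)(t_n)+(1-\theta)(u-Q_hu)(t_{n-1})$.

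\textbf{The $L^2$-type bounds (\ref{err-equ-full-L2}), (\ref{err-equ-full-L2-2}).} First I would take $v=\theta e^n+(1-\theta)e^{n-1}\in V_h^0$ in (\ref{full-err-equ}). Using the algebraic identity $\theta e^n+(1-\theta)e^{n-1}=(\theta-\tfrac12)(e^n-e^{n-1})+\tfrac12(e^n+e^{n-1})$ --- exactly as in the uniqueness proof of (\ref{full-scheme}) --- the quantity $\tau(\overline{\partial}e^n,v)$ equals $\tfrac12(\norm{e^n}^2-\norm{e^{n-1}}^2)+(\theta-\tfrac12)\norm{e^n-e^{n-1}}^2$, and the last, nonnegative, summand (here $\theta\in[\tfrac12,1]$) is simply dropped. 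On the right-hand side of (\ref{full-err-equ}): the time-consistency term $(\theta u_t(t_n)+(1-\theta)u_t(t_{n-1})-\overline{\partial}u(t_n),v_0)$ is bounded by $\norm{v_0}\le C\trb{v}$ (Lemma \ref{lemma-L2-trb}) times the truncation error, for which a Taylor expansion with integral remainder of $u$ about $t_n$ (when $\theta\in(\tfrac12,1]$) gives the pointwise bound $\le\tau^{1/2}(\int_{t_{n-1}}^{t_n}\norm{u_{ss}}^2ds)^{1/2}$, whereas expanding about the midpoint $t_{n-1}+\tfrac{\tau}{2}$ (when $\theta=\tfrac12$, where the $u_{tt}$-terms cancel) gives $\le C\tau^{3/2}(\int_{t_{n-1}}^{t_n}\norm{u_{sss}}^2ds)^{1/2}$; the terms $l_1(w^n,v)$, $l_2(w^n,v)$ are bounded by (\ref{l1-est})--(\ref{l2-est}) and the projection term $(\WL\rho^n,\WL v)$ by Cauchy--Schwarz and (\ref{trb-u-Qhu}), all against $\trb{v}$. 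A Young inequality then absorbs every $\trb{v}$ into the $\tau\trb{\theta e^n+(1-\theta)e^{n-1}}^2$ already on the left; crucially, no $\norm{e^j}$ survives on the right, so no Gronwall argument is needed. Summing over $n$, the $\norm{e^n}^2$ telescopes and $\tau\sum_n\int_{t_{n-1}}^{t_n}=\int_0^{t_n}$, which gives (\ref{err-equ-full-L2}) for $\theta\in(\tfrac12,1]$ and (\ref{err-equ-full-L2-2}) for $\theta=\tfrac12$, after also using $\tau\sum_j\norm{u(t_j)}_{\m}^2\le\overline{t}\,\norm{u}_{\m,\infty}^2$.

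\textbf{The $\trb{\cdot}$ bounds (\ref{err-equ-full-H2}), (\ref{err-equ-full-H2-2}).} For $\theta=1$ and $\theta=\tfrac12$ I would instead take $v=\overline{\partial}e^n\in V_h^0$ in (\ref{full-err-equ}), multiply by $\tau$ and sum $n=1,\dots,m$. The elliptic term telescopes: for $\theta=\tfrac12$, $(\WL\tfrac{e^n+e^{n-1}}{2},\WL\overline{\partial}e^n)=\tfrac1{2\tau}(\trb{e^n}^2-\trb{e^{n-1}}^2)$, and for $\theta=1$, $(\WL e^n,\WL\overline{\partial}e^n)\ge\tfrac1{2\tau}(\trb{e^n}^2-\trb{e^{n-1}}^2)$; the $L^2$ term contributes $\tau\sum_n\norm{\overline{\partial}e^n}^2$, against which the time-consistency term (paired with $\overline{\partial}e_0^n$) is absorbed by Young. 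The delicate point is that $l_1(w^n,\cdot)$, $l_2(w^n,\cdot)$ and $(\WL\rho^n,\WL\,\cdot)$ are now evaluated at $\overline{\partial}e^n$, and $\trb{\overline{\partial}e^n}=\tau^{-1}\trb{e^n-e^{n-1}}$ cannot be controlled directly; so, just as the semi-discrete proof rewrote $\langle\cdots,e_{0,t}-e_{b,t}\rangle=\tfrac{d}{dt}\langle\cdots,e_0-e_b\rangle-\langle(\cdots)_t,e_0-e_b\rangle$, I would apply summation by parts in $n$ to each of these sums. The resulting boundary contributions at $n=m$ and $n=0$ are estimated, via Lemma \ref{lemma-norm-equ} and the definition of $\norm{\cdot}_{2,h}$, by $\epsilon\trb{e^m}^2+\epsilon\trb{e^0}^2+Ch^{2(k-1)}(\norm{u(0)}_{\m}^2+\norm{u}_{\m,\infty}^2+\norm{u_t}_{\m,\infty}^2)$ with $\epsilon$ small enough to be absorbed on the left, while the remaining sums involve only discrete time differences of $w^n$ and $\rho^n$ (hence quantities of the form $h^{2(k-1)}\int_0^{t_m}\norm{u_s}_{\m}^2ds$) paired with $\trb{e^{n-1}}$, i.e.\ a term $\le\epsilon\,\tau\sum_n\trb{e^{n-1}}^2+C(\cdots)$; the factor $\tau\sum_n\trb{e^{n-1}}^2$ is then bounded by the already-proved $L^2$-type estimate (\ref{err-equ-full-L2}) (resp.\ (\ref{err-equ-full-L2-2})). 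Collecting terms and keeping the sharp truncation order ($\tau^2$ for $\theta=1$, $\tau^4$ for $\theta=\tfrac12$) gives (\ref{err-equ-full-H2}) and (\ref{err-equ-full-H2-2}).

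\textbf{The main obstacle.} The algebra with the $\theta$-identity and the Cauchy--Schwarz/Young bookkeeping is routine. The two genuinely delicate steps are: (i) extracting the sharp order of the time-truncation error --- in particular the cancellation at $\theta=\tfrac12$ that upgrades $O(\tau)$ to $O(\tau^2)$ --- by choosing the correct expansion point and keeping the remainder in integral form, so that $\tau\sum_n(\text{truncation})^2$ telescopes cleanly into $\tau^2\int\norm{u_{ss}}^2$ or $\tau^4\int\norm{u_{sss}}^2$; and (ii) the discrete summation-by-parts in time in the $\trb{\cdot}$ estimate, the analogue of the time integration-by-parts used in the semi-discrete proof, which is precisely what prevents the uncontrollable quantity $\trb{\overline{\partial}e^n}$ from appearing.
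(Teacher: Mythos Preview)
Your proposal is correct and follows essentially the same route as the paper: the same two test functions ($v=\theta e^n+(1-\theta)e^{n-1}$ for the $L^2$-type bounds, $v=\overline{\partial}e^n$ for the $\trb{\cdot}$ bounds), the same algebraic identity $\theta e^n+(1-\theta)e^{n-1}=(\theta-\tfrac12)(e^n-e^{n-1})+\tfrac12(e^n+e^{n-1})$, the same integral-remainder Taylor bounds for the truncation error, and the same discrete summation-by-parts in $n$ to avoid $\trb{\overline{\partial}e^n}$, followed by feeding the already-proved $L^2$-type estimate back in. One point the paper makes explicit and that you only implicitly rely on: after the summation by parts the interior terms are paired with $\theta e^{n-1}+(1-\theta)e^n$ (indices \emph{swapped}), which coincides with the combination $\theta e^n+(1-\theta)e^{n-1}$ controlled by (\ref{err-equ-full-L2})/(\ref{err-equ-full-L2-2}) only when $\theta\in\{\tfrac12,1\}$ --- this is exactly why the $\trb{\cdot}$ estimate is stated only for those two values.
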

\begin{proof}
  Selecting $v=\theta e^n+(1-\theta)e^{n-1}$ in (\ref{full-err-equ}), we have
  \begin{align*}
    &~(\overline{\partial}e^n,\theta e^n+(1-\theta)e^{n-1})+(\WL (\theta e^n+(1-\theta)e^{n-1}),\WL (\theta e^n+(1-\theta)e^{n-1}))\\ 
    =&~(\theta u_t(t_n)+(1-\theta)u_t(t_{n-1})-\overline{\partial}u(t_n),\theta e^n+(1-\theta)e^{n-1})+l_1(\theta u(t_n)+(1-\theta)u(t_{n-1}),\theta e^n+(1-\theta)e^{n-1})\\ 
    &~-l_2(\theta u(t_n)+(1-\theta)u(t_{n-1}),\theta e^n+(1-\theta)e^{n-1})\\
    &~+(\Delta _w (\theta (u-Q_hu)(t_n)+(1-\theta)(u-Q_hu)(t_{n-1})),\Delta _w (\theta e^n+(1-\theta)e^{n-1})).
  \end{align*}

  Since $(\overline{\partial}e^n,\theta e^n+(1-\theta)e^{n-1})=\frac{1}{\tau}(\theta -\frac{1}{2})\norm{e^n-e^{n-1}}^2+\frac{1}{2\tau}\norm{e^n}^2-\frac{1}{2\tau}\norm{e^{n-1}}^2$ and the Cauchy-Schwarz inequality, we know
  \begin{align*}
    &~\frac{1}{\tau}(\theta -\frac{1}{2})\norm{e^n-e^{n-1}}^2+\frac{1}{2\tau}\norm{e^n}^2-\frac{1}{2\tau}\norm{e^{n-1}}^2+ \trb{\theta e^n+(1-\theta)e^{n-1}}^2\\
    \leq &~\norm{\theta u_t(t_n)+(1-\theta)u_t(t_{n-1})-\overline{\partial}u(t_n)}\norm{\theta e^n+(1-\theta)e^{n-1}}\\
    &~+|l_1(\theta u(t_n)+(1-\theta)u(t_{n-1}),\theta e^n+(1-\theta)e^{n-1})|\\ 
    &~+|l_2(\theta u(t_n)+(1-\theta)u(t_{n-1}),\theta e^n+(1-\theta)e^{n-1})|\\
    &~+\trb{\theta (u-Q_hu)(t_n)+(1-\theta)(u-Q_hu)(t_{n-1})}\trb{\theta e^n+(1-\theta)e^{n-1}}.
  \end{align*}

  When $\theta\in (\frac{1}{2},1]$,
  \begin{align}
    \label{part_1}&\norm{\OP{u(t_n)-(\theta u_t(t_n)+(1-\theta)u_t(t_{n-1}))}}^2\\ \nonumber
    =&\int _\Omega\frac{1}{\tau ^2}\Big(\int _{t_{n-1}}^{t_n}(s-(1-\theta)t_n-\theta t_{n-1})u_{ss}ds\Big)^2dx\\ \nonumber
    \leq&\int _\Omega\frac{1}{\tau ^2}\Big(\int _{t_{n-1}}^{t_n}(s-(1-\theta)t_n-\theta t_{n-1})^2ds\Big)\Big(\int _{t_{n-1}}^{t_n}u_{ss}^2ds\Big)dx\\ \nonumber
    \leq&\int _\Omega \theta ^2\tau\int_{t_{n-1}}^{t_n}u_{ss}^2dsdx\\ \nonumber
    \leq& \theta ^2\tau\int _{t_{n-1}}^{t_n}\norm{u_{ss}}^2ds\\ \nonumber
    \leq& \tau\int _{t_{n-1}}^{t_n}\norm{u_{ss}}^2ds, \nonumber
  \end{align}
  and when $\theta = \frac{1}{2}$,
  \begin{align}
    \label{part_2}&\norm{\OP{u(t_n)-\frac{1}{2}(u_t(t_n)+u_t(t_{n-1}))}}^2\\ \nonumber
    =&\int _\Omega\frac{1}{4\tau ^2}\Big(\int _{t_{n-1}}^{t_n}(t_n-s)(t_{n-1}-s)u_{sss}ds\Big)^2dx\\ \nonumber
    \leq&\int _\Omega\frac{1}{4\tau ^2}\Big(\int_{t_{n-1}}^{t_n}(t_n-s)^2(t_{n-1}-s)^2ds\Big)\Big(\int _{t_{n-1}}^{t_n}u_{sss}^2ds\Big)dx\\ \nonumber
    \leq&\frac{1}{4}\tau ^3\int _\Omega\int _{t_{n-1}}^{t_n}u_{sss}^2dsdx\\ \nonumber
    \leq&\frac{1}{4}\tau ^3\int _{t_{n-1}}^{t_n}\norm{u_{sss}}^2ds. \nonumber
  \end{align}

  By (\ref{l1-est}-\ref{l2-est}), we have
  \begin{align*}
    &~|l_1(\theta u(t_n)+(1-\theta)u(t_{n-1}),\theta e^n+(1-\theta)e^{n-1})|\\
    \leq &~Ch^{k-1+\delta _{k,2}}\Big(\theta \norm{u(t_n)}_{\m}+(1-\theta)\norm{u(t_{n-1})}_{\m}\Big)\trb{\theta e^n+(1-\theta)e^{n-1}}\\
    &~|l_2(\theta u(t_n)+(1-\theta)u(t_{n-1}),\theta e^n+(1-\theta)e^{n-1})|\\
    \leq &~Ch^{k-1+\delta _{k,2}}\Big(\theta \norm{u(t_n)}_{\m}+(1-\theta)\norm{u(t_{n-1})}_{\m}\Big)\trb{\theta e^n+(1-\theta)e^{n-1}}
  \end{align*}

  Using (\ref{trb-u-Qhu}), we get
  \begin{align*}
    &~\trb{\theta (u-Q_hu)(t_n)+(1-\theta)(u-Q_hu)(t_{n-1})}\\
    \leq &~\theta\trb{(u-Q_hu)(t_n)}+(1-\theta)\trb{(u-Q_hu)(t_{n-1})}\\
    \leq &~Ch^{k-1}\Big(\theta\norm{u(t_n)}_{k+1}+(1-\theta)\norm{u(t_{n-1})}_{k+1}\Big)
  \end{align*}

  Therefore, when $\theta \in(\frac{1}{2},1]$, there is 
  \begin{align*}
    &~\frac{1}{\tau}(\theta -\frac{1}{2})\norm{e^n-e^{n-1}}^2+\frac{1}{2\tau}\norm{e^n}^2-\frac{1}{2\tau}\norm{e^{n-1}}^2+ \trb{\theta e^n+(1-\theta)e^{n-1}}^2\\
    \leq &~\tau ^{\frac{1}{2}}\Big(\int _{t_{n-1}}^{t_n}\norm{u_{ss}}^2ds\Big)^{\frac{1}{2}}\norm{\theta e^n+(1-\theta)e^{n-1}}+Ch^{k-1}\Big(\theta\norm{u(t_n)}_{\m}+(1-\theta)\norm{u(t_{n-1})}_{\m}\Big)\\
    &~\trb{\theta e^n+(1-\theta)e^{n-1}}\\
    \leq &~\frac{\varepsilon\tau}{4}\Big(\int _{t_{n-1}}^{t_n}\norm{u_{ss}}^2ds\Big)+\frac{1}{\varepsilon}\norm{\theta e^n+(1-\theta)e^{n-1}}^2+\varepsilon Ch^{2(k-1)}\Big(\theta\norm{u(t_n)}_{\m}+(1-\theta)\norm{u(t_{n-1})}_{\m}\Big)^2\\
    &~+\frac{1}{\varepsilon}\trb{\theta e^n+(1-\theta)e^{n-1}}^2\\
    \leq &~\varepsilon Ch^{2(k-1)}\Big(\theta\norm{u(t_n)}_{\m}+(1-\theta)\norm{u(t_{n-1})}_{\m}\Big)^2+\frac{\varepsilon\tau}{4}\Big(\int _{t_{n-1}}^{t_n}\norm{u_{ss}}^2ds\Big)\\
    &~+\frac{2}{\varepsilon}\trb{\theta e^n+(1-\theta)e^{n-1}}^2,
  \end{align*}
  where we use the Young's inequality and (\ref{L2-trb}).

  Setting $\varepsilon = 4$, we have
  \begin{align*}
    &~\frac{1}{\tau}(\theta -\frac{1}{2})\norm{e^n-e^{n-1}}^2+\frac{1}{2\tau}\norm{e^n}^2-\frac{1}{2\tau}\norm{e^{n-1}}^2+\frac{1}{2} \trb{\theta e^n+(1-\theta)e^{n-1}}^2\\
    \leq &~Ch^{2(k-1)}\Big(\theta\norm{u(t_n)}_{\m}+(1-\theta)\norm{u(t_{n-1})}_{\m}\Big)^2+\tau\Big(\int _{t_{n-1}}^{t_n}\norm{u_{ss}}^2ds\Big),
  \end{align*}
  which implies 
  \begin{align*}
    &~\norm{e^n}^2+\tau\trb{\theta e^n+(1-\theta)e^{n-1}}^2\\
    \leq &~\norm{e^{n-1}}^2+\tau Ch^{2(k-1)}\Big(\theta\norm{u(t_n)}_{\m}+(1-\theta)\norm{u(t_{n-1})}_{\m}\Big)^2+2\tau ^2\Big(\int _{t_{n-1}}^{t_n}\norm{u_{ss}}^2ds\Big).
  \end{align*}

  Therefore, we get
  \begin{align*}
    &~\norm{e^n}^2+\tau\sum_{j=1}^n\trb{\theta e^j+(1-\theta)e^{j-1}}^2\\
    \leq &~\norm{e^{0}}^2+\tau Ch^{2(k-1)}\Big(\theta \sum _{j=1}^n\norm{u(t_j)}_{\m}+(1-\theta)\sum_{j=1}^n\norm{u(t_{j-1})}_{\m}\Big)^2+2\tau ^2\Big(\int _{0}^{t_n}\norm{u_{ss}}^2ds\Big)\\
    \leq &~\norm{e^{0}}^2+\tau Ch^{2(k-1)}\Big(\sum _{j=0}^n\norm{u(t_j)}_{\m}\Big)^2+2\tau ^2\Big(\int _{0}^{t_n}\norm{u_{ss}}^2ds\Big)\\
    \leq &~\norm{e^{0}}^2+(n+1)\tau Ch^{2(k-1)}\norm{u}_{\m,\infty}^2+2\tau ^2\Big(\int _{0}^{t_n}\norm{u_{ss}}^2ds\Big)\\
    \leq &~\norm{e^{0}}^2+Ch^{2(k-1)}\norm{u}_{\m,\infty}^2+2\tau ^2\Big(\int _{0}^{t_n}\norm{u_{ss}}^2ds\Big).
  \end{align*}

  When $\theta =\frac{1}{2}$, similarly, we obtain
  \begin{align*}
    &~\norm{e^n}^2+\tau\sum_{j=1}^n\trb{\theta e^j+(1-\theta)e^{j-1}}^2\\
    \leq &~\norm{e^{0}}^2+Ch^{2(k-1)}\norm{u}_{\m,\infty}^2+\frac{\tau ^4}{2}\Big(\int _{0}^{t_n}\norm{u_{sss}}^2ds\Big).
  \end{align*}

  Choosing $v=\overline{\partial}e^n$ in (\ref{full-err-equ}), we know
  \begin{align*}
    &~\norm{\overline\partial e^n}^2+(\WL (\theta e^n+(1-\theta)e^{n-1}),\WL\overline\partial e^n)\\
    =&~(\theta u_t(t_n)+(1-\theta)u_t(t_{n-1})-\overline{\partial}u(t_n),\overline\partial e^n)+l_1(\theta u(t_n)+(1-\theta)u(t_{n-1}),\overline\partial e^n)\\
    &~-l_2(\theta u(t_n)+(1-\theta)u(t_{n-1}),\overline\partial e^n)
    +(\Delta _w (\theta (u-Q_hu)(t_n)+(1-\theta)(u-Q_hu)(t_{n-1})),\Delta _w \overline\partial e^n).
  \end{align*}

  From $\theta e^n+(1-\theta)e^{n-1}=(\theta -\frac{1}{2})(e^n-e^{n-1})+\frac{1}{2}(e^n+e^{n-1})$, we have
  \begin{align}
    \label{left}&~\norm{\overline\partial e^n}^2+(\WL (\theta e^n+(1-\theta)e^{n-1}),\WL\overline\partial e^n)\\ \nonumber
    =&~\frac{1}{\tau ^2}\norm{e^n-e^{n-1}}^2+\frac{1}{\tau}(\theta -\frac{1}{2})\trb{e^n-e^{n-1}}^2+\frac{1}{2\tau}\Big(\trb{e^n}^2-\trb{e^{n-1}}^2\Big).\nonumber
  \end{align}

  For simplicity, we use the following symbols
  \begin{align*}
    w_{1,T}^n=&~[\G (\Delta u(t_n))-\G (\dQ_h\Delta u(t_n))]\cdot\bn|_{\partial T},\\
    e_{1,T}^n=&~(e_0(t_n)-e_b(t_n))|_{\partial T},\\
    w_{2,T}^n=&~[\Delta u(t_n)-\dQ_h\Delta u(t_n)]|_{\partial T},\\
    e_{2,T}^n=&~(\G e_0-e_n\bn_e)\cdot\bn|_{\partial T},\\
    w_{3,T}^n=&~\WL ((u-Q_hu)(t_n))|_T,\\
    e_{3,T}^n=&~\WL e(t_n)|_T.
  \end{align*}

  Therefore, we have
  \begin{align}
    \label{right}&~(\theta u_t(t_n)+(1-\theta)u_t(t_{n-1})-\overline{\partial}u(t_n),\overline\partial e^n)+l_1(\theta u(t_n)+(1-\theta)u(t_{n-1}),\overline\partial e^n)\\ \nonumber
    &~-l_2(\theta u(t_n)+(1-\theta)u(t_{n-1}),\overline\partial e^n)
    +(\Delta _w (\theta (u-Q_hu)(t_n)+(1-\theta)(u-Q_hu)(t_{n-1})),\Delta _w \overline\partial e^n)\\ \nonumber
    =&~\frac{1}{\tau}(\theta u_t(t_n)+(1-\theta)u_t(t_{n-1})-\overline{\partial}u(t_n),e^n-e^{n-1})+\theta\sumT\langle w_{1,T}^n,\overline\partial e_{1,T}^n\rangle_{\partial T}+(1-\theta)\sumT\langle w_{1,T}^{n-1},\overline\partial e_{1,T}^n\rangle_{\partial T}\\ \nonumber
    &~-\theta\sumT\langle w_{2,T}^n,\overline\partial e_{2,T}^n\rangle_{\partial T}-(1-\theta)\sumT\langle w_{2,T}^{n-1},\overline\partial e_{2,T}^n\rangle_{\partial T}\\ \nonumber
    &~+\theta\sumT (w_{3,T}^n,\overline\partial e_{3,T}^n)_{T}+(1-\theta)\sumT (w_{3,T}^{n-1},\overline\partial e_{3,T}^n)_{T}\\ \nonumber
    \leq &~\frac{1}{\tau}\norm{\theta u_t(t_n)+(1-\theta)u_t(t_{n-1})-\overline{\partial}u(t_n)}\norm{e^n-e^{n-1}}\\ \nonumber
    &~+\theta\sumT \Big[\overline\partial\langle w_{1,T}^n,e_{1,T}^n\rangle _{\partial T}+\langle (w_{1,T}^n)_t-\overline\partial w_{1,T}^n,e_{1,T}^{n-1}\rangle _{\partial T}-\langle (w_{1,T}^n)_t,e_{1,T}^{n-1}\rangle _{\partial T}\Big]\\ \nonumber
    &~+(1-\theta)\sumT \Big[\overline\partial\langle w_{1,T}^n,e_{1,T}^n\rangle _{\partial T}+\langle (w_{1,T}^n)_t-\overline\partial w_{1,T}^n,e_{1,T}^{n}\rangle _{\partial T}-\langle (w_{1,T}^n)_t,e_{1,T}^{n}\rangle _{\partial T}\Big]\\ \nonumber
    &~-\theta\sumT \Big[\overline\partial\langle w_{2,T}^n,e_{2,T}^n\rangle _{\partial T}+\langle (w_{2,T}^n)_t-\overline\partial w_{2,T}^n,e_{2,T}^{n-1}\rangle _{\partial T}-\langle (w_{2,T}^n)_t,e_{2,T}^{n-1}\rangle _{\partial T}\Big]\\ \nonumber
    &~-(1-\theta)\sumT \Big[\overline\partial\langle w_{2,T}^n,e_{2,T}^n\rangle _{\partial T}+\langle (w_{2,T}^n)_t-\overline\partial w_{2,T}^n,e_{2,T}^{n}\rangle _{\partial T}-\langle (w_{2,T}^n)_t,e_{2,T}^{n}\rangle _{\partial T}\Big]\\ \nonumber
    &~+\theta\sumT \Big[\overline\partial (w_{3,T}^n,e_{3,T}^n)_{T}+((w_{3,T}^n)_t-\overline\partial w_{3,T}^n,e_{3,T}^{n-1})_{T}-((w_{3,T}^n)_t,e_{3,T}^{n-1})_{T}\Big]\\ \nonumber
    &~+(1-\theta)\sumT \Big[\overline\partial (w_{3,T}^n,e_{3,T}^n) _{T}+((w_{3,T}^n)_t-\overline\partial w_{3,T}^n,e_{3,T}^{n})_{ T}- ((w_{3,T}^n)_t,e_{3,T}^{n})_{T}\Big]\\ \nonumber
    \leq &~\frac{1}{\tau}\Big(\frac{1}{4\varepsilon}\norm{\theta u_t(t_n)+(1-\theta)u_t(t_{n-1})-\overline{\partial}u(t_n)}^2+\varepsilon \norm{e^n-e^{n-1}}^2\Big)\\ \nonumber
    &~+\sumT \Big[\overline\partial\langle w_{1,T}^n,e_{1,T}^n\rangle _{\partial T}+\langle (w_{1,T}^n)_t-\overline\partial w_{1,T}^n,\theta e_{1,T}^{n-1}+(1-\theta)e_{1,T}^{n}\rangle _{\partial T}-\langle (w_{1,T}^n)_t,\theta e_{1,T}^{n-1}+(1-\theta)e_{1,T}^{n}\rangle _{\partial T}\Big]\\ \nonumber
    &~-\sumT \Big[\overline\partial\langle w_{2,T}^n,e_{2,T}^n\rangle _{\partial T}+\langle (w_{2,T}^n)_t-\overline\partial w_{2,T}^n,\theta e_{2,T}^{n-1}+(1-\theta)e_{2,T}^{n}\rangle _{\partial T}-\langle (w_{2,T}^n)_t,\theta e_{2,T}^{n-1}+(1-\theta)e_{2,T}^{n}\rangle _{\partial T}\Big]\\ \nonumber
    &~+ \sumT \Big[\overline\partial (w_{3,T}^n,e_{3,T}^n)_{T}+((w_{3,T}^n)_t-\overline\partial w_{3,T}^n,\theta e_{3,T}^{n-1}+(1-\theta)e_{3,T}^{n})_{T}-((w_{3,T}^n)_t,\theta e_{3,T}^{n-1}+(1-\theta)e_{3,T}^{n})_{T}\Big] \nonumber
  \end{align}
  where we use 
  \begin{align*}
    \langle w_{1,T}^n,\overline\partial e_{1,T}^n\rangle_{\partial T}=&~\overline\partial\langle w_{1,T}^n,e_{1,T}^n\rangle _{\partial T}+\langle (w_{1,T}^n)_t-\overline\partial w_{1,T}^n,e_{1,T}^{n-1}\rangle _{\partial T}-\langle (w_{1,T}^n)_t,e_{1,T}^{n-1}\rangle _{\partial T},\\
    \langle w_{1,T}^{n-1},\overline\partial e_{1,T}^n\rangle_{\partial T}=&~\overline\partial\langle w_{1,T}^n,e_{1,T}^n\rangle _{\partial T}+\langle (w_{1,T}^n)_t-\overline\partial w_{1,T}^n,e_{1,T}^{n}\rangle _{\partial T}-\langle (w_{1,T}^n)_t,e_{1,T}^{n}\rangle _{\partial T}.
  \end{align*}
  and the similar results for $\langle w_{2,T}^n,\overline\partial e_{2,T}^n\rangle_{\partial T}$, $\langle w_{2,T}^{n-1},\overline\partial e_{2,T}^n\rangle_{\partial T}$, $(w_{3,T}^n,\overline\partial e_{3,T}^n)_{T}$, $(w_{3,T}^{n-1},\overline\partial e_{3,T}^n)_{T}$.

  In addition, from the Cauchy-Schwarz inequality, (\ref{grad-laplace-u}), the definition of $\norm{\cdot}_{2,h}$, the Lemma \ref{lemma-norm-equ}, (\ref{part_1}), the Young's inequality and (\ref{grad-laplace-u}), we have 
  \begin{align*}
    &~\sumT \langle (w_{1,T}^n)_t-\overline\partial w_{1,T}^n,\theta e_{1,T}^{n-1}+(1-\theta)e_{1,T}^{n}\rangle _{\partial T}\\
    \leq &~\Big(\sumT h_T^3\norm{(w_{1,T}^n)_t-\overline\partial w_{1,T}^n}_{\partial T}^2\Big)^{\frac{1}{2}}\Big(\sumT h_T^{-3}\norm{\theta e_{1,T}^{n-1}+(1-\theta)e_{1,T}^{n}}_{\partial T}^2\Big)^{\frac{1}{2}}\\
    \leq &~\Big(\sumT h_T^3\norm{(w_{1,T}^n)_t-\overline\partial w_{1,T}^n}_{\partial T}^2\Big)^{\frac{1}{2}}\trb{\theta e^{n-1}+(1-\theta)e^{n}}\\
    \leq &~\Big(C\tau\sumT h_T^3\int _{t_{n-1}}^{t_n}\norm{w_{1,ss}}_{\partial T}^2ds\Big)^{\frac{1}{2}}\trb{\theta e^{n-1}+(1-\theta)e^{n}}\\
    \leq &~\frac{1}{4\varepsilon _{11}}C\tau \sumT h_T^3\int _{t_{n-1}}^{t_n}\norm{w_{1,ss}}_{\partial T}^2ds +\varepsilon _{11}\trb{\theta e^{n-1}+(1-\theta)e^{n}}^2\\
    \leq &~\frac{1}{4\varepsilon _{11}}C\tau h^{2(k-1+\delta _{k,2})}\int_{t_{n-1}}^{t_n}\norm{u_{ss}}_{\m}^2ds+\varepsilon _{11}\trb{\theta e^{n-1}+(1-\theta)e^{n}}^2,
  \end{align*}
  \\
  \begin{align*}
    &~\sumT \langle (w_{1,T}^n)_t,\theta e_{1,T}^{n-1}+(1-\theta)e_{1,T}^{n}\rangle _{\partial T}\\
    \leq &~\Big(\sumT h_T^3\norm{(w_{1,T}^n)_t}_{\partial T}^2\Big)^{\frac{1}{2}}\Big(\sumT h_T^{-3}\norm{\theta e_{1,T}^{n-1}+(1-\theta)e_{1,T}^{n}}_{\partial T}^2\Big)^{\frac{1}{2}}\\
    \leq &~\Big(\sumT h_T^3\norm{(w_{1,T}^n)_t}_{\partial T}^2\Big)^{\frac{1}{2}}\trb{\theta e^{n-1}+(1-\theta)e^{n}}\\
    \leq &~\Big(\sumT h_T^3\norm{[\G (\Delta u_t(t_n))-\G (\dQ_h\Delta u_t(t_n))]\cdot\bn}_{\partial T}^2\Big)^{\frac{1}{2}}\trb{\theta e^{n-1}+(1-\theta)e^{n}}\\
    \leq &~Ch^{k-1+\delta _{k,2}}\norm{u_t(t_n)}_{\m}\trb{\theta e^{n-1}+(1-\theta)e^{n}}\\
    \leq &~\frac{1}{4\varepsilon _{12}}Ch^{2(k-1+\delta _{k,2})}\norm{u_t(t_n)}_{\m}^2+\varepsilon _{12}\trb{\theta e^{n-1}+(1-\theta)e^{n}}^2,
  \end{align*}
    and
  \begin{align*}
    &~\sumT\langle w_{1,T}^n,e_{1,T}^n\rangle _{\partial T}\\
    \leq &~\Big(\sumT h_T^{3} \norm{w_{1,T}^n}_{\partial T}^2\Big)^{\frac{1}{2}}\Big(\sumT h_T^{-3} \norm{e_{1,T}^n}_{\partial T}^2\Big)^{\frac{1}{2}}\\
    \leq &~\Big(\sumT h_T^3\norm{[\G (\Delta u(t_n))-\G (\dQ_h\Delta u(t_n))]\cdot\bn}_{\partial T}^2\Big)^{\frac{1}{2}}\trb{e^{n}}\\
    \leq &~Ch^{k-1+\delta _{k,2}}\norm{u(t_n)}_{\m}\trb{e^n}
  \end{align*}
  In a similar way, we have
  \begin{align*}
    &~\sumT \langle (w_{2,T}^n)_t-\overline\partial w_{2,T}^n,\theta e_{2,T}^{n-1}+(1-\theta)e_{2,T}^{n}\rangle _{\partial T}\\
    \leq &~\frac{1}{4\varepsilon _{21}}C\tau h^{2(k-1+\delta _{k,2})}\int_{t_{n-1}}^{t_n}\norm{u_{ss}}_{\m}^2ds+\varepsilon _{21}\trb{\theta e^{n-1}+(1-\theta)e^{n}}^2,\\
    \\
    &~\sumT \langle (w_{2,T}^n)_t,\theta e_{2,T}^{n-1}+(1-\theta)e_{2,T}^{n}\rangle _{\partial T}\\
    \leq &~\frac{1}{4\varepsilon _{22}}Ch^{2(k-1+\delta _{k,2})}\norm{u_t(t_n)}_{\m}^2+\varepsilon _{22}\trb{\theta e^{n-1}+(1-\theta)e^{n}}^2,\\
    \\
    &~\sumT\langle w_{2,T}^n,e_{2,T}^n\rangle _{\partial T}\leq Ch^{k-1+\delta _{k,2}}\norm{u(t_n)}_{\m}\trb{e^n}\\
    \\
    &~\sumT ((w_{3,T}^n)_t-\overline\partial w_{3,T}^n,\theta e_{3,T}^{n-1}+(1-\theta)e_{3,T}^{n})_{T}\\
    \leq &~\frac{1}{4\varepsilon _{31}}C\tau h^{2(k-1)}\int_{t_{n-1}}^{t_n}\norm{u_{ss}}_{k+1}^2ds+\varepsilon _{31}\trb{\theta e^{n-1}+(1-\theta)e^{n}}^2,\\
    \\
    &~\sumT ((w_{3,T}^n)_t,\theta e_{3,T}^{n-1}+(1-\theta)e_{3,T}^{n})_{T}\\
    \leq &~\frac{1}{4\varepsilon _{32}}Ch^{2(k-1)}\norm{u_t(t_n)}_{k+1}^2+\varepsilon _{32}\trb{\theta e^{n-1}+(1-\theta)e^{n}}^2,\\
    \\
    &~\sumT (w_{3,T}^n,e_{3,T}^n) _{T}\leq Ch^{k-1}\norm{u(t_n)}_{k+1}\trb{e^n}\\
  \end{align*}

  Combining the above inequalities, (\ref{left}) and (\ref{right}), we get
  \begin{align*}
    &~\frac{1}{\tau}\norm{e^n-e^{n-1}}^2+(\theta -\frac{1}{2})\trb{e^n-e^{n-1}}^2+\frac{1}{2}\Big(\trb{e^n}^2-\trb{e^{n-1}}^2\Big)\\
    \leq &~\frac{1}{4\varepsilon}\norm{\theta u_t(t_n)+(1-\theta)u_t(t_{n-1})-\overline{\partial}u(t_n)}^2+\varepsilon \norm{e^n-e^{n-1}}^2\\
    &~+\tau\sumT \Big[\overline\partial\langle w_{1,T}^n,e_{1,T}^n\rangle _{\partial T}-\overline\partial\langle w_{2,T}^n,e_{2,T}^n\rangle _{\partial T}+\overline\partial (w_{3,T}^n,e_{3,T}^n)_{T}\Big]\\
    &~+(\frac{1}{\varepsilon _{11}}+\frac{1}{\varepsilon _{21}}+\frac{1}{\varepsilon _{31}})C\tau ^2h^{2(k-1)}\int_{t_{n-1}}^{t_n}\norm{u_{ss}}_{\m}^2ds\\
    &~+(\frac{1}{\varepsilon _{12}}+\frac{1}{\varepsilon _{22}}+\frac{1}{\varepsilon _{32}})C\tau h^{2(k-1)}\norm{u_t(t_n)}_{\m}^2\\
    &~+(\varepsilon _{11}+\varepsilon _{21}+\varepsilon _{31}+\varepsilon _{12}+\varepsilon _{22}+\varepsilon _{32})\tau\trb{\theta e^{n-1}+(1-\theta)e^{n}}^2.
  \end{align*}
  Choosing $\varepsilon =\frac{1}{\tau}$ and $\varepsilon _{11}+\varepsilon _{21}+\varepsilon _{31}+\varepsilon _{12}+\varepsilon _{22}+\varepsilon _{32}=1$, we obtain
\begin{align*}
  &~(\theta -\frac{1}{2})\trb{e^n-e^{n-1}}^2+\frac{1}{2}\Big(\trb{e^n}^2-\trb{e^{n-1}}^2\Big)\\
  \leq &~\frac{\tau}{4}\norm{\theta u_t(t_n)+(1-\theta)u_t(t_{n-1})-\overline{\partial}u(t_n)}^2+\tau\sumT \Big[\overline\partial\langle w_{1,T}^n,e_{1,T}^n\rangle _{\partial T}-\overline\partial\langle w_{2,T}^n,e_{2,T}^n\rangle _{\partial T}+\overline\partial (w_{3,T}^n,e_{3,T}^n)_{T}\Big]\\
  &~+C\tau ^2h^{2(k-1)}\int_{t_{n-1}}^{t_n}\norm{u_{ss}}_{\m}^2ds+C\tau h^{2(k-1)}\norm{u_t(t_n)}_{\m}^2+\tau\trb{\theta e^{n-1}+(1-\theta)e^{n}}^2,
\end{align*}
  which implies
\begin{align*}
  \frac{1}{2}\trb{e^n}^2
  \leq &~\frac{1}{2}\trb{e^{n-1}}^2+\frac{\tau}{4}\norm{\theta u_t(t_n)+(1-\theta)u_t(t_{n-1})-\overline{\partial}u(t_n)}^2\\
  &~+\tau\sumT \Big[\overline\partial\langle w_{1,T}^n,e_{1,T}^n\rangle _{\partial T}-\overline\partial\langle w_{2,T}^n,e_{2,T}^n\rangle _{\partial T}+\overline\partial (w_{3,T}^n,e_{3,T}^n)_{T}\Big]\\
  &~+C\tau ^2h^{2(k-1)}\int_{t_{n-1}}^{t_n}\norm{u_{ss}}_{\m}^2ds+C\tau h^{2(k-1)}\norm{u_t(t_n)}_{\m}^2+\tau\trb{\theta e^{n-1}+(1-\theta)e^{n}}^2\\
  \leq &~\frac{1}{2}\trb{e^{0}}^2+\frac{\tau}{4}\sum _{j=1}^n\norm{\theta u_t(t_j)+(1-\theta)u_t(t_{j-1})-\overline{\partial}u(t_j)}^2\\
  &~+\sumT \Big[\langle w_{1,T}^n,e_{1,T}^n\rangle _{\partial T}-\langle w_{1,T}^0,e_{1,T}^0\rangle _{\partial T}-\langle w_{2,T}^n,e_{2,T}^n\rangle _{\partial T}+\langle w_{2,T}^0,e_{2,T}^0\rangle _{\partial T}+(w_{3,T}^n,e_{3,T}^n)_{T}-(w_{3,T}^0,e_{3,T}^0)_{T}\Big]\\
  &~+C\tau ^2h^{2(k-1)}\int_{0}^{t_n}\norm{u_{ss}}_{\m}^2ds+C\tau h^{2(k-1)}\sum _{j=1}^n\norm{u_t(t_j)}_{\m}^2+\tau\sum _{j=1}^n\trb{\theta e^{j-1}+(1-\theta)e^{j}}^2\\
  \leq &~\frac{1}{2}\trb{e^{0}}^2+\frac{\tau}{4}\sum _{j=1}^n\norm{\theta u_t(t_j)+(1-\theta)u_t(t_{j-1})-\overline{\partial}u(t_j)}^2\\
  &~+Ch^{k-1}\norm{u(t_n)}_{\m}\trb{e^n}+Ch^{k-1}\norm{u(0)}_{\m}\trb{e^0}\\
  &~+C\tau ^2h^{2(k-1)}\int_{0}^{t_n}\norm{u_{ss}}_{\m}^2ds+C\tau h^{2(k-1)}n\norm{u_t}_{\m,\infty}^2+\tau\sum _{j=1}^n\trb{\theta e^{j-1}+(1-\theta)e^{j}}^2\\
  \leq &~\frac{1}{2}\trb{e^{0}}^2+\frac{\tau}{4}\sum _{j=1}^n\norm{\theta u_t(t_j)+(1-\theta)u_t(t_{j-1})-\overline{\partial}u(t_j)}^2\\
  &~+Ch^{2(k-1)}\norm{u(t_n)}_{\m}^2+\frac{1}{4}\trb{e^n}^2+Ch^{2(k-1)}\norm{u(0)}_{\m}^2+\frac{1}{4}\trb{e^0}^2\\
  &~+C\tau ^2h^{2(k-1)}\int_{0}^{t_n}\norm{u_{ss}}_{\m}^2ds+Ch^{2(k-1)}\norm{u_t}_{\m,\infty}^2+\tau\sum _{j=1}^n\trb{\theta e^{j-1}+(1-\theta)e^{j}}^2.
\end{align*}
Therefore, we get
\begin{align*}
\frac{1}{4}\trb{e^n}^2
\leq &~\frac{3}{4}\trb{e^{0}}^2+\frac{\tau}{4}\sum _{j=1}^n\norm{\theta u_t(t_j)+(1-\theta)u_t(t_{j-1})-\overline{\partial}u(t_j)}^2\\
  &~+Ch^{2(k-1)}\norm{u}_{\m,\infty}^2+Ch^{2(k-1)}\norm{u(0)}_{\m}^2\\
  &~+C\tau ^2h^{2(k-1)}\int_{0}^{t_n}\norm{u_{ss}}_{\m}^2ds+Ch^{2(k-1)}\norm{u_t}_{\m,\infty}^2+\tau\sum _{j=1}^n\trb{\theta e^{j-1}+(1-\theta)e^{j}}^2\\
  \leq &~\frac{3}{4}\trb{e^{0}}^2+Ch^{2(k-1)}\Big(\norm{u(0)}_{\m}^2+\norm{u}_{\m,\infty}^2+\norm{u_t}_{\m,\infty}^2+\int_{0}^{t_n}\norm{u_{ss}}_{\m}^2ds\Big)\\
  &~+\frac{\tau}{4}\sum _{j=1}^n\norm{\theta u_t(t_j)+(1-\theta)u_t(t_{j-1})-\overline{\partial}u(t_j)}^2+\tau\sum _{j=1}^n\trb{\theta e^{j-1}+(1-\theta)e^{j}}^2
\end{align*}

When $\theta =1$, using (\ref{part_1}) and (\ref{err-equ-full-L2}), we have 
\begin{align*}
  \trb{e^n}^2
  \leq &~C\Big(\norm{e^0}^2+\trb{e^0}^2\Big)+Ch^{2(k-1)}\Big(\norm{u(0)}_{\m}^2+\norm{u}_{\m,\infty}^2+\norm{u_t}_{\m,\infty}^2\\
  &~+\int_{0}^{t_n}\norm{u_{ss}}_{\m}^2ds\Big)+\tau^2 C\int _{0}^{t_n}\norm{u_{ss}}^2ds.
\end{align*}

When $\theta =\frac{1}{2}$, by (\ref{part_2}) and (\ref{err-equ-full-L2-2}), we obtain
\begin{align*}
  \trb{e^n}^2
  \leq &~C\Big(\norm{e^0}^2+\trb{e^0}^2\Big)+Ch^{2(k-1)}\Big(\norm{u(0)}_{\m}^2+\norm{u}_{\m,\infty}^2+\norm{u_t}_{\m,\infty}^2\\
  &~+\int_{0}^{t_n}\norm{u_{ss}}_{\m}^2ds\Big)+\tau^4 C\int _{0}^{t_n}\norm{u_{sss}}^2ds.
\end{align*}
\end{proof}

Since the difference of $\tau\sum _{j=1}^n\trb{\theta e^{j-1}+(1-\theta)e^{j}}^2$ and $\tau\sum _{j=1}^n\trb{\theta e^{j}+(1-\theta)e^{j-1}}^2$, we cannot get the estimation of $\trb{e^n}$ when $\theta\in (\frac{1}{2},1)$. And we can observe that the $L^2$ norm of the error $e^n$ does not reach the optimal convergence order. In order to obtain the optimal error estimates, we first introduce an elliptic projection $E_h$. For any $u\in H_0^2(\Omega)$, we define $E_hu\in V_h$ such that
\begin{align}\label{Eh-def}
  (\WL E_hu,\WL v)=(\Delta ^2u,v),\qquad \forall v\in V_h^0.
\end{align}

In fact, $E_hu$ is the SFWG numerical solution of the following equation:
\begin{align}\label{eq}
  \left\{
\begin{array}{rcl}
\Delta ^2 u &=& f,\qquad \text{in }\Omega,\\
u &=& 0,\qquad \text{on }\partial\Omega,\\
\frac{\partial u}{\partial \bn} &=&0,\qquad \text{on }\partial\Omega.
\end{array}
\right.
\end{align}

And we have the following estimates for corresponding error:
\begin{lemma}\cite{BiharmonicSFWG}\label{Eh-ests}
  For $u\in H^{\max{\{k+1,4\}}}(\Omega)$, $k\geq 2$, $\alpha =\min{\{k,3\}}$, we have the error estimates as follows:
  \begin{align}
    \label{Eh-trb}\trb{u-E_hu}&\leq Ch^{k-1}\norm{u}_{\m},\\
    \label{Eh-L2}\norm{u-E_hu}&\leq Ch^{k+\alpha -2}\norm{u}_{\m}.
  \end{align}
\end{lemma}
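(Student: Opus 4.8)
The plan is to prove the energy-norm bound (\ref{Eh-trb}) by a direct argument and the $L^2$-norm bound (\ref{Eh-L2}) by an Aubin--Nitsche duality argument. The key observation is that $E_hu$ is precisely the SFWG approximation of the stationary biharmonic problem (\ref{eq}) with $f=\Delta^2u$, so all the consistency estimates already collected in this section apply.

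\textbf{Energy estimate.} Repeating the computation in the proof of Theorem~\ref{theorem-semi-err-equ} with the term $(Q_0u_t,v_0)$ deleted gives, for all $v\in V_h^0$, the identity $(\Delta^2u,v_0)=(\Delta_w v,\Delta_w u)+l_1(u,v)-l_2(u,v)$; subtracting the definition (\ref{Eh-def}) of $E_hu$ and invoking Lemma~\ref{operator-ex} yields the error equation
\begin{align*}
(\Delta_w\xi_h,\Delta_w v)=(\Delta_w\varepsilon_h,\Delta_w v)+l_1(u,v)-l_2(u,v),\qquad\forall v\in V_h^0,
\end{align*}
with $\xi_h:=E_hu-Q_hu\in V_h^0$ and $\varepsilon_h=u-Q_hu$. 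Taking $v=\xi_h$, using $\trb{\xi_h}^2=(\Delta_w\xi_h,\Delta_w\xi_h)$, the Cauchy--Schwarz inequality and (\ref{l1-est})--(\ref{l2-est}) gives $\trb{\xi_h}\le\trb{\varepsilon_h}+Ch^{k-1+\delta_{k,2}}\norm{u}_{\m}$; with (\ref{trb-u-Qhu}) and the triangle inequality $\trb{u-E_hu}\le\trb{\varepsilon_h}+\trb{\xi_h}$ this proves (\ref{Eh-trb}), and also $\trb{\xi_h}\le Ch^{k-1}\norm{u}_{\m}$.

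\textbf{$L^2$ estimate.} Since $\norm{u-(E_hu)_0}\le\norm{u-Q_0u}+\norm{(\xi_h)_0}$ and $\norm{u-Q_0u}\le Ch^{k+1}\norm{u}_{k+1}\le Ch^{k+\alpha-2}\norm{u}_{\m}$, it suffices to bound $\norm{(\xi_h)_0}$. Let $\phi\in H^2_0(\Omega)$ solve the dual problem $\Delta^2\phi=(\xi_h)_0$; the elliptic regularity of the biharmonic operator available on $\Omega$ gives $\phi\in H^{\alpha+1}(\Omega)$ with $\norm{\phi}_{\alpha+1}\le C\norm{(\xi_h)_0}$, and it is this cap at $H^4$ that forces $\alpha=\min\{k,3\}$. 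Writing the error identity above with $u$ replaced by $\phi$ and testing with $v=\xi_h$, then splitting $\Delta_w\phi=\Delta_w(Q_h\phi)+\Delta_w(\phi-Q_h\phi)$ — note $Q_h\phi\in V_h^0$ because $\bn_e=\pm\bn$ on boundary edges — and using the step-1 error equation with $v=Q_h\phi$, one gets
\begin{align*}
\norm{(\xi_h)_0}^2=&\ (\Delta_w\varepsilon_h,\Delta_w Q_h\phi)+l_1(u,Q_h\phi)-l_2(u,Q_h\phi)\\
&\ +(\Delta_w\xi_h,\Delta_w(\phi-Q_h\phi))+l_1(\phi,\xi_h)-l_2(\phi,\xi_h).
\end{align*}
The third group and $l_i(\phi,\xi_h)$ are controlled by Cauchy--Schwarz together with $\trb{\xi_h}\le Ch^{k-1}\norm{u}_{\m}$, the companion interpolation bound $\trb{\phi-Q_h\phi}\le Ch^{\alpha-1}\norm{\phi}_{\alpha+1}$ (proved like (\ref{trb-u-Qhu}) at regularity $\alpha+1$) and the analogue of (\ref{l1-est})--(\ref{l2-est}); all of them are $\le Ch^{k+\alpha-2}\norm{u}_{\m}\norm{(\xi_h)_0}$. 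For $l_i(u,Q_h\phi)$ one uses $l_1(u,\phi)=l_2(u,\phi)=0$, valid because $\phi_0-\phi_b=0$ and $(\nabla\phi_0-\phi_n\bn_e)\cdot\bn=0$ for $\phi\in H^2_0$, so that $l_i(u,Q_h\phi)=l_i(u,Q_h\phi-\phi)$ and the approximation properties of $Q_0,Q_b,Q_n$ furnish an extra power $h^{\alpha}$ relative to (\ref{l1-est})--(\ref{l2-est}); and the remaining term $(\Delta_w\varepsilon_h,\Delta_w Q_h\phi)$ is split once more via $\Delta_w Q_h\phi=\dQ_h\Delta\phi+\Delta_w(Q_h\phi-\phi)$ and handled using the $L^2$-orthogonality of $\dQ_h$, Lemma~\ref{operator-ex}, the definition of $\Delta_w$ and the same approximation estimates. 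Collecting, $\norm{(\xi_h)_0}\le Ch^{k+\alpha-2}\norm{u}_{\m}$, which gives (\ref{Eh-L2}).

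\textbf{Main obstacle.} The energy estimate is routine. The difficulty is in the duality step: choosing the dual test function so that the symmetric energy term cancels exactly, and then squeezing the extra powers of $h$ out of $l_i(u,Q_h\phi)$ and $(\Delta_w\varepsilon_h,\Delta_w Q_h\phi)$ via the structural cancellations $l_i(u,\phi)=0$ and the $L^2$-orthogonality of $\dQ_h$ — a crude bound of these two terms only yields $h^{k-1}$. The limited elliptic regularity of the dual biharmonic problem on a polygon is what caps the final gain at $\alpha=\min\{k,3\}$.
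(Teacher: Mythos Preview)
The paper does not supply a proof of this lemma: it is stated with the citation \cite{BiharmonicSFWG} and imported wholesale from that reference, where $E_hu$ is simply the SFWG solution of the stationary biharmonic problem. There is therefore no in-paper argument to compare against.

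Your sketch is the standard route and matches what one finds in the SFWG biharmonic literature: derive the error equation for $\xi_h=E_hu-Q_hu$ from the consistency identity already used in Theorem~\ref{theorem-semi-err-equ}, test with $\xi_h$ for the energy bound, and run an Aubin--Nitsche duality for the $L^2$ bound. The decomposition of the duality pairing into the six pieces you list, with the cancellations $l_i(u,\phi)=0$ for $\phi\in H^2_0(\Omega)$ and the splitting of $\Delta_w Q_h\phi$ via Lemma~\ref{operator-ex}, is exactly how the extra powers of $h$ are recovered. So the proposal is correct in substance.

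One small correction to your concluding remark: the cap $\alpha\le 3$ is indeed the dual regularity limit ($\phi\in H^4$ at best on a polygon), but the companion cap $\alpha\le k$ is \emph{not} a regularity constraint --- it comes from the polynomial degree. Even with $\phi\in H^4$, the bound $\trb{\phi-Q_h\phi}\le Ch^{\alpha-1}\norm{\phi}_{\alpha+1}$ saturates at $\alpha=k$ because $Q_h$ projects into a $P_k$-based space, and this is the limiting term in your $(\Delta_w\xi_h,\Delta_w(\phi-Q_h\phi))$ contribution when $k=2$. Both constraints together give $\alpha=\min\{k,3\}$.
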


  

\begin{lemma}\cite{BiharmonicSFWG}\label{Qh_u-ests}
  For $u\in H^{\max{\{k+1,4\}}}(\Omega)$, $k\geq 2$, we have
  \begin{align}
    \trb{u-Q_hu}&\leq Ch^{k-1}\norm{u}_{k+1},\\
    \norm{u-Q_hu}&\leq Ch^{k+1}\norm{u}_{k+1}.
  \end{align}
\end{lemma}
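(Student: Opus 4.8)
The lemma bundles two approximation estimates for the triple projection $Q_h=\{Q_0,Q_b,Q_n\}$, and the first of them is not new: $\trb{u-Q_hu}\leq Ch^{k-1}\norm{u}_{k+1}$ is exactly (\ref{trb-u-Qhu}) with $w=u$. So the only thing left to establish is the $L^2$ estimate $\norm{u-Q_hu}\leq Ch^{k+1}\norm{u}_{k+1}$, and the whole statement is placed here chiefly so that both orders for $Q_h$ sit next to the estimates (\ref{Eh-trb})--(\ref{Eh-L2}) for $E_h$ that feed the optimal $L^2$ analysis. The plan is therefore to dispose of the $L^2$ bound by a standard element-by-element argument and, should a self-contained treatment of the first bound be wanted rather than a reference to \cite{BiharmonicSFWG}, to recover it from the definition of $\WL$.

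For the $L^2$ estimate, the key point is that the $L^2$-norm of a weak function sees only its interior component, so $\norm{u-Q_hu}=\norm{u-Q_0u}$, where $Q_0$ is the piecewise $L^2$-projection onto $P_k(T)$. Since $k\geq 2$ forces $u\in H^{\m}(\Omega)\subset H^{k+1}(\Omega)$, the classical scaling / Bramble--Hilbert estimate gives $\norm{u-Q_0u}_T\leq Ch_T^{k+1}|u|_{k+1,T}$ on every $T\in\T_h$; squaring, summing over the partition, using $h_T\leq h$, and taking the square root yields $\norm{u-Q_hu}\leq Ch^{k+1}\norm{u}_{k+1}$. This part is entirely routine.

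To re-derive the $\trb\cdot$ bound I would argue elementwise. By Lemma \ref{operator-ex}, $\WL u=\dQ_h\Delta u$ on each $T$, whereas $\WL(Q_hu)$ is the polynomial determined through (\ref{def-wlaplace}) with the data $(Q_0u,Q_bu,Q_n(\G u\cdot\bn_e))$. Subtracting, and using the ordinary (twice-applied) integration by parts for $(\Delta u,\varphi)_T$, one sees that $(\WL(u-Q_hu),\varphi)_T$ collapses to the local projection errors $u-Q_0u$, $u-Q_bu$ and $\G u\cdot\bn_e-Q_n(\G u\cdot\bn_e)$ paired with $\Delta\varphi$, $\nabla\varphi\cdot\bn$ and $\varphi$ over $T$ and $\partial T$. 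Choosing $\varphi=\WL(u-Q_hu)$, applying scaled trace inequalities at the powers $h_T^{-1/2}$ and $h_T^{-3/2}$ dictated by $\norm{\cdot}_{2,h}$ (cf. Lemma \ref{lemma-norm-equ}), together with the Bramble--Hilbert lemma on each element, and summing over $\T_h$ gives the factor $h^{k-1}$. I expect the only real obstacle to be this bookkeeping: one must keep careful track of the degree $j$ of the range of $\WL$ so that the terms involving $\Delta\varphi$ and $\nabla\varphi\cdot\bn$ either vanish by the $L^2$-orthogonality of $Q_0$ and $Q_b$ or are absorbed through their approximation properties, and one must match the scaling exponents of the trace inequality to the approximation orders of $Q_0$, $Q_b$, $Q_n$. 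The $L^2$ half of the lemma carries no such difficulty.
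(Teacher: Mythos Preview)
Your proposal is correct. The paper itself gives no proof of this lemma: it is simply cited from \cite{BiharmonicSFWG}, and indeed the first inequality has already been stated as (\ref{trb-u-Qhu}). Your argument for the $L^2$ bound---observing that the $L^2$-norm of a weak function sees only its interior component, so that $\norm{u-Q_hu}=\norm{u-Q_0u}$ and the estimate reduces to the standard Bramble--Hilbert bound for the local $L^2$-projection onto $P_k(T)$---is exactly right and supplies the self-contained verification that the paper omits. Your sketch for re-deriving the $\trb\cdot$ bound is also sound and matches what one finds in \cite{BiharmonicSFWG}, though strictly speaking nothing beyond invoking (\ref{trb-u-Qhu}) is needed here.
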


Combining Lemma \ref{Eh-ests} with Lemma \ref{Qh_u-ests}, we get
\begin{align}
  \trb{E_hu-Q_hu}&\leq Ch^{k-1}\norm{u}_{\m},\\
  \norm{E_hu-Q_hu}&\leq Ch^{k+\alpha -2}\norm{u}_{\m}.
\end{align}

Let $u_h(t)-Q_hu(t)=u_h-E_hu+E_hu-Q_hu:=\theta (t)+\rho (t)$, and we have the following estimates for the semi-discrete scheme (\ref{semi-scheme}).
\begin{theorem}\label{semi-err}
  Let $u\in H^{\max{\{k+1,4\}}}(\Omega)$ be the exact solution of (\ref{problem-eq}), $u_h$ be the numerical solution of semi-discrete scheme (\ref{semi-scheme}) and we have
  \begin{align}
    \label{uh-L2}\norm{u_h-Q_hu}\leq \ &\norm{u_h(0)-Q_hu(0)}+Ch^{k+\alpha -2}\Big(\norm{\psi}_{\m}+\Big(\int _0^t \norm{u_s}^2_{\m}ds\Big)^\frac{1}{2}\Big),\\
    \label{uh-trb}\trb{u_h(t)-Q_hu(t)}
    \leq \ & \trb{u_h(0)-Q_hu(0)}+Ch^{k-1}(\norm{\psi}_{\m}+\norm{u}_{\m})\\
    &+Ch^{k+\alpha -2}\Big(\int _0^t\norm{u_s}_{\m}^2\Big)^\frac{1}{2}.\nonumber
  \end{align}
  where $k\geq 2$, $\alpha = \min{\{k,3\}}$.
\end{theorem}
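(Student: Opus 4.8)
The plan is to use the splitting $u_h(t)-Q_hu(t)=\theta(t)+\rho(t)$ with $\theta(t)=u_h-E_hu\in V_h^0$ and $\rho(t)=E_hu-Q_hu$ introduced just before the statement, so that by the triangle inequality everything reduces to bounding $\theta$: the $\rho$-terms are already controlled, since Lemmas~\ref{Eh-ests} and~\ref{Qh_u-ests} give $\trb{\rho(t)}\le Ch^{k-1}\norm{u}_{\m}$ and $\norm{\rho(t)}\le Ch^{k+\alpha-2}\norm{u}_{\m}$, and the same two estimates applied to $u_s$ give $\norm{\rho_s}=\norm{E_hu_s-Q_hu_s}\le Ch^{k+\alpha-2}\norm{u_s}_{\m}$. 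First I would establish an error equation for $\theta$. Inserting $\Delta^2u=f-u_t$ from (\ref{problem-eq}) into the definition (\ref{Eh-def}) of the elliptic projection and subtracting it from the semi-discrete scheme (\ref{semi-scheme}) gives $(u_{h,t},v_0)+(\WL\theta,\WL v)=(u_t,v_0)$ for all $v\in V_h^0$. Writing $u_h=\theta+\rho+Q_hu$, the first component of $u_{h,t}$ is $\theta_t+\rho_t+Q_0u_t$, and $(u_t-Q_0u_t,v_0)=0$ because $v_0\in P_k(T)$, so the error equation collapses to
\begin{align}\label{theta-eq-plan}
  (\theta_t,v_0)+(\WL\theta,\WL v)=-(\rho_t,v_0),\qquad\forall v\in V_h^0.
\end{align}

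For (\ref{uh-L2}) I would take $v=\theta$ in (\ref{theta-eq-plan}), use $(\theta_t,\theta_0)=\frac{1}{2}\frac{d}{dt}\norm{\theta}^2$, bound the right-hand side by $\norm{\rho_t}\norm{\theta}$, and absorb $\norm{\theta}$ into $\trb{\theta}$ via Lemma~\ref{lemma-L2-trb} together with Young's inequality; this yields $\frac{d}{dt}\norm{\theta}^2\le C\norm{\rho_t}^2$ with no Gronwall step needed. Integrating in time and inserting $\norm{\rho_s}\le Ch^{k+\alpha-2}\norm{u_s}_{\m}$ bounds $\norm{\theta(t)}$ by $\norm{\theta(0)}+Ch^{k+\alpha-2}\big(\int_0^t\norm{u_s}^2_{\m}\,ds\big)^{1/2}$. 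I would then use $\norm{\theta(0)}\le\norm{u_h(0)-Q_hu(0)}+\norm{\rho(0)}$ with $\norm{\rho(0)}\le Ch^{k+\alpha-2}\norm{\psi}_{\m}$ (since $u(0)=\psi$), and control $\norm{\rho(t)}\le\norm{\rho(0)}+\int_0^t\norm{\rho_s}\,ds\le Ch^{k+\alpha-2}\big(\norm{\psi}_{\m}+(\int_0^t\norm{u_s}^2_{\m}\,ds)^{1/2}\big)$ by Cauchy--Schwarz in $s$; then $\norm{u_h-Q_hu}\le\norm{\theta}+\norm{\rho}$ gives (\ref{uh-L2}).

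For the energy bound (\ref{uh-trb}) I would instead test (\ref{theta-eq-plan}) with $v=\theta_t\in V_h^0$, obtaining $\norm{\theta_t}^2+\frac{1}{2}\frac{d}{dt}\trb{\theta}^2=-(\rho_t,\theta_t)$; Young's inequality absorbs $\norm{\theta_t}^2$ and leaves $\frac{d}{dt}\trb{\theta}^2\le\norm{\rho_t}^2$. Integrating and again using $\norm{\rho_s}\le Ch^{k+\alpha-2}\norm{u_s}_{\m}$ bounds $\trb{\theta(t)}$ by $\trb{\theta(0)}+Ch^{k+\alpha-2}\big(\int_0^t\norm{u_s}^2_{\m}\,ds\big)^{1/2}$, and $\trb{\theta(0)}\le\trb{u_h(0)-Q_hu(0)}+\trb{\rho(0)}\le\trb{u_h(0)-Q_hu(0)}+Ch^{k-1}\norm{\psi}_{\m}$; combining $\trb{u_h-Q_hu}\le\trb{\theta}+\trb{\rho}$ with $\trb{\rho(t)}\le Ch^{k-1}\norm{u}_{\m}$ gives (\ref{uh-trb}). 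The only genuinely delicate point is the derivation of (\ref{theta-eq-plan}): one must verify that the elliptic projection cancels the fourth-order term exactly and that, because $Q_0$ is the $L^2$-projection onto $P_k(T)$, the consistency residual reduces precisely to $-(\rho_t,v_0)$. Once that identity is in hand, both estimates are routine energy arguments relying only on Young's inequality, Lemma~\ref{lemma-L2-trb}, and direct integration in time, together with the projection estimates of Lemmas~\ref{Eh-ests} and~\ref{Qh_u-ests}.
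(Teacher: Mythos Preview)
Your proposal is correct and follows essentially the same route as the paper: the same splitting $\theta+\rho$, the same error equation $(\theta_t,v_0)+(\WL\theta,\WL v)=-(\rho_t,v_0)$, and the same test functions $v=\theta$ and $v=\theta_t$. The only tactical difference is that for the $L^2$ bound the paper drops $\trb{\theta}^2\ge0$ and divides by $\norm{\theta}$ to get $\frac{d}{dt}\norm{\theta}\le\norm{\rho_t}$ directly, whereas you invoke Lemma~\ref{lemma-L2-trb} and Young's inequality to absorb $\norm{\theta}$ into $\trb{\theta}$; both paths lead to the stated estimate.
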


\begin{proof}
  Due to $u_h(t)-Q_hu(t)=\theta (t)+\rho (t)$, we only derive the estimates of $\theta (t)$ and $\rho (t)$ respectively.

  For $\rho (t)$, we have
  \begin{align*}
    \norm{\rho (t)}^2 &\leq Ch^{2(k+\alpha -2)}\norm{u}_{\m}^2\\
    &\leq Ch^{2(k+\alpha -2)}\Big(\norm{\psi}_{\m}^2+\int _0^t \norm{u_s}^2_{\m}ds\Big),
  \end{align*}
  which implies
  \begin{align}\label{rho-L2}
    \norm{\rho (t)}\leq Ch^{k+\alpha -2}\Big(\norm{\psi}_{\m}+\Big(\int _0^t \norm{u_s}^2_{\m}ds\Big)^\frac{1}{2}\Big).
  \end{align}

  And from (\ref{Eh-trb}), we know
  \begin{align}\label{rho-trb}
    \trb{\rho (t)}\leq Ch^{k-1}\norm{u}_{\m}.
  \end{align}
  By (\ref{semi-scheme}), (\ref{Eh-def}) and (\ref{eq}), we have
  \begin{align*}
    (\theta _t,\chi )+(\WL \theta,\WL \chi )
    &=(u_{h,t},\chi )+(\WL u_h,\WL\chi )-(E_hu_t,\chi )-(\WL E_hu,\WL\chi )\\
    &=(f,\chi)-(E_hu_t,\chi)-(\Delta ^2u,\chi)\\
    &=(u_t,\chi)-(E_hu_t,\chi)\\
    &=(Q_hu_t,\chi)-(E_hu_t,\chi)\\
    &=-(\rho _t,\chi),
  \end{align*}
  for any $\chi\in V_h^0$ holds true.

  Let $\chi =\theta$, we get
  \begin{align*}
    (\theta _t,\theta)+\trb{\theta}^2= -(\rho _t,\theta).
  \end{align*}
  Since $(\theta _t,\theta)=\frac{1}{2}\frac{d}{dt}\norm{\theta}^2=\norm{\theta}\frac{d}{dt}\norm{\theta}$ and $\trb{\theta}^2$ is non-negative, we can obtain
  \begin{align*}
    \norm{\theta}\frac{d}{dt}\norm{\theta}&\leq \norm{\rho _t}\norm \theta,
  \end{align*}
  that is
  \begin{align*}
    \frac{d}{dt}\norm{\theta}&\leq \norm{\rho _t}.
  \end{align*}
  Further, we have
  \begin{align*}
    \norm{\theta}\leq\norm{\theta (0)}+\int_0^t\norm{\rho _s}ds.
  \end{align*}
  Due to
  \begin{align*}
    \norm{\theta (0)} &=\norm{u_h(0)-E_hu(0)}\\
    &\leq \norm{u_h(0)-Q_hu(0)}+\norm{Q_hu(0)-E_hu(0)}\\
    &\leq \norm{u_h(0)-Q_hu(0)}+Ch^{k+\alpha -2}\norm{u(0)}_{\m},
  \end{align*}
  and $\norm{\rho _t}=\norm{E_hu_t-Q_hu_t}\leq Ch^{k+\alpha -2}\norm{u_t}_{\m}$, we have
  \begin{align}\label{theta-L2}
    \norm{\theta (t)}\leq\norm{u_h(0)-Q_hu(0)}+Ch^{k+\alpha -2}\Big(\norm{\psi}_{\m}+\int _0^t\norm{u_s}_{\m}ds\Big).
  \end{align}
  Combining (\ref{theta-L2}) with (\ref{rho-L2}), we get
  \begin{align*}
    \norm{u_h(t)-Q_hu(t)}
    &\leq \norm{\theta (t)}+\norm{\rho (t)}\\
    &\leq \norm{u_h(0)-Q_hu(0)}+Ch^{k+\alpha -2}\Big(\norm{\psi}_{\m}+\Big(\int _0^t\norm{u_s}_{\m}^2ds\Big)^\frac{1}{2}\Big),
  \end{align*}
  which obtains (\ref{uh-L2}).

  If we choose $\chi =\theta _t$, we have
  \begin{align*}
    \norm{\theta _t}^2+(\WL \theta,\WL\theta _t)&=-(\rho _t,\theta _t)\\
    &\leq \frac{1}{2}\norm{\rho _t}^2+\frac{1}{2}\norm{\theta _t}^2,
  \end{align*}
  which means
  \begin{align*}
    \frac{1}{2}\norm{\theta _t}^2+\frac{1}{2}\frac{d}{dt}\trb{\theta}^2\leq\frac{1}{2}\norm{\rho _t}^2.
  \end{align*}
  
  Further, we know
  \begin{align*}
    \frac{1}{2}\frac{d}{dt}\trb{\theta}^2\leq\frac{1}{2}\norm{\rho _t}^2.
  \end{align*}
  Then, we have
  \begin{align*}
    \trb{\theta(t)}^2 \leq \trb{\theta (0)}^2+\int _0^t\norm{\rho _s}^2ds.
  \end{align*}

  Due to
  \begin{align*}
    \trb{\theta (0)}&=\trb{u_h(0)-E_hu(0)}\\
    &\leq\trb{u_h(0)-Q_hu(0)}+\trb{Q_hu(0)-E_hu(0)}\\
    &\leq\trb{u_h(0)-Q_hu(0)}+Ch^{k-1}\norm{u(0)}_{\m},
  \end{align*}
  and $\norm{\rho _t}=\norm{E_hu_t-Q_hu_t}\leq Ch^{k+\alpha -2}\norm{u_t}_{\m}$, we get
  \begin{align*}
    \trb{\theta (t)}^2\leq(\trb{u_h(0)-Q_hu(0)}+Ch^{k-1}\norm{\psi}_{\m})^2+Ch^{2(k+\alpha -2)}\int _0^t\norm{u_s}_{\m}^2ds,
  \end{align*}
  which implies
  \begin{align}\label{theta-trb}
    \trb{\theta (t)}\leq \trb{u_h(0)-Q_hu(0)}+Ch^{k-1}\norm{\psi}_{\m}+Ch^{k+\alpha -2}\Big(\int _0^t\norm{u_s}_{\m}^2ds\Big)^\frac{1}{2}.
  \end{align}

  Using (\ref{theta-trb}) and (\ref{rho-trb}), we have
  \begin{align*}
    \trb{u_h(t)-Q_hu(t)}\leq\ &\trb{\theta (t)}+\trb{\rho (t)}\\
    \leq \ & \trb{u_h(0)-Q_hu(0)}+Ch^{k-1}(\norm{\psi}_{\m}+\norm{u}_{\m})\\
    &+Ch^{k+\alpha -2}\Big(\int _0^t\norm{u_s}_{\m}^2ds\Big)^\frac{1}{2}.
  \end{align*}
  The proof is completed.
\end{proof}

For the full-discrete scheme (\ref{full-scheme}), we have the following estimates.

\begin{theorem}\label{full-est}
  Let $u\in H^{\max{\{k+1,4\}}}(\Omega)$ $(k\geq 2$, $\alpha = \min{\{k,3\}})$ be the exact solution of (\ref{problem-eq}) and $U^n$ be the numerical solution of full-discrete scheme (\ref{full-scheme}), then we have the following estimates.
  When $\theta\in(\frac{1}{2},1]$,
  \begin{align}
    \label{full-L2}\norm{U^n-Q_hu(t_n)}
    \leq &\norm{U^0-Q_hu(0)}+Ch^{k+\alpha -2}\Big(\norm{\psi}_{\m}+\Big(\int _0^{t_n}\norm{u_s}_{\m}^2ds\Big)^\frac{1}{2}\Big)\\ \nonumber
    &+C\tau\Big(\int _0^{t_n}\norm{u_{ss}}^2ds\Big)^\frac{1}{2},\\ 
    \label{full-trb}\trb{U^n-Q_hu(t_n)}
    \leq &\trb{U^0-Q_hu(0)}+Ch^{k-1}\Big(\norm{\psi}_{\m}+\Big(\int _0^{t_n}\norm{u_s}_{\m}^2ds\Big)^\frac{1}{2}\Big)\\ \nonumber
    &+C\tau\Big(\int _0^{t_n}\norm{u_{ss}}^2ds\Big)^\frac{1}{2}.
  \end{align}
  When $\theta =\frac{1}{2}$,
  \begin{align}
    \label{full-L2-2}\norm{U^n-Q_hu(t_n)}
    \leq &\norm{U^0-Q_hu(0)}+Ch^{k+\alpha -2}\Big(\norm{\psi}_{\m}+\Big(\int _0^{t_n}\norm{u_s}_{\m}^2ds\Big)^\frac{1}{2}\Big)\\ \nonumber
    &+C\tau ^2\Big(\int _0^{t_n}\norm{u_{sss}}^2ds\Big)^\frac{1}{2},\\
    \label{full-trb-2}\trb{U^n-Q_hu(t_n)}
    \leq &\trb{U^0-Q_hu(0)}+Ch^{k-1}\Big(\norm{\psi}_{\m}+\Big(\int _0^{t_n}\norm{u_s}_{\m}^2ds\Big)^\frac{1}{2}\Big)\\ \nonumber
    &+C\tau ^2\Big(\int _0^{t_n}\norm{u_{sss}}^2ds\Big)^\frac{1}{2}.
  \end{align}
\end{theorem}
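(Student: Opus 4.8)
The plan is to reuse the elliptic–projection splitting from Theorem~\ref{semi-err}, now combined with the temporal consistency bounds (\ref{part_1})--(\ref{part_2}). Write $U^n-Q_hu(t_n)=\theta^n+\rho^n$ with $\theta^n:=U^n-E_hu(t_n)$ and $\rho^n:=E_hu(t_n)-Q_hu(t_n)$, and estimate the two pieces separately. For $\rho^n$ the combined estimates of Lemmas~\ref{Eh-ests} and~\ref{Qh_u-ests} give $\trb{\rho^n}\leq Ch^{k-1}\norm{u(t_n)}_{\m}$ and $\norm{\rho^n}\leq Ch^{k+\alpha-2}\norm{u(t_n)}_{\m}$; writing $u(t_n)=\psi+\int_0^{t_n}u_s\,ds$ and using Cauchy--Schwarz with $t_n\leq\overline{t}$ turns $\norm{u(t_n)}_{\m}$ into $\norm{\psi}_{\m}+(\int_0^{t_n}\norm{u_s}_{\m}^2\,ds)^{1/2}$, which matches the spatial terms in the claimed bounds.

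The heart of the argument is a full-discrete error equation for $\theta^n$. I would subtract $\theta(\WL E_hu(t_n),\WL v)+(1-\theta)(\WL E_hu(t_{n-1}),\WL v)$ — each evaluated via the elliptic projection (\ref{Eh-def}) as $\theta(\Delta^2u(t_n),v_0)+(1-\theta)(\Delta^2u(t_{n-1}),v_0)$ — from (\ref{full-scheme}); since $f=u_t+\Delta^2u$, the biharmonic terms cancel and there remains
\[
(\OP\theta^n,v_0)+(\WL(\theta\theta^n+(1-\theta)\theta^{n-1}),\WL v)=(g^n,v_0),\qquad\forall v\in V_h^0,
\]
with $g^n:=\theta u_t(t_n)+(1-\theta)u_t(t_{n-1})-\OP E_hu(t_n)$. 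The key observation is that $\OP E_hu(t_n)$ must be compared not with $E_hu_t$ but with the temporal average $\theta u_t(t_n)+(1-\theta)u_t(t_{n-1})$, so that $g^n=r^n+\OP(u-E_hu)(t_n)$, where $r^n:=\theta u_t(t_n)+(1-\theta)u_t(t_{n-1})-\OP u(t_n)$ is exactly the time-truncation residual already bounded by (\ref{part_1}) for $\theta\in(\frac12,1]$ and by (\ref{part_2}) for $\theta=\frac12$. The remaining piece is controlled by $\OP(u-E_hu)(t_n)=\frac1\tau\int_{t_{n-1}}^{t_n}(u-E_hu)_s\,ds$ together with $(E_hu)_s=E_hu_s$ and (\ref{Eh-L2}), giving $\norm{\OP(u-E_hu)(t_n)}\leq Ch^{k+\alpha-2}\tau^{-1/2}(\int_{t_{n-1}}^{t_n}\norm{u_s}_{\m}^2\,ds)^{1/2}$; hence $\tau\sum_{j=1}^n\norm{g^j}^2\leq C\tau^2\int_0^{t_n}\norm{u_{ss}}^2\,ds+Ch^{2(k+\alpha-2)}\int_0^{t_n}\norm{u_s}_{\m}^2\,ds$ when $\theta\in(\frac12,1]$, and with $\tau^4\int_0^{t_n}\norm{u_{sss}}^2\,ds$ in place of the first term when $\theta=\frac12$.

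To close the estimate, for the $L^2$ bound I take $v=\theta\theta^n+(1-\theta)\theta^{n-1}$ and use the identity $(\OP\theta^n,\theta\theta^n+(1-\theta)\theta^{n-1})=\frac1\tau(\theta-\frac12)\norm{\theta^n-\theta^{n-1}}^2+\frac1{2\tau}(\norm{\theta^n}^2-\norm{\theta^{n-1}}^2)$ already used for the stability and uniqueness theorems; Cauchy--Schwarz on $(g^n,\cdot)$, (\ref{L2-trb}) to absorb the resulting $\trb{\cdot}^2$ term, and $\theta\geq\frac12$ to discard the nonnegative difference term yield $\norm{\theta^n}^2\leq\norm{\theta^{n-1}}^2+C\tau\norm{g^n}^2$, which telescopes to $\norm{\theta^n}^2\leq\norm{\theta^0}^2+C\tau\sum_{j=1}^n\norm{g^j}^2$. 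Since $\norm{\theta^0}\leq\norm{U^0-Q_hu(0)}+\norm{Q_hu(0)-E_hu(0)}\leq\norm{U^0-Q_hu(0)}+Ch^{k+\alpha-2}\norm{\psi}_{\m}$, combining with the $\rho^n$ bound produces (\ref{full-L2}) and (\ref{full-L2-2}). For the $\trb{\cdot}$ bound I take $v=\OP\theta^n$ instead, use the companion identity $(\WL(\theta\theta^n+(1-\theta)\theta^{n-1}),\WL\OP\theta^n)=\frac1\tau(\theta-\frac12)\trb{\theta^n-\theta^{n-1}}^2+\frac1{2\tau}(\trb{\theta^n}^2-\trb{\theta^{n-1}}^2)$ (as in (\ref{left})), bound $(g^n,\OP\theta^n)\leq\frac12\norm{g^n}^2+\frac12\norm{\OP\theta^n}^2$, drop the nonnegative terms, telescope, and use $\trb{\theta^0}\leq\trb{U^0-Q_hu(0)}+Ch^{k-1}\norm{\psi}_{\m}$; coarsening $h^{k+\alpha-2}$ to $h^{k-1}$ (valid since $\alpha\geq1$) and adding $\trb{\rho^n}$ gives (\ref{full-trb}) and (\ref{full-trb-2}).

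The main obstacle is the correct derivation and treatment of $g^n$: extracting the cancellation of the biharmonic terms out of (\ref{full-scheme}) and (\ref{Eh-def}), and then splitting $g^n$ so that the temporal-consistency residual $r^n$ (to which (\ref{part_1})--(\ref{part_2}) apply verbatim) is cleanly separated from the elliptic-projection error $\OP(u-E_hu)(t_n)$ — the latter requiring the integral representation of the backward difference rather than a pointwise-in-time estimate of $u-E_hu$. Once $g^n$ is under control, the remaining manipulations are direct analogues of the energy arguments already carried out in the paper, and no Gronwall inequality is needed because the $\theta^n$-equation is driven purely by $g^n$.
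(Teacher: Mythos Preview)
Your proposal is correct and follows essentially the same route as the paper: the same elliptic-projection splitting $U^n-Q_hu(t_n)=\eta^n+\rho^n$ (the paper writes $\eta^n$ for what you call $\theta^n$), the same derivation of an error equation for $\eta^n$ driven solely by a temporal-consistency residual plus a projected-difference term, the same test function $\theta\eta^n+(1-\theta)\eta^{n-1}$ for the $L^2$ bound with absorption via (\ref{L2-trb}), and the same test function $\OP\eta^n$ (the paper multiplies by $\tau$ and takes $v=\eta^n-\eta^{n-1}$) for the $\trb{\cdot}$ bound. The only substantive difference is cosmetic: you split $g^n=r^n+\OP(u-E_hu)(t_n)$ using $u$ as intermediate, whereas the paper inserts $Q_hu$ and writes $-\OP\rho^n-(\OP u(t_n)-[\theta u_t(t_n)+(1-\theta)u_t(t_{n-1})])$; both yield the same $Ch^{k+\alpha-2}\tau^{-1/2}$ bound on the projection piece via the integral representation of $\OP$. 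One notational caution: your choice of $\theta^n$ for the error component clashes with the scheme parameter $\theta$, producing expressions like $\theta\theta^n$; the paper avoids this by writing $\eta^n$.
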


\begin{proof}
  Let $U^n-Q_hu(t_n)=(U^n-E_hu(t_n))+(E_hu(t_n)-Q_hu(t_n)):=\eta ^n+\rho ^n$, and we know
  \begin{align*}
    \norm{\rho ^n}=&\norm{E_hu(t_n)-Q_hu(t_n)}\\
    \leq& Ch^{k+\alpha -2}\norm{u(t_n)}_{\m}\\
    \leq& Ch^{k+\alpha -2}\Big(\norm{\psi}_{\m}+\Big(\int _0^{t_n}\norm{u_s}_{\m}^2ds\Big)^\frac{1}{2}\Big),\\
    \trb{\rho ^n}=&\trb{E_hu(t_n)-Q_hu(t_n)}\\
    \leq& Ch^{k-1}\norm{u(t_n)}_{\m}\\
    \leq& Ch^{k-1}\Big(\norm{\psi}_{\m}+\Big(\int _0^{t_n}\norm{u_s}_{\m}^2ds\Big)^\frac{1}{2}\Big).
  \end{align*}
  Therefore, we only need to estimate $\eta ^n$.

  Using the full-discrete scheme (\ref{full-scheme}), the definition of $E_h$ and the equation (\ref{eq}), we have
  \begin{align*}
    &(\overline{\partial}\eta ^n,v_0)+(\WL (\theta \eta ^n+(1-\theta)\eta ^{n-1}),\WL v)\\
    =&(\OP U^n,v_0)-(\OP E_hu(t_n),v_0)+(\WL (\theta U^n+(1-\theta)U^{n-1}),\WL v)\\
    &-(\WL (\theta E_hu(t_n)+(1-\theta)E_hu(t_{n-1})),\WL v)\\
    =&(\theta f(t_n)+(1-\theta)f(t_{n-1}),v_0)-(\OP E_hu(t_n),v_0)-(\Delta ^2(\theta u(t_n)+(1-\theta)u(t_{n-1})),v)\\
    =&-(\OP E_hu(t_n),v_0)+(\theta u_t(t_n)+(1-\theta)u_t(t_{n-1}),v_0)\\
    =&-(\OP \rho ^n,v_0)-(\OP Q_hu(t_n)-(\theta u_t(t_n)+(1-\theta)u_t(t_{n-1})),v_0)\\
    =&-(\OP \rho ^n,v_0)-(\OP u(t_n)-(\theta u_t(t_n)+(1-\theta)u_t(t_{n-1})),v_0),
  \end{align*}
  for any $v\in V_h^0$ hold true.

  Choosing $v=\theta\eta ^n+(1-\theta)\eta ^{n-1}$, it follows that
  \begin{align*}
    &\frac{1}{2\tau}\norm{\eta ^n}^2-\frac{1}{2\tau}\norm{\eta ^{n-1}}^2+\frac{1}{\tau}(\theta -\frac{1}{2})\norm{\eta ^n-\eta ^{n-1}}^2+\trb{\theta \eta ^n+(1-\theta)\eta ^{n-1}}^2\\
    =&-(\OP \rho ^n,\theta\eta ^n+(1-\theta)\eta ^{n-1})-(\OP u(t_n)-(\theta u_t(t_n)+(1-\theta)u_t(t_{n-1})),\theta\eta ^n+(1-\theta)\eta ^{n-1})\\
    \leq &(\norm{\OP \rho ^n}+\norm{\OP u(t_n)-(\theta u_t(t_n)+(1-\theta)u_t(t_{n-1}))})\norm{\theta\eta ^n+(1-\theta)\eta ^{n-1}},
  \end{align*}

  Using (\ref{part_1}), we get
  \begin{align*}
    &\norm{\eta ^n}^2-\norm{\eta ^{n-1}}^2+2(\theta -\frac{1}{2})\norm{\eta ^n-\eta ^{n-1}}^2+2\tau\trb{\theta \eta ^n+(1-\theta)\eta ^{n-1}}^2\\
    \leq &2\tau(\norm{\OP \rho ^n}+\norm{\OP u(t_n)-(\theta u_t(t_n)+(1-\theta)u_t(t_{n-1}))})\norm{\theta\eta ^n+(1-\theta)\eta ^{n-1}}\\
    \leq &2\tau\norm{\OP \rho ^n}\norm{\theta\eta ^n+(1-\theta)\eta ^{n-1}}+2\tau ^\frac{3}{2}\Big(\int _{t_{n-1}}^{t_n}\norm{u_{ss}}^2ds\Big)^{\frac{1}{2}}\norm{\theta\eta ^n+(1-\theta)\eta ^{n-1}}\\
    \leq &\frac{\tau}{\varepsilon}\norm{\OP\rho ^n}^2+\frac{\varepsilon}{4}4\tau\norm{\theta\eta ^n+(1-\theta)\eta ^{n-1}}^2+\frac{1}{\varepsilon}\tau ^2\int _{t_{n-1}}^{t_n}\norm{u_{ss}}^2ds+\frac{\varepsilon}{4}4\tau\norm{\theta\eta ^n+(1-\theta)\eta ^{n-1}}^2\\
    \leq &\frac{\tau}{\varepsilon}\norm{\OP\rho ^n}^2+\frac{\tau ^2}{\varepsilon}\int _{t_{n-1}}^{t_n}\norm{u_{ss}}^2ds+2\varepsilon\tau\norm{\theta\eta ^n+(1-\theta)\eta ^{n-1}}^2.
  \end{align*}

  Let $\varepsilon =\frac{1}{4}$, and we have
  \begin{align*}
    \norm{\eta ^n}^2-\norm{\eta ^{n-1}}^2
    \leq 4\tau\norm{\OP \rho ^n}^2+4\tau ^2\int _{t_{n-1}}^{t_n}\norm{u_{ss}}^2ds,
  \end{align*}
  where we use the inequality (\ref{L2-trb}).

  Since
  \begin{align}
    \label{part_rho}\norm{\OP \rho ^n}^2=&\int _\Omega \Big(\frac{E_hu(t_n)-Q_hu(t_n)-(E_hu(t_{n-1})-Q_hu(t_{n-1}))}{\tau}\Big)^2dx\\ \nonumber
    =&\frac{1}{\tau ^2}\int _\Omega\Big(\int _{t_{n-1}}^{t_n}(E_hu_s-Q_hu_s)ds\Big)^2dx\\ \nonumber
    \leq &\frac{1}{\tau ^2}\int _\Omega\Big(\int _{t_{n-1}}^{t_n}(E_hu_s-Q_hu_s)^2ds\Big)\Big(\int_{t_{n-1}}^{t_n}1ds\Big)dx\\ \nonumber
    \leq &\frac{1}{\tau}\int _{t_{n-1}}^{t_n}\norm{E_hu_s-Q_hu_s}^2ds\\ \nonumber
    \leq &\frac{1}{\tau}Ch^{2(k+\alpha -2)}\int _{t_{n-1}}^{t_n}\norm{u_s}_{\m}^2ds, \nonumber
  \end{align}
  we get 
  \begin{align*}
    \norm{\eta ^n}^2\leq \norm{\eta ^{n-1}}^2+Ch^{2(k+\alpha -2)}\int _{t_{n-1}}^{t_n}\norm{u_s}_{\m}^2ds+C\tau ^2\int _{t_{n-1}}^{t_n}\norm{u_{ss}}^2ds.
  \end{align*}

  It follows that 
  \begin{align*}
    \norm{\eta ^n}^2\leq \norm{\eta ^{0}}^2+Ch^{2(k+\alpha -2)}\int _{0}^{t_n}\norm{u_s}_{\m}^2ds+C\tau ^2\int _{0}^{t_n}\norm{u_{ss}}^2ds.
  \end{align*}

  And with 
  \begin{align*}
    \norm{\eta ^0}=\norm{U^0-E_hu(0)}\leq &~\norm{U^0-Q_hu(0)}+\norm{Q_hu(0)-E_hu(0)}\\
    \leq &~\norm{U^0-Q_hu(0)}+Ch^{k+\alpha -2}\norm{\psi}_{\m},
  \end{align*}
  we obtain
  \begin{align*}
    \norm{\eta ^n}\leq &~\norm{U^0-Q_hu(0)}+Ch^{k+\alpha -2}\norm{\psi}_{\m}+Ch^{k+\alpha -2}\Big(\int _{0}^{t_n}\norm{u_s}_{\m}^2ds\Big)^\frac{1}{2}\\
    &~+C\tau \Big(\int _{0}^{t_n}\norm{u_{ss}}^2ds\Big)^\frac{1}{2}.
  \end{align*}

  Using $U^n-Q_hu(t_n)=\eta ^n+\rho ^n$, we have (\ref{full-L2}).

  By (\ref{part_2}), we know
  \begin{align*}
    &\norm{\eta ^n}^2-\norm{\eta ^{n-1}}^2+2(\theta -\frac{1}{2})\norm{\eta ^n-\eta ^{n-1}}^2+2\tau\trb{\theta \eta ^n+(1-\theta)\eta ^{n-1}}^2\\
    \leq &2\tau(\norm{\OP \rho ^n}+\norm{\OP u(t_n)-(\theta u_t(t_n)+(1-\theta)u_t(t_{n-1}))})\norm{\theta\eta ^n+(1-\theta)\eta ^{n-1}}\\
    \leq &2\tau\norm{\OP \rho ^n}\norm{\theta\eta ^n+(1-\theta)\eta ^{n-1}}+\tau ^\frac{5}{2}\Big(\int _{t_{n-1}}^{t_n}\norm{u_{sss}}^2ds\Big)^{\frac{1}{2}}\norm{\theta\eta ^n+(1-\theta)\eta ^{n-1}}\\
    \leq &\frac{\tau}{\varepsilon}\norm{\OP\rho ^n}^2+\frac{\varepsilon}{4}4\tau\norm{\theta\eta ^n+(1-\theta)\eta ^{n-1}}^2+\frac{1}{\varepsilon}\frac{\tau ^4}{4}\int _{t_{n-1}}^{t_n}\norm{u_{sss}}^2ds+\frac{\varepsilon}{4}4\tau\norm{\theta\eta ^n+(1-\theta)\eta ^{n-1}}^2\\
    \leq &\frac{\tau}{\varepsilon}\norm{\OP\rho ^n}^2+\frac{\tau ^4}{4\varepsilon}\int _{t_{n-1}}^{t_n}\norm{u_{sss}}^2ds+2\varepsilon\tau\norm{\theta\eta ^n+(1-\theta)\eta ^{n-1}}^2,
  \end{align*}

  Next, similar to the proof for the case $\theta\in(\frac{1}{2},1]$, we can get (\ref{full-L2-2}).

  For the equation
  \begin{align*}
    &(\overline{\partial}\eta ^n,v_0)+(\WL (\theta \eta ^n+(1-\theta)\eta ^{n-1}),\WL v)\\
    =&-(\OP \rho ^n,v_0)-(\OP u(t_n)-(\theta u_t(t_n)+(1-\theta)u_t(t_{n-1})),v_0),\qquad \forall v\in V_h^0,
  \end{align*}
  we set $v=\eta ^n-\eta ^{n-1}$ and arrive at 
  \begin{align*}
    &(\overline{\partial}\eta ^n,\eta ^n-\eta ^{n-1})+(\WL (\theta \eta ^n+(1-\theta)\eta ^{n-1}),\WL (\eta ^n-\eta ^{n-1}))\\
    =&-(\OP \rho ^n,\eta ^n-\eta ^{n-1})-(\OP u(t_n)-(\theta u_t(t_n)+(1-\theta)u_t(t_{n-1})),\eta ^n-\eta ^{n-1}).
  \end{align*}

  From $\theta \eta ^n+(1-\theta)\eta ^{n-1}=(\theta -\frac{1}{2})(\eta ^n-\eta ^{n-1})+\frac{1}{2}(\eta ^n+\eta ^{n-1})$, the Cauchy-Schwarz inequality, and the Young inequality, we have
  \begin{align*}
    &\frac{1}{\tau}\norm{\eta ^n-\eta ^{n-1}}^2+(\theta -\frac{1}{2})\trb{\eta ^n-\eta ^{n-1}}^2+\frac{1}{2}(\trb{\eta ^n}^2-\trb{\eta ^{n-1}}^2)\\
    =&-(\OP \rho ^n,\eta ^n-\eta ^{n-1})-(\OP u(t_n)-(\theta u_t(t_n)+(1-\theta)u_t(t_{n-1})),\eta ^n-\eta ^{n-1})\\
    \leq &\norm{\OP \rho ^n}\norm{\eta ^n-\eta ^{n-1}}+\norm{\OP u(t_n)-(\theta u_t(t_n)+(1-\theta)u_t(t_{n-1}))}\norm{\eta ^n-\eta ^{n-1}}\\
    \leq &\frac{2\tau}{\varepsilon}\norm{\OP \rho ^n}^2+\frac{\varepsilon}{8\tau}\norm{\eta ^n-\eta ^{n-1}}^2+\frac{2\tau}{\varepsilon}\norm{\OP u(t_n)-(\theta u_t(t_n)+(1-\theta)u_t(t_{n-1}))}^2+\frac{\varepsilon}{8\tau}\norm{\eta ^n-\eta ^{n-1}}^2\\
    \leq &\frac{2\tau}{\varepsilon}\norm{\OP \rho ^n}^2+\frac{\varepsilon}{4\tau}\norm{\eta ^n-\eta ^{n-1}}^2+\frac{2\tau}{\varepsilon}\norm{\OP u(t_n)-(\theta u_t(t_n)+(1-\theta)u_t(t_{n-1}))}^2.
  \end{align*}

  When $\theta\in(\frac{1}{2},1]$, selecting $\varepsilon =\frac{1}{2}$ and using (\ref{part_1}), (\ref{part_rho}), we obtain
  \begin{align*}
    &\frac{1}{\tau}\norm{\eta ^n-\eta ^{n-1}}^2+(\theta -\frac{1}{2})\trb{\eta ^n-\eta ^{n-1}}^2+\frac{1}{2}(\trb{\eta ^n}^2-\trb{\eta ^{n-1}}^2)\\
    \leq & 4\tau\norm{\OP \rho ^n}^2+\frac{1}{8\tau}\norm{\eta ^n-\eta ^{n-1}}^2+4\tau ^2\int _{t_{n-1}}^{t_n}\norm{u_{ss}}^2ds\\
    \leq & Ch^{2(k+\alpha -2)}\int _{t_{n-1}}^{t_n}\norm{u_s}_{\m}^2ds+\frac{1}{8\tau}\norm{\eta ^n-\eta ^{n-1}}^2+4\tau ^2\int _{t_{n-1}}^{t_n}\norm{u_{ss}}^2ds,
  \end{align*}
  which show
  \begin{align*}
    &\frac{7}{8\tau}\norm{\eta ^n-\eta ^{n-1}}^2+(\theta -\frac{1}{2})\trb{\eta ^n-\eta ^{n-1}}^2+\frac{1}{2}(\trb{\eta ^n}^2-\trb{\eta ^{n-1}}^2)\\
    \leq & Ch^{2(k+\alpha -2)}\int _{t_{n-1}}^{t_n}\norm{u_s}_{\m}^2ds+4\tau ^2\int _{t_{n-1}}^{t_n}\norm{u_{ss}}^2ds.
  \end{align*}

  With $\norm{\eta ^n-\eta ^{n-1}}^2\geq 0$ and $\trb{\eta ^n-\eta ^{n-1}}^2\geq 0$, we get 
  \begin{align*}
    \trb{\eta ^n}^2\leq \trb{\eta ^{n-1}}^2+Ch^{2(k+\alpha -2)}\int _{t_{n-1}}^{t_n}\norm{u_s}_{\m}^2ds+4\tau ^2\int _{t_{n-1}}^{t_n}\norm{u_{ss}}^2ds,
  \end{align*}
  which implies
  \begin{align*}
    \trb{\eta ^n}^2\leq \trb{\eta ^{0}}^2+Ch^{2(k+\alpha -2)}\int _{0}^{t_n}\norm{u_s}_{\m}^2ds+4\tau ^2\int _{0}^{t_n}\norm{u_{ss}}^2ds.
  \end{align*}

  Since 
  \begin{align*}
    \trb{\eta ^0}=\trb{U^0-E_hu(0)}\leq \trb{U^0-Q_hu(0)}+\trb{Q_hu(0)-E_hu(0)}\leq \trb{U^0-Q_hu(0)}+Ch^{k-1}\norm{\psi}_{\m},
  \end{align*}
  we have
  \begin{align*}
    \trb{\eta ^n}\leq &~\trb{U^0-Q_hu(0)}+Ch^{k-1}\norm{\psi}_{\m}+Ch^{k+\alpha -2}\Big(\int _{0}^{t_n}\norm{u_s}_{\m}^2ds\Big)^\frac{1}{2}\\
    &~+C\tau \Big(\int _{0}^{t_n}\norm{u_{ss}}^2ds\Big)^\frac{1}{2}.
  \end{align*}

  Combining the estimates $\rho ^n$ and $\eta ^n$, we know
  \begin{align*}
    \trb{U^n-Q_hu(t_n)}\leq &\trb{\rho ^n}+\trb{\eta ^n}\\
    \leq &\trb{U^0-Q_hu(0)}+Ch^{k-1}\norm{\psi}_{\m}+Ch^{k-1}\Big(\int _{0}^{t_n}\norm{u_s}_{\m}^2ds\Big)^\frac{1}{2}\\
    &+Ch^{k+\alpha -2}\Big(\int _{0}^{t_n}\norm{u_s}_{\m}^2ds\Big)^\frac{1}{2}
    +C\tau \Big(\int _{0}^{t_n}\norm{u_{ss}}^2ds\Big)^\frac{1}{2}\\
    \leq &\trb{U^0-Q_hu(0)}+Ch^{k-1}\norm{\psi}_{\m}+Ch^{k-1}\Big(\int _{0}^{t_n}\norm{u_s}_{\m}^2ds\Big)^\frac{1}{2}\\
    &~+C\tau \Big(\int _{0}^{t_n}\norm{u_{ss}}^2ds\Big)^\frac{1}{2}.
  \end{align*}

  When $\theta =\frac{1}{2}$, using (\ref{part_2}) and the same process, we can get (\ref{full-trb-2}). So, the theorem has been proved.
\end{proof}

\section{Numerical Experiments}
In this section, we use two numerical examples to verify the efficiencies
of SFWG finite element method for the fourth order parabolic equation (\ref{problem-eq}).

We consider the problem (\ref{problem-eq}) on the unit square $\Omega=(0,1)^2$ and the time interval $(0,1]$. We choose the true solution is
\begin{align}\label{ex}
  u=\sin\Big(2\pi (t^2+1)+\frac{\pi}{2}\Big)\sin \Big(2\pi x+\frac{\pi}{2}\Big)\sin\Big(2\pi y+\frac{\pi}{2}\Big).
\end{align}

\begin{table}[htbp]
  \centering
  \caption{Error values and convergence rates for (\ref{ex}) on triangular meshes using $P_2$ and $\theta =1$}
    \begin{tabular}{c|c|c|c|c|c|c}
    \hline
    $n$ & $\trb{Q_hu -u _h} $ & Rate & $\norm{Q_h u-u_h}_{2,h}$& Rate & $\norm{Q_hu -u _h}$ & Rate \\
    \hline
    4     & 1.0411E+02 & ---    & 1.4853E+01 & ---    & 2.6906E-01 & --- \\
    8     & 5.6458E+01 & 0.88  & 5.8690E+00 & 1.34  & 9.8158E-02 & 1.45  \\
    16    & 2.8813E+01 & 0.97  & 1.9380E+00 & 1.60  & 2.7275E-02 & 1.85  \\
    32    & 1.4483E+01 & 0.99  & 7.2400E-01 & 1.42  & 7.0213E-03 & 1.96  \\
    64    & 7.2522E+00 & 1.00  & 3.2203E-01 & 1.17  & 1.7754E-03 & 1.98  \\
    128   & 3.6277E+00 & 1.00  & 1.5587E-01 & 1.05  & 4.5133E-04 & 1.98  \\
    \hline
    \end{tabular}%
  \label{ex_triP2}%
\end{table}%

\begin{table}[htbp]
  \centering
  \caption{Error values and convergence rates for (\ref{ex}) on triangular meshes using $P_3$ and $\theta =1$}
    \begin{tabular}{c|c|c|c|c|c|c}
    \hline
    $n$ & $\trb{Q_hu -u _h} $ & Rate & $\norm{Q_h u-u_h}_{2,h}$& Rate & $\norm{Q_hu -u _h}$ & Rate \\
    \hline
    2     & 1.5479E+02 & ---    & 2.0388E+01 & ---    & 3.2198E-01 & --- \\
    4     & 4.0274E+01 & 1.94  & 5.1257E+00 & 1.99  & 3.0504E-02 & 3.40  \\
    8     & 1.0578E+01 & 1.93  & 1.2560E+00 & 2.03  & 2.2763E-03 & 3.74  \\
    12    & 4.7431E+00 & 1.98  & 5.5063E-01 & 2.03  & 4.6935E-04 & 3.89  \\
    16    & 2.6762E+00 & 1.99  & 3.0745E-01 & 2.03  & 1.5102E-04 & 3.94  \\
    20    & 1.7152E+00 & 1.99  & 1.9596E-01 & 2.02  & 6.2420E-05 & 3.96  \\
    24    & 1.1920E+00 & 2.00  & 1.3575E-01 & 2.01  & 3.0310E-05 & 3.96  \\
    \hline
    \end{tabular}%
  \label{ex_triP3}%
\end{table}%

\begin{table}[htbp]
  \centering
  \caption{Error values and convergence rates for (\ref{ex}) on polygonal meshes using $P_3$ and $\theta =1$}
    \begin{tabular}{c|c|c|c|c|c|c}
      \hline
      $n$ & $\trb{Q_hu -u _h} $ & Rate & $\norm{Q_h u-u_h}_{2,h}$& Rate & $\norm{Q_hu -u _h}$ & Rate \\
      \hline
    2     & 2.0786E+02 & ---    & 1.7224E+01 & ---    & 2.8812E-01 & --- \\
    4     & 6.4374E+01 & 1.69  & 5.5939E+00 & 1.62  & 6.7346E-02 & 2.10  \\
    8     & 1.8154E+01 & 1.83  & 1.0413E+00 & 2.43  & 5.3369E-03 & 3.66  \\
    12    & 8.3421E+00 & 1.92  & 3.9287E-01 & 2.40  & 1.1227E-03 & 3.84  \\
    16    & 4.7626E+00 & 1.95  & 1.9940E-01 & 2.36  & 3.6485E-04 & 3.91  \\
    20    & 3.0734E+00 & 1.96  & 1.1891E-01 & 2.32  & 1.5155E-04 & 3.94  \\
    24    & 2.1455E+00 & 1.97  & 7.8441E-02 & 2.28  & 7.3723E-05 & 3.95  \\
    28    & 1.5820E+00 & 1.98  & 5.5427E-02 & 2.25  & 4.0056E-05 & 3.96  \\
    32    & 1.2144E+00 & 1.98  & 4.1159E-02 & 2.23  & 2.3610E-05 & 3.96  \\
    \hline
    \end{tabular}%
  \label{ex_polyP3}%
\end{table}%

When $\theta =1$, on the triangular meshes, we state the results as shown in Table \ref{ex_triP2} and Table \ref{ex_triP3}. We choose the $P_2$ weak Galerkin finite element, set $j=k+3$, $P=1000$ and obtain the convergence rates in $H^2$ and $L^2$ norms are of order $O(h)$ and $O(h^2)$ in the Table \ref{ex_triP2}. Table \ref{ex_triP3} shows the convergence rates are $O(h^2)$ and $O(h^4)$ in $H^2$ and $L^2$ norms respectively when $k=3$, $j=k+4$ and $P=40000$. By $\theta =1$ and $j=k+6$, we get the results on polygonal meshes as shown in Table \ref{ex_polyP3} using $P=50000$. And the rates in above tables are consistent with the theoretical orders.

\begin{table}[htbp]
  \centering
  \caption{Error values and convergence rates for (\ref{ex}) on triangular meshes using $P_2$ and $\theta =\frac{1}{2}$}
    \begin{tabular}{c|c|c|c|c|c|c}
    \hline
    $n$ & $\trb{Q_hu -u _h} $ & Rate & $\norm{Q_h u-u_h}_{2,h}$& Rate & $\norm{Q_hu -u _h}$ & Rate \\
    \hline
    4     & 1.0411E+02 & ---    & 1.4853E+01 & ---    & 2.6906E-01 & --- \\
    8     & 5.6458E+01 & 0.88  & 5.8690E+00 & 1.34  & 9.8158E-02 & 1.45  \\
    16    & 2.8813E+01 & 0.97  & 1.9380E+00 & 1.60  & 2.7275E-02 & 1.85  \\
    32    & 1.4483E+01 & 0.99  & 7.2400E-01 & 1.42  & 7.0213E-03 & 1.96  \\
    64    & 7.2522E+00 & 1.00  & 3.2203E-01 & 1.17  & 1.7754E-03 & 1.98  \\
    128   & 3.6277E+00 & 1.00  & 1.5587E-01 & 1.05  & 4.5146E-04 & 1.98  \\
    \hline
    \end{tabular}%
  \label{ex_triP2_2}%
\end{table}%

\begin{table}[htbp]
  \centering
  \caption{Error values and convergence rates for (\ref{ex}) on triangular meshes using $P_3$ and $\theta =\frac{1}{2}$}
    \begin{tabular}{c|c|c|c|c|c|c}
    \hline
    $n$ & $\trb{Q_hu -u _h} $ & Rate & $\norm{Q_h u-u_h}_{2,h}$& Rate & $\norm{Q_hu -u _h}$ & Rate \\
    \hline
    2     & 1.5479E+02 & ---    & 2.0388E+01 & ---    & 3.2198E-01 & --- \\
    4     & 4.0274E+01 & 1.94  & 5.1257E+00 & 1.99  & 3.0504E-02 & 3.40  \\
    8     & 1.0578E+01 & 1.93  & 1.2560E+00 & 2.03  & 2.2763E-03 & 3.74  \\
    12    & 4.7431E+00 & 1.98  & 5.5063E-01 & 2.03  & 4.6935E-04 & 3.89  \\
    16    & 2.6762E+00 & 1.99  & 3.0745E-01 & 2.03  & 1.5102E-04 & 3.94  \\
    20    & 1.7152E+00 & 1.99  & 1.9596E-01 & 2.02  & 6.2420E-05 & 3.96  \\
    24    & 1.1920E+00 & 2.00  & 1.3575E-01 & 2.01  & 3.0310E-05 & 3.96  \\
    \hline
    \end{tabular}%
  \label{ex_triP3_2}%
\end{table}%

\begin{table}[htbp]
  \centering
  \caption{Error values and convergence rates for (\ref{ex}) on polygonal meshes using $P_3$ and $\theta =\frac{1}{2}$}
    \begin{tabular}{c|c|c|c|c|c|c}
      \hline
      $n$ & $\trb{Q_hu -u _h} $ & Rate & $\norm{Q_h u-u_h}_{2,h}$& Rate & $\norm{Q_hu -u _h}$ & Rate \\
      \hline
    2     & 2.0786E+02 & ---    & 1.7224E+01 & ---    & 2.8812E-01 & --- \\
    4     & 6.4374E+01 & 1.69  & 5.5939E+00 & 1.62  & 6.7346E-02 & 2.10  \\
    8     & 1.8154E+01 & 1.83  & 1.0413E+00 & 2.43  & 5.3369E-03 & 3.66  \\
    12    & 8.3421E+00 & 1.92  & 3.9287E-01 & 2.40  & 1.1227E-03 & 3.84  \\
    16    & 4.7626E+00 & 1.95  & 1.9940E-01 & 2.36  & 3.6485E-04 & 3.91  \\
    20    & 3.0734E+00 & 1.96  & 1.1891E-01 & 2.32  & 1.5155E-04 & 3.94  \\
    24    & 2.1455E+00 & 1.97  & 7.8441E-02 & 2.28  & 7.3723E-05 & 3.95  \\
    28    & 1.5820E+00 & 1.98  & 5.5427E-02 & 2.25  & 4.0056E-05 & 3.96  \\
    32    & 1.2144E+00 & 1.98  & 4.1159E-02 & 2.23  & 2.3610E-05 & 3.96  \\
    \hline
    \end{tabular}%
  \label{ex_polyP3_2}%
\end{table}%

Next we consider the Crank-Nicolson scheme. When $k=2$, $j=k+3$ and $P=1000$, by Table \ref{ex_triP2_2}, we can observe the convergence rates on triangular meshes are $O(h)$ in the $H^2$ norm and $O(h^2)$ in the $L^2$ norm. Table \ref{ex_triP3_2} shows that when $k=3$, $j=k+4$ and $P=40000$, in triangular meshes, the convergence rates are respectively $O(h^2)$ and $O(h^4)$ in $H^2$ and $L^2$ norms. In the case of polygonal meshes, when $k=3$, $j=k+6$ and $P=50000$, we can get the results as shown in Table \ref{ex_polyP3_2}. And the convergence rates in tables are coincident with our theoretical results.

\begin{table}[htbp]
  \centering
  \caption{Error values and convergence rates for (\ref{ex}) on triangular meshes using $n=128$ and $\theta =1$}
    \begin{tabular}{c|c|c|c|c|c|c}
    \hline
    $P$ & $\trb{Q_hu -u _h} $ & Rate & $\norm{Q_h u-u_h}_{2,h}$& Rate & $\norm{Q_hu -u _h}$ & Rate \\
    \hline
    4     & 3.4948E-02 & ---    & 3.4948E-02 & ---    & 8.8374E-04 & --- \\
    8     & 3.2711E-02 & 0.10  & 3.2711E-02 & 0.10  & 8.2688E-04 & 0.10  \\
    16    & 2.0027E-02 & 0.71  & 2.0026E-02 & 0.71  & 5.0633E-04 & 0.71  \\
    32    & 1.0725E-02 & 0.90  & 1.0724E-02 & 0.90  & 2.7115E-04 & 0.90  \\
    64    & 5.5148E-03 & 0.96  & 5.5126E-03 & 0.96  & 1.3942E-04 & 0.96  \\
    128   & 2.7799E-03 & 0.99  & 2.7750E-03 & 0.99  & 7.0114E-05 & 0.99  \\
    \hline
    \end{tabular}%
  \label{ex_tri_time1}%
\end{table}%

\begin{table}[htbp]
  \centering
  \caption{Error values and convergence rates for (\ref{ex}) on polygonal meshes using $n=128$ and $\theta =1$}
    \begin{tabular}{c|c|c|c|c|c|c}
    \hline
    $P$ & $\trb{Q_hu -u _h} $ & Rate & $\norm{Q_h u-u_h}_{2,h}$& Rate & $\norm{Q_hu -u _h}$ & Rate \\
    \hline
    4     & 3.4466E-02 & ---    & 3.4463E-02 & ---    & 8.7254E-04 & --- \\
    8     & 3.2215E-02 & 0.10  & 3.2213E-02 & 0.10  & 8.1528E-04 & 0.10  \\
    16    & 1.9487E-02 & 0.73  & 1.9482E-02 & 0.73  & 4.9325E-04 & 0.72  \\
    32    & 1.0247E-02 & 0.93  & 1.0236E-02 & 0.93  & 2.5965E-04 & 0.93  \\
    64    & 5.0331E-03 & 1.03  & 5.0105E-03 & 1.03  & 1.2699E-04 & 1.03  \\
    128   & 2.3845E-03 & 1.08  & 2.3356E-03 & 1.10  & 5.8254E-05 & 1.12  \\
    \hline
    \end{tabular}%
  \label{ex_poly_time1}%
\end{table}%

\begin{table}[htbp]
  \centering
  \caption{Error values and convergence rates for (\ref{ex}) on triangular meshes using $n=28$ and $\theta =\frac{1}{2}$}
    \begin{tabular}{c|c|c|c|c|c|c}
    \hline
    $P$ & $\trb{Q_hu -u _h} $ & Rate & $\norm{Q_h u-u_h}_{2,h}$& Rate & $\norm{Q_hu -u _h}$ & Rate \\
    \hline
    4     & 7.2503E-02 & ---    & 7.2503E-02 & ---    & 1.8328E-03 & --- \\
    8     & 6.7287E-03 & 3.43  & 6.7287E-03 & 3.43  & 1.7079E-04 & 3.42  \\
    16    & 8.9315E-04 & 2.91  & 8.9316E-04 & 2.91  & 2.2726E-05 & 2.91  \\
    32    & 1.9070E-04 & 2.23  & 1.9070E-04 & 2.23  & 4.8551E-06 & 2.23  \\
    64    & 4.5478E-05 & 2.07  & 4.5478E-05 & 2.07  & 1.1557E-06 & 2.07  \\
    128   & 1.1434E-05 & 1.99  & 1.1435E-05 & 1.99  & 2.8416E-07 & 2.02  \\
    \hline
    \end{tabular}%
  \label{ex_tri_time2}%
\end{table}%

\begin{table}[htbp]
  \centering
  \caption{Error values and convergence rates for (\ref{ex}) on polygonal meshes using $n=28$ and $\theta =\frac{1}{2}$}
    \begin{tabular}{c|c|c|c|c|c|c}
    \hline
    $P$ & $\trb{Q_hu -u _h} $ & Rate & $\norm{Q_h u-u_h}_{2,h}$& Rate & $\norm{Q_hu -u _h}$ & Rate \\
    \hline
    4     & 7.2504E-02 & ---    & 7.2504E-02 & ---    & 1.8328E-03 & --- \\
    8     & 6.7277E-03 & 3.43  & 6.7277E-03 & 3.43  & 1.7075E-04 & 3.42  \\
    16    & 8.8912E-04 & 2.92  & 8.8912E-04 & 2.92  & 2.2604E-05 & 2.92  \\
    32    & 1.8469E-04 & 2.27  & 1.8469E-04 & 2.27  & 4.6670E-06 & 2.28  \\
    64    & 3.9010E-05 & 2.24  & 3.9008E-05 & 2.24  & 9.3371E-07 & 2.32  \\
    128   & 7.9670E-06 & 2.29  & 7.9673E-06 & 2.29  & 8.6452E-08 & 3.43  \\
    \hline
    \end{tabular}%
  \label{ex_poly_time2}%
\end{table}%

Finally, we verify the convergence rates about the time variable. When $\theta =1$, with $k=4$ and the space partition $n=128$, we get the order $O(h)$ whether it's triangulation using $j=k+3$ or polygon using $j=k+6$ from the Table \ref{ex_tri_time1} and Table \ref{ex_poly_time1}. Likewise, we test the case when $\theta =\frac{1}{2}$, $k=4$, $n=28$ and obtain the convergence rate $O(h^2)$ which are shown in Table \ref{ex_tri_time2} and Table \ref{ex_poly_time2}. The convergence rates coincide with the Theorem \ref{full-est}.

\section{Concluding remarks and ongoing work}
In this paper, we propose the semi-discrete numerical scheme and full-discrete numerical scheme for the
fourth order parabolic problem by the SFWG method and the implicit $\theta$-schemes where $\frac{1}{2}\leq\theta\leq 1$. Based on the schemes, we analyze the well-posedness and the convergence of the errors in $H^2$ and $L^2$ norms. Finally, we use numerical results to confirm the accuracy of the theoretical results.

In the future work, we are going to study other WG corresponding methods to deal with the fourth order parabolic equation and apply the WG methods combining the implicit $\theta$-schemes to solve other time-dependent partial differential equations.

\bibliographystyle{siam}
\bibliography{library}

\end{document}